\newcommand\CC{{\mathbb C}}

\newcommand\cC{{\cal C}}
\newcommand\cD{{\cal D}}

\newcommand\cG{{\cal G}}
\newcommand\cH{{\cal H}}
\newcommand\cI{{\cal I}}                               

\newcommand\cK{{\cal K}}
\newcommand\cL{{\cal L}}
\newcommand\cM{{\cal M}}
\newcommand\cN{{\cal N}}

\newcommand\cO{{\cal O}}              
\newcommand\cP{{\cal P}}  
\newcommand\cQ{{\cal Q}}
\newcommand\cR{{\cal R}}
\newcommand\cS{{\cal S}}

\newcommand\cU{{\cal U}}
\newcommand\cV{{\cal V}} 

\newcommand\cX{{\cal X}} 

\newcommand\cZ{{\cal Z}}

\newcommand\DD{{\mathbb D}}
\newcommand\dra{\dashrightarrow}
\newcommand\es{\emptyset}

\newcommand\GG{{\mathbb G}}

\newcommand\gM{\mathfrak{M}}
\newcommand\gN{\mathfrak{N}}

\newcommand\gp{\mathfrak{p}}

\newcommand\gT{\mathfrak{T}}

\newcommand\hra{\hookrightarrow}
\newcommand\la{\langle}

\newcommand\lagr{\mathbb{LG}(\wedge^3 V)}

\newcommand\lra{\longrightarrow}
\newcommand\n{\noindent}

\newcommand\ov{\overline}
\newcommand\PP{{\mathbb P}}
\newcommand\QQ{{\mathbb Q}}
\newcommand\ra{\rangle}
\newcommand\RR{{\mathbb R}}

\newcommand\ul{\underline}

\newcommand\wh{\widehat}
\newcommand\wt{\widetilde}
\newcommand\ZZ{{\mathbb Z}}
\newcommand{\mapor}[1]{{\stackrel{#1}{\longrightarrow}}}
\newcommand{\mapver}[1]{\Big\downarrow
\vcenter{\rlap{$\scriptstyle#1$}}}
\documentclass[10pt,a4paper]{article}
\usepackage{amsthm}
\usepackage{amsmath}
\usepackage{amssymb}
\usepackage{makeidx}         
\usepackage{graphicx}        
\usepackage{multicol}        
\usepackage[all,ps]{xy}
\bibliographystyle{amsplain}


\DeclareMathOperator{\alb}{alb}

\DeclareMathOperator{\cod}{cod}

\DeclareMathOperator{\coker}{coker}

\DeclareMathOperator{\Pic}{Pic}

\DeclareMathOperator{\Spec}{Spec}

\DeclareMathOperator{\vol}{vol}


\usepackage{ifthen}
\newcommand{\cit}[1]{{\rm \textbf{#1}}}
\newcommand{\Ref}[2]{\cit{%
\ifthenelse{\equal{#1}{thm}}{Theorem}{}%
\ifthenelse{\equal{#1}{ass}}{Assumption}{}%
\ifthenelse{\equal{#1}{prp}}{Proposition}{}%
\ifthenelse{\equal{#1}{lmm}}{Lemma}{}%
\ifthenelse{\equal{#1}{crl}}{Corollary}{}%
\ifthenelse{\equal{#1}{cnj}}{Conjecture}{}%
\ifthenelse{\equal{#1}{dfn}}{Definition}{}%
\ifthenelse{\equal{#1}{expl}}{Example}{}%
\ifthenelse{\equal{#1}{hyp}}{Hypothesis}{}%
\ifthenelse{\equal{#1}{rmk}}{Remark}{}%
\ifthenelse{\equal{#1}{clm}}{Claim}{}%
\ifthenelse{\equal{#1}{exe}}{Exercise}{}%
\ifthenelse{\equal{#1}{qst}}{Question}{}%
\ifthenelse{\equal{#1}{sec}}{Section}{}%
\ifthenelse{\equal{#1}{subsec}}{Subsection}{}%
\ifthenelse{\equal{#1}{univ}}{Universal Property}{}%
\ifthenelse{\equal{#1}{trm}}{Terminology}{}%
\  \ref{#1:#2}%
}}


\theoremstyle{plain}

\newtheorem{thm}{Theorem}[section]

\newtheorem{clm}[thm]{Claim}

\newtheorem{crl}[thm]{Corollary}

\newtheorem{lmm}[thm]{Lemma}
\newtheorem{prp}[thm]{Proposition}
\newtheorem{prp-dfn}[thm]{Proposition-Definition}

\theoremstyle{definition}

\newtheorem{ass}[thm]{Assumption}
\newtheorem{dfn}[thm]{Definition}

\theoremstyle{remark}

\newtheorem{cnj}[thm]{Conjecture}

\newtheorem{qst}[thm]{Question}
\newtheorem{rmk}[thm]{Remark}

\setcounter{section}{-1}
 \makeindex
 \begin{document}
 \title{Higher-dimensional analogues of $K3$ surfaces}
 \author{Kieran G. O'Grady\thanks{Supported by
 Cofinanziamento M.U.R. 2008-2009}\\\\
\lq\lq Sapienza\rq\rq Universit\`a di Roma}
\date{May 17 2010}
 \maketitle
  \tableofcontents
 \section{Introduction}\label{prologo}
 \setcounter{equation}{0}
  $K3$ surfaces were known classically as complex smooth projective surfaces whose generic hyperplane section is a  canonically embedded curve;  an example is provided by a smooth quartic surface in $\PP^3$. One naturally encounters $K3$'s in the Enriques-Kodaira classification of compact complex surfaces: they are defined to be compact K\"ahler surfaces with trivial canonical bundle and vanishing first Betti number. Below we list a few among the wonderful properties  of these surfaces:
\begin{itemize}
\item[(1)]
(Kodaira~\cite{kod}): Any two $K3$ surfaces are deformation equivalent - thus they are all deformations of a quartic surface.
\item[(2)]
The K\"ahler cone of a $K3$ surface $X$ is described as follows. Let $\omega\in H^{1,1}_{\RR}(X)$ be one K\"ahler class and  $\cN_X$ be the set of nodal classes 
\begin{equation}
\cN_X :=\{\alpha\in H^{1,1}_{\ZZ}(X)\mid \alpha\cdot\alpha=-2,\quad
\alpha\cdot\omega >0 \}
\end{equation}
The K\"ahler cone $\cK_X$ is given by
\begin{equation}
\cK_X :=\{\alpha\in H^{1,1}_{\RR}(X)\mid  \alpha\cdot\alpha>0,\quad
\alpha\cdot\beta >0\quad \forall \beta\in\cN_X \}.
\end{equation}
\item[(3)]
(Shafarevich \& Piatechki - Shapiro~\cite{piatsha}, Burns \& Rapoport~\cite{bura}, Looijenga \& Peters~\cite{looi}): Weak and strong Global Torelli hold. The weak version states that two $K3$ surfaces $X,Y$ are isomorphic if and only if there exists an integral isomorphism of Hodge structures $f\colon H^2(X)\overset{\sim}{\lra} H^2(Y)$ which is an isometry (with respect to  the intersection forms), the strong version states that $f$ is induced by an isomorphism $\phi\colon Y\overset{\sim}{\lra} X$ if and only if it maps effective divisors to effective divisors\footnote{Effective divisors have a purely Hodge-theoretic description once we have located one K\"ahler class.}. 
\end{itemize}
 The higher-dimensional  compact  K\"ahler  manifolds closest to $K3$ surfaces are {\it hyperk\"ahler manifolds} (HK); they are defined to be  simply connected   with $H^{2,0}$ spanned by the class of a holomorphic \ul{symplectic} form. The terminology originates from riemannian geometry: Yau's solution of Calabi's conjecture  gives that every  K\"ahler class $\omega$ on a HK manifold contains a K\"ahler metric $g$ with holonomy the compact symplectic group. There is a sphere $S^2$ (the  pure  quaternions of norm $1$) parametrizing complex structures for which $g$ is a K\"ahler metric - the {\it twistor family} associated to $g$; it plays a key role in the general theory of HK manifolds\footnote{Hyperk\"ahler manifolds are also known as {\it irreducible symplectic}}.
 Notice that a HK manifold  has trivial canonical bundle and is of even dimension. An example of Beauville~\cite{beau}  of dimension $2n$: the Douady space $S^{[n]}$ parametrizing length-$n$ analytic subsets of a $K3$ surface $S$. (Of course $S^{[n]}$ is a Hilbert scheme if $S$ is projective.)
  We mention right away two results which suggest that HK manifolds might behave like $K3$'s. Let $X$ be HK: 
 \begin{itemize}
\item[(a)]
By a theorem of Bogomolov~\cite{bog1} deformations of $X$ are unobstructed\footnote{The obstruction space $H^2(T_X)$ might be non-zero, e.g.~if $X$ is a generalized Kummer, see Subsection~\ref{esembeau}.} i.e.~the deformation space $Def(X)$ is smooth of the expected dimension $H^1(T_X)$.
\item[(b)]
Since the sheaf map $T_X\to\Omega^1_X$ given by contraction with a holomorphic symplectic form is an isomorphism it follows that the differential of the weight-$2$ period map 
 \begin{equation}\label{derper}
H^1(T_X)\lra \text{Hom}(H^{2,0}(X), H^{1,1}(X)) 
\end{equation}
is injective i.e.~infinitesimal Torelli holds.
\end{itemize}
We notice that by~(a) the generic deformation of $X$ has $h^{1,1}_{\ZZ}=0$ - in particular it is not projective. In fact given  $\alpha\in H^{1}(\Omega^1_X)$ and a first order deformation $\kappa\in H^1(T_X)$ we know by Griffiths that  $\alpha$ remains of type $(1,1)$ to first order in the direction $\kappa$ if and only if $Tr(\kappa\cup\alpha) =0$, moreover  the map
 \begin{equation}\label{ostia}
\begin{matrix}
H^1(T_X) & \lra & H^{2}(\cO_X) \\
\kappa & \mapsto & Tr(\kappa\cup\alpha) 
\end{matrix}
\end{equation}
is surjective if $\alpha\not=0$ by Serre duality. Item~(b) i.e.~Infinitesimal Torelli suggests that the weight-$2$ Hodge structure of $X$ might capture much of the geometry of $X$. 

We will review some of the   known results regarding higher-dimensional HK's and then we will present a program which aims to prove that numerical $K3^{[2]}$'s behave very much like $K3$'s at least as far as Items~(1)-(2) and~(3) above are concerned - a HK $4$-fold $X$ is a {\it numerical
 $(K3)^{[2]}$} if there exists
 an isomorphism of abelian groups $\psi\colon
 H^2(X;\ZZ)\overset{\sim}{\lra}H^2(S^{[2]};\ZZ)$ where $S$ is a $K3$ 
 such that
\begin{equation}
 \int_{X}\alpha^{4}=\int_{S^{[2]}}\psi(\alpha)^{4}\qquad \forall 
 \alpha\in H^2(X;\ZZ). 
\end{equation}
  In the last section we will discuss Global Torelli for deformations of $K3^{[2]}$.
 \section{Examples}
 \setcounter{equation}{0}
The surprising topological properties of HK manifolds (see Subsection~\ref{topofthepops}) led Bogomolov~\cite{bog1} to state erroneously that no higher-dimensional (i.e.~of $\dim>2$) HK exists. Some time later Fujiki~\cite{fujiex}
realized that $K3^{[2]}$ is a higher-dimensional HK manifold\footnote{Fujiki described $K3^{[2]}$ not as a Douady space but as the blow-up of the diagonal in the symmetric square of a $K3$ surface}. Soon after that Beauville~\cite{beau} showed that $K3^{[n]}$ is a HK manifold and constructed another deformation class of HK manifolds 
in arbitrary even dimension greater that $2$ namely deformations of generalized Kummers.  We exhibited~\cite{ogprimo,ogsecondo} two \lq\lq sporadic\rq\rq deformation classes, one in dimension $6$ the other in dimension $10$. No other deformation classes are known other than those mentioned above.  
 \subsection{Beauville}\label{esembeau}
\setcounter{equation}{0}
Besides $(K3)^{[n]}$ Beauville discovered another class of $2n$-dimensional HK manifolds - generalized Kummers associated to a $2$-dimensional compact complex torus. Before defining generalized Kummers we recall that the Douady space $W^{[n]}$  comes with a  cycle (Hilbert-Chow) map
 \begin{equation}\label{hilbchow}
\begin{matrix}
W^{[n]} & \overset{\kappa_n}{\lra} & W^{(n)} \\
[Z] & \mapsto & \sum_{p\in W}\ell(\cO_{Z,p})p
\end{matrix}
\end{equation}
where $W^{(n)}$ is the symmetric product of $W$.
 Now suppose that $T$ is a $2$-dimensional compact complex torus. We have the summation map $\sigma_n\colon W^{(n)} \to  W$. Composing the two above maps (with $(n+1)$ replacing $n$) we get a locally (in the classical topology)  trivial fibration $\sigma_{n+1}\circ\kappa_{n+1}\colon W^{[n+1]}\to W$. The $2n$-dimensional {\it generalized Kummer} associated to $T$ is
 \begin{equation}\label{eccokumm}
K^{[n]}T:=(\sigma_{n+1}\circ\kappa_{n+1})^{-1}(0).
\end{equation}
The name is justified by the observation that if $n=1$ then $K^{[1]}T$ is the Kummer surface associated to $T$ (and hence a $K3$). Beauville~\cite{beau} proved that $K^{[n]}(T)$ is a HK manifold. Moreover  if $n\ge 2$ then
\begin{equation}
  b_2((K3)^{[n]})=23\qquad b_2(K^{[n]}T)=7.
\end{equation}
 In particular $(K3)^{[n]}$ and $K^{[n]}T$ are not deformation equivalent as soon as $n\ge 2$. The second cohomology of these manifolds is described as follows. Let $W$ be a compact complex surface. There is a \lq\lq symmetrization map\rq\rq
\begin{equation}
\mu_n\colon H^2(W;\ZZ)  \lra  H^2(W^{(n)};\ZZ) 
\end{equation}
characterized by the following property. Let $\rho_n\colon W^n\to W^{(n)}$ be the quotient map and $\pi_i\colon W^n\to W$ be the $i$-th projection: then
\begin{equation}\label{discesa}
\rho_n^{*}\circ\mu_n(\alpha)=\sum_{i=1}^n\pi_i^{*}\alpha,\qquad \alpha\in H^2(W;\ZZ).
\end{equation}
Composing with $\kappa_n^{*}$ and extending scalars one gets an injection of integral Hodge structures
\begin{equation}\label{simcom}
\wt{\mu}_n:=\kappa_n^{*}\circ\mu_n
\colon H^2(W;\CC)  \lra  H^2(W^{[n]};\CC). 
\end{equation}
The above map is not surjective unless $n=1$; we are missing the Poincar\'e dual of the exceptional set of $\kappa_n$ i.e.
\begin{equation}
\Delta_n:=\{[Z]\in W^{[n]}\mid \text{$Z$ is non-reduced}\}.
\end{equation}
It is known that $\Delta_n$ is a prime divisor and that it is divisible\footnote{If $n=2$ Equation~\eqref{doppio} follows from existence of the double cover $Bl_{diag}(S^2)\to S^{[2]}$ ramified over $\Delta_2$.} by $2$ in $\Pic (W^{[n]})$:
\begin{equation}\label{doppio}
\cO_{W^{[n]}}(\Delta_n)\cong L_n^{\otimes 2},\quad L_n\in \Pic (W^{[n]}).
\end{equation}
Let $\xi_n:=c_1(L_n)$; one has
\begin{equation}\label{comhilb}
H^2(W^{[n]};\ZZ)=\wt{\mu}_n H^2(W;\ZZ)\oplus \ZZ\xi_n,\quad\text{if $H_1(W)=0$.} 
\end{equation}
That describes $H^2((K3)^{[n]})$. Beauville proved that an analogous result holds for generalized Kummers, namely we have an isomorphism
\begin{equation}
\begin{matrix}
H^2(T;\ZZ)\oplus\ZZ & \overset{\sim}{\lra} & H^2(K^{[n]}T;\ZZ) \\
(\alpha,k) & \mapsto & (\wt{\mu}_{n+1}(\alpha)+k\xi_{n+1})|_{K^{[n]}T}
\end{matrix}
\end{equation}
 The above description of the $H^2$  gives the following interesting result: if $n\ge 2$ the  generic deformation of $S^{[n]}$ where $S$ is a $K3$  is not isomorphic to $T^{[n]}$ for some other $K3$ surface $T$.
In fact every deformation of $S^{[n]}$ obtained by deforming $S$ keeps $\xi_n$ of type $(1,1)$ while as  noticed previously   the generic deformation of a HK manifold has no non-trivial integral $(1,1)$-classes.  (Notice that if  $S$ is a surface of general type then every deformation of $S^{[n]}$ is indeed obtained by deforming $S$, see~\cite{fan}.)
 \subsection{Mukai and beyond}
\setcounter{equation}{0}
 Mukai~\cite{muksympl,mukaivb,mukaisug} and Tyurin~\cite{tyu} analyzed moduli spaces of semistable sheaves on projective $K3$'s and abelian surfaces and obtained other examples of HK manifolds. Let $S$ be a projective $K3$ and $\cM$ the moduli space of $\cO_S(1)$-semistable sheaves on $S$ with assigned Chern character - by Gieseker and Maruyama $\cM$ has a natural structure of projective scheme. A non-zero canonical form  
   on $S$ induces a holomorphic symplectic $2$-form on the open $\cM^s\subset\cM$ parametrizing stable sheaves (notice that $\cM^s$ is smooth by Mukai~\cite{muksympl}). 
   If $\cM^s=\cM$ then $\cM$ is a HK variety\footnote{A HK variety is a projective HK manifold.}, in general it is not isomorphic (nor birational) to $(K3)^{[n]}$ however it can be  deformed to   $(K3)^{[n]}$ (here $2n=\dim\cM$), see~\cite{go-huy,ogvb,yoshi1}. Notice that $S^{[n]}$ may be viewed as a particular case of Mukai's construction by identifying it with the moduli space of rank-$1$ semistable sheaves on $S$ with $c_1=0$ and $c_2=n$. Notice also that these moduli spaces give explicit deformations of $(K3)^{[n]}$ which are not $(K3)^{[n]}$. 
   Similarly one may consider moduli spaces of semistable sheaves on an abelian  surface $A$: in the case when $\cM=\cM^s$ one gets deformations of the generalized Kummer. To be precise it is not $\cM$ which is a deformation of a generalized Kummer but rather one of its Beauville-Bogomolov factors. Explicitely we consider the map
   \begin{equation}\label{fattore}
\begin{matrix}
\cM(A) & \overset{\mathfrak a}{\lra} & A\times \wh{A}\\
[F] & \mapsto & (\alb (c_2(F)-c_2(F_0)),[\det F\otimes (\det F_0)^{-1}])
\end{matrix}
\end{equation}
where $[F_0]\in\cM$ is a \lq\lq reference\rq\rq point and $\alb \colon CH_0(A)\to A$ is the Albanese map. Then $\mathfrak a$ is a locally (classical topology) trivial fibration; Yoshioka~\cite{yoshi2} proved that the fibers of ${\mathfrak a}$ are deformations of a generalized Kummer. What can we say about moduli spaces such that 
   $\cM\not=\cM^s$ ?  The locus $(\cM\setminus\cM^s)$  parametrizing $S$-equivalence classes of semistable non-stable sheaves is the singular locus of $\cM$ except for pathological choices of Chern character which do not give anything particularly interesting; thus we assume that $(\cM\setminus\cM^s)$ is the singular locus of $\cM$. A natural question is the following: does there exist a crepant desingularization $\wt{\cM}\to\cM$ ?  We constructed such a desingularization~\cite{ogprimo,ogsecondo} (see also~\cite{chma}) for the moduli space $\cM_4(S)$ 
 of semi-stable rank-$2$ sheaves on a $K3$ surface $S$ with $c_1=0$ and $c_2=4$ and  for the moduli space $\cM_2(A)$ of semi-stable sheaves on an abelian surface $A$ with $c_1=0$ and $c_2=2$; the singularities of the moduli spaces are the same in both cases and both moduli spaces have dimension $10$. Let $M_{10}$ be our desingularization of $\cM_4(S)$ where $S$ is a $K3$. Since the resolution is crepant Mukai's holomorphic symplectic form on $(\cM(S)\setminus\cM(S)^s)$ extends to a holomorphic symplectic form on $M_{10}$. We proved~\cite{ogprimo} that $M_{10}$ is HK i.e.~it is simply connected and $h^{2,0}(M_{10})=1$. Moreover $M_{10}$ is not a deformation of one of Beauville's examples because $b_2(M_{10})=24$. (We proved that $b_2(M_{10})\ge 24$ later Rapagnetta~\cite{rap2} proved that equality holds.) Next let $A$ be an abelian surface and $\wt{\cM}_2(A)\to \cM_2(A)$ be our desingularization. 
   Composing Map~\eqref{fattore} for $\cM(A)=\cM_2(A)$ with the desingularization  map we get a  locally (in the classical topology) trivial fibration $
\wt{\mathfrak a}\colon \wt{\cM}_2(A)\to A\times \wh{A}$; let $M_6$ be any fiber of $\wt{\mathfrak a}$. We proved~\cite{ogprimo} that $M_6$ is  HK and that $b_2(M_6)=8$; thus $M_6$  is not a deformation of one of Beauville's examples. We would like to point out that while all Betti and Hodge numbers of Beauville's examples are known~\cite{gottsche1} the same is not true of our examples (Rapagnetta~\cite{rap1} computed the Euler characteristic of $M_6$). Of course there are examples of moduli spaces $\cM$ with $\cM\not=\cM^s$ in any even dimension; one would like to  desingularize them  and  produce many more deformation classes of HK manifolds. Kaledin-Lehn-Sorger~\cite{kls} have proved that in most cases there is no crepant desingularization and that if there is one then it is a deformation of $M_{10}$ if the surface is a $K3$ while in the case of an  abelian  surface the fibers of Map~\eqref{fattore} composed with the desingularization map are deformations of $M_6$\footnote{To be precise their result holds if the polarization of the surface is \lq\lq generic\rq\rq relative to the chosen Chern character, with this hypothesis  the singular locus of $\cM$ is, so to speak, as small as possible}. In fact all known examples of HK manifolds are deformations either of Beauville's examples or of ours.     
 \subsection{Mukai flops}\label{chirurgia}
\setcounter{equation}{0}
Let $X$ be a HK manifold of dimension $2n$ containing a submanifold $Z$ isomorphic to $\PP^n$. The {\it Mukai flop of $Z$} (introduced in~\cite{muksympl})  is a bimeromorphic map $X\dashrightarrow X^{\vee}$ which is an isomorhism away from $Z$ and replaces $Z$ by the dual plane $Z^{\vee}:=(\PP^n)^{\vee}$. Explicitly  let $\tau\colon\wt{X}\to X$ be the blow-up of $Z$ and $E\subset \wt{X}$ be the exceptional divisor. Since $Z$ is Lagrangian the symplectic form on $X$ defines an isomorphism $N_{Z/X}\cong \Omega_Z=\Omega_{\PP^n}$. 
Thus we have
\begin{equation}
E\cong\PP(N_{Z/X})=\PP(\Omega_{\PP^n})\subset\PP^n\times(\PP^n)^{\vee}.
\end{equation}
Hence $E$ is a $\PP^{n-1}$-fibration in two different ways: we have $\pi\colon E\to\PP^{n}$ i.e.~the restriction of $\tau$ to $E$  
 and $\rho\colon E\to(\PP^{n})^{\vee}$. A straightforward computation shows that the restriction of $N_{E/\wt{X}}$ to a fiber of $\rho$ is $\cO_{\PP^{n-1}}(-1)$. By the Fujiki-Nakano contractibility criterion there exists a proper map $\tau^{\vee}\colon \wt{X}\to X^{\vee}$ to a complex manifold $X^{\vee}$ which is an isomorphism outside $E$ and which restricts to $\rho$ on $E$. Clearly $\tau^{\vee}(E)$ is naturally identified with $Z^{\vee}$ and we have a bimeromorphic map $X\dashrightarrow X^{\vee}$ which defines an isomorphism $(X\setminus Z)\overset{\sim}{\lra} (X^{\vee}\setminus Z^{\vee})$. 
Summarizing: we have  the following commutative diagram
\begin{equation}\label{contraggo}
\xymatrix{ 
 & \wh{X} \ar_{\tau}[dl]\ar^{\tau^{\vee}}[dr] & \\
X \ar_{c}[dr]   & \dra & X^{\vee} \ar^{c^{\vee}}[dl] \\   
& W & \\}
\end{equation}
where $c\colon X\to W$ and $c^{\vee}\colon X^{\vee}\to W$ are the contractions of $Z$ and $Z^{\vee}$ respectively - see the Introduction of~\cite{wiwi}. 
 It follows that $X^{\vee}$ is simply connected and a holomorphic symplectic form on $X$ gives a holomorphic symplectic form on $X^{\vee}$ spanning $H^0(\Omega^2_{X^{\vee}})$; thus $X^{\vee}$ is HK if it is K\"ahler. We give an example with $X$ and $X^{\vee}$ projective. Let $f\colon S\to\PP^2$ be a double cover branched over a smooth sextic and $\cO_{S}(1):=f^{*}\cO_{\PP^2}(1)$: thus $S$ is a $K3$ of degree $2$. Let $X:=S^{[2]}$ and $\cM$ be the moduli space of pure $1$-dimensional $\cO_S(1)$-semistable sheaves on $S$  with typical member $\iota_{*}\cL$ where $\iota\colon C\hra S$ is the inclusion of $C\in |\cO_S(1)|$ and $\cL$ is a line-bundle on $C$ of degree $2$. We have a natural rational map 
 \begin{equation}\label{noniso}
\phi\colon S^{[2]}\dashrightarrow \cM
\end{equation}
 which associates to $[W]\in S^{[2]}$ the sheaf  $\iota_{*}\cL$ where $C$ is the unique curve containing $W$ (unicity requires $W$ to be generic !) and $\cL:=\cO_C(W)$. If every divisor in $ |\cO_S(1)|$ is prime (i.e.~the branch curve of $f$ has no tritangents) then $\cM$ is smooth (projective) and the rational map $\phi$ is identified with the flop of  
 \begin{equation}
Z:=\{f^{-1}(p)\mid p\in\PP^2\}.
\end{equation}
Wierzba and Wi\'sniewsky~\cite{wiwi} have proved that any birational map between HK four-folds is a composition of Mukai flops.
In higher dimensions Mukai~\cite{muksympl}  defined more general flops in which the indeterminacy locus is a fibration in projective spaces. Markman~\cite{mark0} constructed {\it stratified Mukai flops}. 
 \section{General theory}
\setcounter{equation}{0}
 It is fair to state that there are three main ingredients in the general theory of HK manifolds developed by Bogomolov, Beauville, Fujiki, Huybrechts and others:
 \begin{itemize}
\item[(1)]
Deformations are unobstructed (Bogomolov's Theorem).
\item[(2)]
The canonical Bogomolov-Beauville quadratic form on $H^2$ of a HK manifold (see the next subsection).
\item[(3)]
Existence of the twistor family on a HK manifold equipped with a K\"ahler class: this is a  consequence of Yau's solution of Calabi's conjecture.
\end{itemize}
 \subsection{Topology}\label{topofthepops}
\setcounter{equation}{0}
 Let $X$ be a HK-manifold of dimension $2n$. Beauville~\cite{beau} and Fujiki~\cite{fuji} proved that there exist an integral indivisible quadratic form 
 \begin{equation}
 q_X\colon H^2(X)\to\CC
\end{equation}
 (cohomology is with complex coefficients) and $c_X\in\QQ_{+}$ such that
 \begin{equation}\label{belleq}
\int_X\alpha^{2n}=c_X\frac{(2n)!}{n! 2^n}q_X(\alpha)^n,\qquad \alpha\in H^2(X).
\end{equation}
The above equation determines $c_X$ and $q_X$ with no ambiguity unless $n$ is even. If $n$ is even then $q_X$ is determined up to $\pm 1$: one singles out one of the two choices by imposing the inequality 
 \begin{equation}\label{bellin}
\text{$q_X(\omega)>0$ for $\omega\in H^{1,1}_{\RR}(X)$  a K\"ahler class.}
\end{equation}
(Notice that if $n$ is odd the above inequality follows from~\eqref{belleq}.) The {\it Beauville-Bogomolov} form and the {\it Fujiki constant} of $X$  are $q_X$ and $c_X$  respectively. We notice that~\eqref{belleq} is equivalent (by polarization) to 
\begin{equation}\label{polarizzo}
\int_X\alpha_1\wedge\ldots\wedge\alpha_{2n}=c_X
\sum_{\sigma\in\cR_{2n}} (\alpha_{\sigma(1)},\alpha_{\sigma(2)})_X\cdot
(\alpha_{\sigma(3)},\alpha_{\sigma(4)})_X\cdots
(\alpha_{\sigma(2n-1)},\alpha_{\sigma(2n)})_X
\end{equation}
where $(\cdot,\cdot)_X$ is the symmetric bilinear form associated to $q_X$ and $\cR_{2n}$ is a set of representatives for the left cosets of the subgroup $\cG_{2n}<\cS_{2n}$ of  permutations  of $\{1,\ldots,2n\}$  generated by transpositions $(2i-1,2i)$ and by products of transpositions $(2i-1,2j-1)(2i,2j)$ - in other words in the right-hand side of~\eqref{polarizzo} we avoid repeating addends which are equal\footnote{In defining $c_X$ we have introduced a normalization which is not standard in order to avoid a combinatorial factor in Equation~\eqref{polarizzo}.}. The existence of $q_X, c_X$ is by no means trivial; we sketch a proof. Let $f\colon\cX\to T$ be a deformation of $X$ representing $Def(X)$; more precisely letting $X_t:=f^{-1}\{t\}$ for $t\in T$, we are given $0\in T$, an isomorphism $X_0\overset{\sim}{\lra} X$ and the induced map of germs $(T,0)\to Def(X)$ is an isomorphism. In particular $T$ is smooth in $0$ and hence we may assume that it is a polydisk. The Gauss-Manin connection defines    
an integral isomorphism $ \phi_t\colon H^2(X)\overset{\sim}{\lra} H^2(X_t).$
The {\it local period map} of $X$ is given by
\begin{equation}\label{mappaperiodi}
  \begin{matrix}
 T & \overset{\pi}{\lra} & \PP(H^2(X)) \\
 t & \mapsto & \phi_t^{-1}H^{2,0}(X_t)
  \end{matrix}
\end{equation}
By infintesimal Torelli, see~\eqref{derper} $Im\pi$ is an analytic hypersurface in an open (classical topology) neighborhood of $\pi(0)$ and hence its Zariski closure $V=\ov{Im\pi}$ is either all of $\PP(H^2(X))$ or a hypersurface. One shows that the latter holds by considering the (non-zero) degree-$2n$ homogeneous polynomial
\begin{equation}\label{forminter}
  \begin{matrix}
 H^2(X) & \overset{G}{\lra} & \CC \\
 \alpha & \mapsto & \int_X\alpha^{2n}
  \end{matrix}
\end{equation}
 In fact if $\sigma_t\in H^{2,0}(X_t)$ then 
 \begin{equation}\label{prodiere}
 \int_{X_t}\sigma_t^{2n}=0
\end{equation}
 by type consideration and it follows by Gauss-Manin parallel transport that $G$ vanishes on $V$. 
 Thus $I(V)=(F)$ where $F$ is an irreducible homogeneous polynomial. By considering the derivative of the period map~\eqref{derper} one checks easily that $V$ is not a hyperplane and hence $\deg F\ge 2$. 
 On the other hand type consideration gives something stronger than~\eqref{prodiere}, namely 
\begin{equation}\label{tipozero}
  \int_{X_t}\sigma_t^{n+1}\wedge\alpha_1\cdots\wedge\alpha_{n-1}=0\qquad
\alpha_1,\ldots,\alpha_{n-1} \in H^2(X_t). 
\end{equation}
It follows that all the derivatives of $G$ up to order $(n-1)$ included vanish on $V$.  Since $\deg G=2n$ and $\deg F\ge 2$ it follows that $G=c\cdot F^n$ and $\deg F=2$. By integrality of $G$ there exists $\lambda\in\CC^{*}$ such that $c_X:=\lambda c$ is rational positive, $q_X:=\lambda\cdot F$ is integral indivisible and~\eqref{belleq} is satisfied.

\vskip 2mm
\n
 Of course if $X$ is a $K3$ then $q_X$ is the intersection form of $X$ (and $c_X=1$). In general $q_X$ gives $H^2(X;\ZZ)$ a structure of lattice just as in the well-known case of $K3$ surfaces. Suppose that  $X$ and $Y$ are deformation equivalent HK-manifolds: it follows from~\eqref{belleq} that $c_X=c_Y$ and the lattices $H^2(X;\ZZ),H^2(Y;\ZZ)$ are isometric (see the comment following~\eqref{belleq} if $n$ is even). Consider the case when $X=(K3)^{[n]}$; then $\wt{\mu}_n$ is an isometry, $\xi_n\bot Im\wt{\mu}_n$ and $q_X(\xi_n)=-2(n-1)$ i.e.
\begin{equation}\label{rethilb}
H^2(S^{[n]};\ZZ)\cong  U^3\widehat{\oplus}E_8\la-1\ra^2
 \widehat{\oplus}\la -2(n-1)\ra
\end{equation}
where $\widehat{\oplus}$ denotes othogonal direct-sum, $U$ is the hyperbolic plane and   $E_8\la-1\ra$ is the unique rank-$8$ negative definite unimodular even lattice. Moreover the Fujiki constant is 
\begin{equation}\label{fujimori}
c_{S^{[n]}}=1.
\end{equation}
In~\cite{rap2} the reader will find the B-B quadratic form and Fujiki constant of the  other known deformation classes of HK manifolds. 
\begin{rmk}\label{rmk:quix}
 Let $X$ be a HK manifold of dimension $2n$ and $\omega\in H^{1,1}_{\RR}(X)$ be a K\"ahler class.
 \begin{itemize}
\item[(1)]
Equation~\eqref{belleq} gives that with respect to $(,)_X$ we have
 \begin{equation}\label{tipoqu}
\text{$H^{p,q}(X)\bot H^{p',q'}(X)$ unless $(p',q')=(2-p,2-q)$.}
\end{equation}
\item[(2)]
 $q_X(\omega)>0$. In fact let $\sigma$ be  generator of $H^{2,0}(X)$; by Equation~\eqref{polarizzo} and Item~(1) above we have
\begin{equation}
0<\int_X \sigma^{n-1}\wedge\ov{\sigma}^{n-1}\wedge\omega^2=
c_X(n-1)! (\sigma,\ov{\sigma})_X q_X(\omega).
\end{equation}
Since $c_X>0$ and $(\sigma,\ov{\sigma})_X>0$ we get that $q_X(\omega)>0$ as claimed. 
\item[(3)]
The index  of $q_X$ is $(3,b_2(X)-3)$ (i.e.~that is the index of its restriction   to $H^2(X;\RR)$). In fact applying Equation~\eqref{polarizzo} to $\alpha_1=\ldots=\alpha_{2n-1}=\omega$ and arbitrary $\alpha_{2n}$ we get that $\omega^{\bot}$ is equal to the primitive cohomology  $H^2_{pr}(X)$ (primitive with respect to $\omega$). On the other hand 
Equation~\eqref{polarizzo} with $\alpha_1=\ldots=\alpha_{2n-2}=\omega$ and  $\alpha_{2n-1},\alpha_{2n}\in\omega^{\bot}$ gives that a positive multiple of $q_X|_{\omega^{\bot}}$ is equal to the standard quadratic form on $H^2_{pr}(X)$ (recall Inequality~\eqref{bellin}). By the Hodge index Theorem it follows that the restriction of $q_X$ to $\omega^{\bot}\cap H^2(X;\RR)$ has index $(2,b_2(X)-3)$. Since $q_X(\omega)>0$ it follows that 
$q_X$ has index $(3,b_2(X)-3)$.
\item[(4)]
Let $D$  be an effective divisor on $X$; then $(\omega,D)_X>0$. (Of course   $(\omega,D)_X$ denotes $(\omega,c_1(\cO_X(D)))_X$.) In fact the inequality follows from the inequality $\int_D\omega^{2n-1}>0$ together with~\eqref{polarizzo} and Item~(2) above. 
\item[(5)]
Let $f\colon X\dashrightarrow Y$ be a birational map  where $Y$ is a HK manifold. Since $X$ and $Y$ have trivial canonical bundle $f$ defines an isomorphism $U\overset{\sim}{\lra} V$ where $U\subset X$ and $V\subset Y$ are open sets with complements of codimension at least $2$. It follows that $f$ induces an isomorphism $f^{*}\colon H^2(Y;\ZZ)\overset{\sim}{\lra} H^2(X;\ZZ)$; $f^{*}$ is an isometry of lattices, see Lemma~2.6 of~\cite{huy}. 
\end{itemize}
\end{rmk}
The proof of existence of $q_X$ and $c_X$ may be adapted to prove the following useful generalization of~\eqref{belleq}.
\begin{prp}\label{prp:piatta}
 Let $X$ be a HK manifold of dimension $2n$. Let $\cX\to T$ be a representative of the deformation space of $X$. Suppose  that $0\not=\gamma\in H^{p,p}_{\RR}(X)$ is a class which remains of type $(p,p)$ under Gauss-Manin parallel transport (e.~g.~the Chern class $c_p(X)$). Then $p$ is even and moreover there exists $c_{\gamma}\in\RR$ such that 
 \begin{equation}
\int_X\gamma\wedge\alpha^{2n-p}=c_{\gamma}q_X(\alpha)^{n-p/2}.
\end{equation}
\end{prp}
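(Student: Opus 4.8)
The plan is to mimic the proof of existence of $q_X$ and $c_X$ given above, replacing the degree-$2n$ polynomial $G(\alpha)=\int_X\alpha^{2n}$ with the degree-$(2n-p)$ polynomial $G_\gamma(\alpha)=\int_X\gamma\wedge\alpha^{2n-p}$. First I would observe that $G_\gamma$ is not the zero polynomial: indeed, if $\omega$ is a K\"ahler class on $X$ then $\int_X\gamma\wedge\omega^{2n-p}>0$ provided $\gamma$ is (in an appropriate sense) positive; more to the point, one does not even need positivity, because it suffices to know $G_\gamma\not\equiv 0$, and if it vanished identically then, since $\gamma$ remains of type $(p,p)$ under parallel transport, the analogous integral would vanish on every fiber $X_t$, contradicting hard Lefschetz applied with a K\"ahler class on a single fiber (which forces $\int_{X}\gamma\wedge\omega^{2n-p}\neq 0$ for $\gamma\neq 0$ of type $(p,p)$, as $\gamma$ pairs nontrivially against $H^{2n-2p}$ under multiplication by $\omega^{2n-p}$ when $\gamma$ is primitive, and in general after a Lefschetz decomposition).

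Next I would run the vanishing argument on $V=\overline{\operatorname{Im}\pi}\subset\PP(H^2(X))$, the Zariski closure of the image of the local period map. As recalled in the excerpt, $V$ is an irreducible quadric hypersurface cut out by $q_X$, i.e.\ $I(V)=(q_X)$, and for $\sigma_t\in H^{2,0}(X_t)$ one has the strong vanishing $\int_{X_t}\sigma_t^{n+1}\wedge\alpha_1\wedge\cdots\wedge\alpha_{n-1}=0$ for all $\alpha_i$. The key additional input here is the type computation: since $\gamma$ is of type $(p,p)$ on every fiber $X_t$, for $\sigma_t\in H^{2,0}(X_t)$ we get $\int_{X_t}\gamma\wedge\sigma_t^{k}\wedge(\text{anything in }H^2)=0$ as soon as $k>n-p/2$, because $\gamma\wedge\sigma_t^{k}$ has Hodge type $(p+2k,p)$ and this exceeds the available holomorphic degree $2n$ once $k>n-p/2$ (note $p$ must be even for this bound to be an integer; if $p$ were odd the class $\gamma$ of type $(p,p)$ would be odd-dimensional and a parallel-transport-invariant real class of odd type is impossible by the same index/signature considerations as in Remark~\ref{rmk:quix}, so $p$ even is forced first). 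By Gauss-Manin parallel transport this says that $G_\gamma$ together with all its partial derivatives of order $\le n-p/2-1$ vanish identically on $V$.

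Now the algebra: $G_\gamma$ is a homogeneous polynomial of degree $2n-p$ on $H^2(X)$, it vanishes on the quadric $V=\{q_X=0\}$ to order at least $n-p/2$, i.e.\ $q_X^{\,n-p/2}\mid G_\gamma$ in the polynomial ring, and since $\deg G_\gamma=2n-p=2(n-p/2)=\deg q_X^{\,n-p/2}$, we conclude $G_\gamma=c_\gamma\, q_X^{\,n-p/2}$ for some constant $c_\gamma$. That $c_\gamma$ is real follows from the reality of $\gamma$, $q_X$, and a K\"ahler class (evaluate both sides at a K\"ahler $\omega$, where $q_X(\omega)>0$ by Remark~\ref{rmk:quix}(2)). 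This gives the stated formula $\int_X\gamma\wedge\alpha^{2n-p}=c_\gamma\,q_X(\alpha)^{n-p/2}$.

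I expect the main obstacle to be the nonvanishing step $G_\gamma\not\equiv 0$ and, more subtly, making the divisibility $q_X^{\,n-p/2}\mid G_\gamma$ rigorous: one must check that ``all derivatives of order $\le n-p/2-1$ vanish on the irreducible quadric $q_X$'' genuinely forces divisibility by $q_X^{\,n-p/2}$, which is a standard fact about vanishing to prescribed order along a reduced irreducible hypersurface but should be invoked carefully (e.g.\ by working in the local ring at a smooth point of $V$, where $q_X$ is a local coordinate, so vanishing to order $m$ along $V$ is literally divisibility by $q_X^m$, and then using that $q_X$ is prime). Everything else is a direct transcription of the argument already in the text, with the single new ingredient being the refined type bound $k>n-p/2\Rightarrow$ vanishing, which also cleanly pins down the parity of $p$.
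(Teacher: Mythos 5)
Your main computation is precisely the adaptation the paper has in mind (the paper offers nothing beyond ``the proof of existence of $q_X$ and $c_X$ may be adapted''): the refined type bound $\int_{X_t}\gamma\wedge\sigma_t^k\wedge(\cdots)=0$ once $p+2k>2n$, the consequent vanishing of all derivatives of $G_\gamma$ of order $\le n-p/2-1$ along the period quadric, and the divisibility $q_X^{\,n-p/2}\mid G_\gamma$ combined with the degree count. That part, including your care about vanishing order along the smooth irreducible quadric, is correct and gives the displayed identity when $p$ is even.

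Two of your side claims are wrong, however, and one of them leaves a genuine gap. First, hard Lefschetz does \emph{not} give $G_\gamma\not\equiv 0$: if $\gamma$ is a nonzero \emph{primitive} $(p,p)$-class with $p\ge 1$ then $\gamma\wedge\omega^{2n-2p+1}=0$, hence $\int_X\gamma\wedge\omega^{2n-p}=0$; in the Lefschetz decomposition only the component of $\gamma$ in $\omega^p\cdot H^0$ survives this pairing, and more generally $G_\gamma\equiv 0$ whenever $\gamma$ annihilates the image of $\Sym^{2n-p}H^2(X)$ in $H^{4n-2p}(X)$, which can happen for nonzero $\gamma$. This is harmless for the formula itself (take $c_\gamma=0$), but it undercuts your derivation of ``$p$ is even''. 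Your stated reasons for parity are non sequiturs: the bound $k>n-p/2$ is perfectly meaningful for half-integers, and \Ref{rmk}{quix} concerns the signature of $q_X$ on $H^2$ and says nothing about $(p,p)$-classes with $p$ odd. What your argument actually yields for odd $p$ is that $q_X^{(2n-p+1)/2}$ divides the degree-$(2n-p)$ polynomial $G_\gamma$, forcing $G_\gamma\equiv 0$ --- which is not a contradiction with $\gamma\ne 0$. So the honest output of the method is the dichotomy: either $G_\gamma\equiv 0$, or $p$ is even and $G_\gamma=c_\gamma q_X^{\,n-p/2}$ with $c_\gamma\ne 0$. To conclude that $p$ is even one needs the extra input $G_\gamma\not\equiv 0$; note that in the one place the proposition is applied (the proof of \Ref{thm}{kalpos}) $\gamma$ is the Poincar\'e dual of a subvariety $Z$ and $\int_Z\omega^{2n-p}>0$ supplies exactly this nonvanishing, after which parity does follow from your degree count. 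You should either add such a hypothesis, or supply a genuinely different argument for the parity statement in full generality.
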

Our next topic is Verbitsky's theorem~\cite{verbo} (see also~\cite{bog2}). Let $X$ be a HK-manifold of dimension $2n$.  Our (sketch of) proof of~\eqref{belleq} shows that 
\begin{equation}\label{svanisce}
\text{if $\alpha\in H^2(X)$ and $q_X(\alpha)=0$ then $\alpha^{n+1}=0$ in $H^{2n+2}(X)$.}
\end{equation}
In fact  adopting the notation introduced in the proof of~\eqref{belleq} we have $0=\sigma_t^{n+1}\in H^{2n+2}(X_t)$ and hence by Gauss-Manin transport we get that $0=(\psi_t^{-1}\sigma_t)^{n+1}\in H^{2n+2}(X)$. Since the set  $\{\psi_t^{-1}\sigma_t\mid t\in T\}$ is Zariski dense in the zero-set $V(q_X)\subset H^2(X)$ we get~\eqref{svanisce}. Let $I\subset Sym^{\bullet}H^2(X)$ be the ideal generated by $\alpha^{n+1}$ where $\alpha\in H^2(X)$ and $q_X(\alpha)=0$:
\begin{equation}
I:=\la \{\alpha^{n+1}\mid \alpha\in H^2(X),\quad q_X(\alpha)=0\} \ra.
\end{equation}
By~\eqref{svanisce} we have a natural map of $\CC$-algebras
\begin{equation}\label{surf}
Sym^{\bullet}H^2(X)/I\longrightarrow H^{\bullet}(X).
\end{equation}
\begin{thm}{\rm[Verbitsky]}\label{thm:spracht}
Map~\eqref{surf} is injective.
\end{thm}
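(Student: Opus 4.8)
Set $V:=H^2(X;\CC)$ and let $R:=\Sym^{\bullet}V/I$, graded so that $R^k$ is the image of $\Sym^k V$. Map~\eqref{surf} is then a graded algebra homomorphism $R\lra H^{\bullet}(X)$ whose image is the subalgebra $SH^{\bullet}(X)$ generated by $H^2(X)$; being surjective onto $SH^{\bullet}(X)$, it is injective if and only if $\dim R^k=\dim SH^{2k}(X)$ for every $k$. The plan is to compute the two sides by independent means and compare.

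On the algebraic side I would analyse $R$ using only the quadratic form. Since $q_X$ is non-degenerate of rank $b_2(X)\ge 3$, its isotropic vectors span $V$ and are Zariski-dense in the affine cone $\{q_X=0\}$; a by now standard computation (the algebraic core of the Looijenga--Lunts--Verbitsky formalism) then shows that $R$ is finite-dimensional, that $R^k=\Sym^k V$ for $k\le n$ and $R^k=0$ for $k>2n$, and more precisely that $R$ is the underlying graded vector space of the irreducible representation of $\mathfrak{so}(V\oplus U)$ of highest weight $n\varpi_{1}$, where $U$ is a hyperbolic plane and $\varpi_{1}$ is the highest weight of the $(b_2(X)+2)$-dimensional standard representation; the cohomological grading matches the eigenvalues of a Cartan element supported on the $U$-summand. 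In particular $\dim R^k$ is given by a Weyl-type formula. (For a $K3$, $n=1$ and $R$ is the $24$-dimensional standard representation of $\mathfrak{so}(4,20)$, consistent with $\dim H^{\bullet}(K3)=1+22+1$.)

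On the geometric side I would put the same Lie algebra to work on $H^{\bullet}(X)$. For each $\omega\in H^2(X;\RR)$ with $q_X(\omega)>0$, deform the complex structure of $X$ inside the twistor family of a K\"ahler metric---equivalently, invoke surjectivity of the period map---so that a Gauss--Manin translate of $\omega$ becomes a K\"ahler class; hard Lefschetz then furnishes an $\mathfrak{sl}_2$-triple $(L_{\omega},h,\Lambda_{\omega})$ on $H^{\bullet}(X)$, and these operators, commuting with Gauss--Manin parallel transport, are intrinsic to $X$. Let $\mathfrak g\subset\mathfrak{gl}(H^{\bullet}(X))$ be the Lie algebra they generate. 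The crucial point is the identification $\mathfrak g\cong\mathfrak{so}(H^2(X;\RR)\oplus U)\cong\mathfrak{so}(4,b_2(X)-2)$: one shows $[L_{\omega_1},L_{\omega_2}]=0$ and that $[L_{\omega_1},\Lambda_{\omega_2}]$ depends only on $(\omega_1,\omega_2)_X$, and that these are exactly the relations of the orthogonal Lie algebra of $q_X$ enlarged by a hyperbolic plane, forcing $\mathfrak g$ to be this algebra and nothing larger. Two inputs feed in: that the positive cone $\{q_X>0\}$ is connected, spans $H^2(X;\RR)$, and consists of classes that become K\"ahler after deformation (so that the $L_{\omega}$ together recover multiplication by all of $H^2(X)$); and Fujiki's relation in the form~\eqref{svanisce}, which makes the $\mathfrak{sl}_2$-triples compatible with the quadric $\{q_X=0\}$.

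Finally I would identify $SH^{\bullet}(X)$ with the $\mathfrak g$-submodule of $H^{\bullet}(X)$ generated by $1\in H^0(X)$: it is an algebra containing $H^2(X)$, hence stable under the $L_{\omega}$, and one checks (a short bookkeeping with the commutation relations and the grading) that it is stable under the $\Lambda_{\omega}$ as well; its extremal graded pieces $SH^{0}(X)=H^0(X)$ and $SH^{4n}(X)=H^{4n}(X)$ being one-dimensional then forces it to be the irreducible $\mathfrak{so}(H^2(X;\RR)\oplus U)$-module of highest weight $n\varpi_{1}$, with $H^{2k}(X)\cap SH^{\bullet}(X)$ its degree-$k$ piece. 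Comparing with the algebraic side, $\dim SH^{2k}(X)=\dim R^k$ for all $k$, so Map~\eqref{surf} is injective. \textbf{The main obstacle is the geometric step}: attaching an $\mathfrak{sl}_2$-triple intrinsically to every positive class (this rests on Bogomolov's unobstructedness and the twistor families) and then recognising, from the commutation relations alone, that the Lie algebra they generate is precisely the orthogonal algebra of $q_X\oplus U$.
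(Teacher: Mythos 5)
The paper does not prove this theorem: it is stated as a citation of Verbitsky (with a pointer to Bogomolov's companion note), and the only thing proved in the text is~\eqref{svanisce}, i.e.\ that Map~\eqref{surf} is well defined. So there is no in-paper proof to compare against; what you have written is a reconstruction of the actual Verbitsky/Looijenga--Lunts argument, and as an outline it is correct. Your two sides match up as they should: $\Sym^{\bullet}H^2/I$ is the irreducible $\mathfrak{so}(H^2\oplus U)$-module of highest weight $n\varpi_1$ (sanity checks: $24=1+22+1$ for a $K3$, and $325-1=324=1+23+276+23+1$ for $K3^{[2]}$), and the subalgebra $SH^{\bullet}(X)$ generated by $H^2$ is the cyclic $\mathfrak g$-module generated by $1\in H^0(X)$. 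You also correctly identify where the real work is: producing the $\mathfrak{sl}_2$-triple $(L_{\omega},h,\Lambda_{\omega})$ for every class in the positive cone (via Gauss--Manin transport from deformations where $\omega$ becomes K\"ahler, using Bogomolov unobstructedness, the twistor construction and the fact that cup product commutes with parallel transport) and verifying that the resulting operators satisfy exactly the relations of $\mathfrak{so}(4,b_2-2)$.

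Two small points of hygiene. First, in the last step the irreducibility of $SH^{\bullet}(X)$ is cleaner if phrased as: $1$ is a weight vector annihilated by all the $\Lambda_{\omega}$ and by the degree-zero semisimple part (which acts on the line $H^0$ through a character, necessarily trivial), so the cyclic module it generates is a finite-dimensional lowest-weight module over a semisimple Lie algebra, hence irreducible; the one-dimensionality of $H^{4n}(X)$ then only serves to identify the extreme weight as $n\varpi_1$. Second, the stability of $SH^{\bullet}(X)$ under the $\Lambda_{\omega}$ is most economically obtained from the PBW decomposition $U(\mathfrak g)=U(\mathfrak n^{+})U(\mathfrak h)U(\mathfrak n^{-})$ applied to $U(\mathfrak g)\cdot 1$, rather than by direct bookkeeping. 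Neither point is a gap in the approach, only in the write-up.
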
 
In particular we get that cup-product defines an injection
\begin{equation}\label{inietto}
\bigoplus_{q=0}^n Sym^q H^2(X)\hra H^{\bullet}(X).
\end{equation}
 S.~M.~Salamon~\cite{sal} proved that there is a non-trivial linear constraint on the Betti numbers of a compact K\"ahler manifold  carrying a holomorphic symplectic form (for example a HK manifold); the proof consists in a clever application of the Hirzebruch-Riemann-Roch formula to the sheaves $\Omega^p_X$ and the observation that the symplectic form induces an isomorphism $\Omega^p_X\cong \Omega^{2n-p}_X$ where $2n=\dim X$\footnote{A non-zero section of the canonical bundle defines an isomorphism $\Omega^{2n-p}_X\cong (\Omega^p_X)^{\vee}=\wedge^ p T_X$ and the symplectic form defines an isomorphism $T_X\cong \Omega_X$ and hence $\wedge^ p T_X\cong \Omega^p_X$}.
 \begin{thm}{\rm[S.~M.~Salamon]}
Let $X$ be a compact K\"ahler manifold of dimension $2n$ carrying a holomorphic symplectic form. Then
\begin{equation}\label{vincolo}
nb_{2n}(X)=2\sum_{i=1}^{2n}(-1)^i(3i^2 -n)b_{2n-i}(X).
\end{equation}
\end{thm}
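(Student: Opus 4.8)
The plan is to reformulate the identity symmetrically, reduce it by Poincar\'e duality and Hodge theory to a statement about the holomorphic Euler characteristics $\chi(X,\Omega^p_X)$, and then compute those by Hirzebruch--Riemann--Roch. First, using $b_k(X)=b_{4n-k}(X)$ one checks that the assertion is equivalent to
\[
\sum_{k=0}^{4n}(-1)^k\bigl(3(k-2n)^2-n\bigr)b_k(X)=0
\]
(separate the ranges $k<2n$, $k=2n$, $k>2n$ and substitute $k=2n\mp i$ to recover the stated form). Expanding $(k-2n)^2$ and using Poincar\'e duality once more to evaluate $\sum_k(-1)^k b_k=e(X)$ and $\sum_k(-1)^k k\,b_k=2n\,e(X)$, this is in turn equivalent to
\[
\sum_{k=0}^{4n}(-1)^k k^2 b_k(X)=\tfrac{1}{3}n(12n+1)\,e(X).
\]

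Next I pass to Hodge numbers. Because $X$ is K\"ahler, $b_k=\sum_{p+q=k}h^{p,q}$ and $h^{p,q}=h^{q,p}$; because $X$ carries a holomorphic symplectic form, $K_X\cong\cO_X$ and the form identifies $\wedge^p T_X$ with $\Omega^p_X$, so that $\Omega^p_X\cong\Omega^{2n-p}_X$ and $h^{p,q}=h^{2n-p,q}$. Set $\chi^p:=\chi(X,\Omega^p_X)=\sum_q(-1)^q h^{p,q}$ and $A_j:=\sum_p(-1)^p p^j\chi^p=\sum_{p,q}(-1)^{p+q}p^j h^{p,q}$, so $A_0=e(X)$. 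Replacing $h^{p,q}$ by $h^{2n-p,q}$ and reindexing gives $A_1=2nA_0-A_1$, hence $A_1=n\,e(X)$; the same device applied to $\sum_{p,q}(-1)^{p+q}pq\,h^{p,q}$, together with $h^{p,q}=h^{q,p}$, gives $\sum_{p,q}(-1)^{p+q}pq\,h^{p,q}=nA_1=n^2e(X)$. Expanding $(p+q)^2=p^2+2pq+q^2$ and using Hodge symmetry to equate the $p^2$-- and $q^2$--sums, we get
\[
\sum_{k=0}^{4n}(-1)^k k^2 b_k(X)=2A_2+2n^2 e(X),
\]
so it remains to prove $A_2=\tfrac{1}{6}n(6n+1)\,e(X)$.

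For this, let $x_1,\dots,x_{2n}$ be the Chern roots of $T_X$. Hirzebruch--Riemann--Roch applied to $\Omega^p_X=\wedge^p\Omega^1_X$ yields
\[
\chi_y(X):=\sum_p\chi^p y^p=\int_X\prod_{i=1}^{2n}\frac{x_i(1+y e^{-x_i})}{1-e^{-x_i}}=\int_X\prod_{i=1}^{2n}\bigl(x_i+u\,\psi(x_i)\bigr),\qquad u:=1+y,\quad \psi(x):=\frac{x}{e^x-1},
\]
the last equality coming from the rewriting $1+ye^{-x_i}=(1-e^{-x_i})+u e^{-x_i}$. Since $A_2=\bigl[(y\tfrac{d}{dy})^2\chi_y(X)\bigr]_{y=-1}=\bigl[\partial_u^2-\partial_u\bigr]\chi_y(X)\big|_{u=0}$, one differentiates the product in $u$, sets $u=0$, and picks out the degree-$2n$ component, using $\psi_0=1$, $\psi_1=-\tfrac{1}{2}$, $\psi_2=\tfrac{1}{12}$. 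The first-derivative term gives $-\partial_u\chi_y(X)|_{u=0}=-\psi_1\cdot 2n\int_X c_{2n}=n\,e(X)$. The second-derivative term involves the symmetric function $\sum_{j\ne k}x_j^2\prod_{i\ne j,k}x_i=c_1 c_{2n-1}-2n\,c_{2n}$, and here the holomorphic symplectic hypothesis intervenes a second time through $c_1(X)=0$: the term $c_1 c_{2n-1}$ drops out, and a short computation gives $\partial_u^2\chi_y(X)|_{u=0}=\tfrac{1}{6}n(6n-5)\,e(X)$, whence $A_2=n\,e(X)+\tfrac{1}{6}n(6n-5)e(X)=\tfrac{1}{6}n(6n+1)e(X)$, completing the proof. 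The one place that needs care is this final computation: arranging the second $u$-derivative of a product of $2n$ factors, keeping track of which symmetric functions of the Chern roots survive in top degree, and recognising that $c_1(X)=0$ is exactly what eliminates the unwanted term; the Bernoulli coefficients $\psi_1=-\tfrac{1}{2}$, $\psi_2=\tfrac{1}{12}$ of the Todd class are what eventually produce the coefficients $3i^2-n$.
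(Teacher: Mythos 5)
Your proof is correct, and it follows exactly the route the paper attributes to Salamon (the paper only sketches it in one sentence): apply Hirzebruch--Riemann--Roch, i.e.\ the $\chi_y$-genus, to the sheaves $\Omega^p_X$ and exploit the isomorphism $\Omega^p_X\cong\Omega^{2n-p}_X$ induced by the symplectic form, together with $c_1(X)=0$. All the intermediate identities ($S_1=2n\,e$, $A_1=n\,e$, $\sum(-1)^{p+q}pq\,h^{p,q}=n^2e$, $\sum_{j\ne k}x_j^2\prod_{i\ne j,k}x_i=c_1c_{2n-1}-2nc_{2n}$, and $A_2=\tfrac{1}{6}n(6n+1)e$) check out.
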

The following corollary of Verbitsky's and Salamon's results was obtained by Beauville (unpublished) and Guan~\cite{guan}.
 \begin{crl}{\rm[Beauville and Guan]}\label{crl:ventitre}
Let $X$ be a HK $4$-fold. Then $b_2(X)\le 23$. If equality holds then $b_3(X)=0$ and 
moreover the map
\begin{equation}\label{prodsim}
Sym^2 H^2(X;\QQ)\lra H^4(X;\QQ)
\end{equation}
induced by cup-product is an isomorphism.
\end{crl}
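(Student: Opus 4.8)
The plan is to combine Salamon's linear relation \eqref{vincolo} with Verbitsky's Theorem~\ref{thm:spracht} and the structure of the Beauville-Bogomolov form. Set $2n=4$, so $n=2$ and $b_0=b_8=1$, $b_1=b_7=0$ (simple connectedness), $b_2=b_6=:b$, $b_3=b_5=:c$, and write $b_4=:d$. First I would specialize Salamon's identity \eqref{vincolo} to dimension $4$: running $i$ from $1$ to $4$ with the coefficients $3i^2-n=3i^2-2$ gives, after using Poincar\'e duality to fold the terms, a single linear equation relating $b$, $c$, $d$ and the known ranks $b_0=1$, $b_1=0$. This yields an expression for $d$ purely in terms of $b$ and $c$ (of the shape $d = \text{const} + \text{const}\cdot b - \text{const}\cdot c$); the precise arithmetic I would not carry out here, but the key qualitative feature is that $d$ decreases as $c$ grows and increases linearly in $b$.

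Next I would bring in Verbitsky's theorem in the form \eqref{inietto}: cup-product gives an injection $\bigoplus_{q=0}^{2}\Sym^q H^2(X)\hookrightarrow H^\bullet(X)$, and in particular the degree-$4$ piece gives an injection $\Sym^2 H^2(X;\QQ)\hookrightarrow H^4(X;\QQ)$. Hence $d=b_4\ge \binom{b_2+1}{2}=\binom{b+1}{2}$. Substituting the Salamon expression for $d$ into this inequality produces a quadratic inequality in $b$ (with $c\ge 0$ only making it harder to satisfy), and solving it forces $b\le 23$. This is the main quantitative step, and I expect it to be the heart of the argument: one must check that the quadratic $\binom{b+1}{2}\le d(b,c)$ has $b=23$ as its extreme admissible integer value, which is a short but slightly delicate piece of bookkeeping with the Salamon coefficients.

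For the equality case $b=23$, the quadratic inequality must be an equality, which pins down $c$: tracing back through the Salamon relation, $b=23$ is compatible with the Verbitsky bound only if $c=b_3(X)=0$. Finally, when $b=23$ and $c=0$ we get $d=b_4=\binom{24}{2}=276=\dim\Sym^2 H^2(X;\QQ)$, so the injective map \eqref{prodsim} is an isomorphism for dimension reasons. The only subtlety I anticipate is making sure the Salamon identity is transcribed correctly in low dimension (the coefficients $3i^2-n$ and the Poincar\'e-duality folding) so that the numerology comes out to exactly $23$ and $276$; everything else is then forced.
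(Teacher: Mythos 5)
Your proposal follows exactly the argument in the paper: Salamon's relation in dimension $4$ gives $b_4=46+10b_2-b_3$, Verbitsky's injection gives $b_4\ge\binom{b_2+1}{2}$, and combining these yields $b_2^2+b_2\le 92+20b_2-2b_3$, hence $b_2\le 23$ with $b_3=0$ and $b_4=276$ in the equality case, so \eqref{prodsim} is an isomorphism by dimension count. The strategy and all the key steps coincide with the paper's proof; the only thing left to you is the (correct) arithmetic you deferred.
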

\begin{proof}
Let $b_i:=b_i(X)$. Salamon's equation~\eqref{vincolo} for $X$ reads
\begin{equation}\label{michelle}
b_4=46+10 b_2-b_3.
\end{equation}
By Verbitsky's~\Ref{thm}{spracht} - see~\eqref{inietto} - we have 
\begin{equation}\label{livorno}
{b_2+1\choose 2}\le b_4.
\end{equation}
Replacing $b_4$ by the right-hand side of~\eqref{michelle} we get that
\begin{equation}
b_2^2+b_2\le 92+20 b_2 -2 b_3\le 92+20 b_2.
\end{equation}
It follows that $b_2\le 23$ and that if equality holds then $b_3=0$. Suppose that $b_2=23$: then  $b_4=276$ by~\eqref{michelle} and hence~\eqref{prodsim} follows from Verbitsly's~\Ref{thm}{spracht}.
\end{proof}
  We mention that Guan~\cite{guan} obtained other restrictions on $b_2(X)$ for a HK four-fold $X$: for example $8<b_2(X)<23$ is \lq\lq forbidden\rq\rq.

 \subsection{The K\"ahler cone}
\setcounter{equation}{0}
Let $X$ be a HK manifold of dimension $2n$.  The convex cone $\cK_X\subset H^{1,1}_{\RR}(X)$  of K\"ahler classes is the {\it K\"ahler cone of $X$}. 
Item~(3) of~\Ref{rmk}{quix} gives that the
restriction of $q_X$ to $H^{1,1}_{\RR}(X)$ is non-degenerate of signature $(1,b_2(X)-3)$; it follows that the cone
\begin{equation}\label{quadpos}
\{\alpha\in H^{1,1}_{\RR}(X)\mid q_X(\alpha)>0\}
\end{equation}
 has two connected components. 
By Item~(2) of~\Ref{rmk}{quix}  $\cK_X$ is contained in~\eqref{quadpos}. Since $\cK_X$ is  convex   it is contained in a single connected component of~\eqref{quadpos}; that component is the {\it positive cone} $\cC_X$. The following result is proved in the erratum of~\cite{huy}.
\begin{thm}{\rm [Huybrechts]}\label{thm:kalpos}
 Let $X$ be a HK manifold. Let $\cX\to T$ be a representative of $Def(X)$ with $T$ irreducible. If $t\in T$ is very general (i.e.~outside a countable union of proper analytic subsets of $T$) then 
 \begin{equation}\label{rinogaetano}
\cK_{X_t}=\cC_{X_t}. 
\end{equation}
\end{thm}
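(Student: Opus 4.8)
The plan is to show, for a very general $t\in T$, that every class $\alpha\in\cC_{X_t}$ which is not already known to be K\"ahler can be reached from a genuine K\"ahler class by a path that never crosses a wall, and that the only possible walls are kernels of intersection with classes of rational curves — which for very general $t$ simply do not exist. The starting point is that $\cK_{X_t}$ is a nonempty open convex subcone of $\cC_{X_t}$ (Item~(2) of \Ref{rmk}{quix}), so it suffices to prove $\cK_{X_t}$ is also closed in $\cC_{X_t}$; since $\cC_{X_t}$ is connected, this forces equality. To control the boundary, I would invoke the characterization of the K\"ahler cone of a HK manifold inside the positive cone (Huybrechts' projectivity/K\"ahler-cone results): a class $\alpha\in\cC_{X}$ fails to be K\"ahler precisely when there is a \emph{uniruled} divisor, or more generally a rational curve class $R$, with $(\alpha,R)_X\le 0$ — equivalently $\partial\cK_X\cap\cC_X$ is a union of hyperplane sections $R^{\perp}$ for finitely many (up to scaling) classes $R\in H^{1,1}_{\ZZ}(X)$ of negative square, each $R^{\perp}$ meeting $\cC_X$.

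The key step is then a specialization/monodromy argument. Fix a boundary class: were $\cK_{X_t}\subsetneq\cC_{X_t}$ for very general $t$, then for such $t$ there would be a nonzero class $R_t\in H^{1,1}_{\ZZ}(X_t)$ of negative Beauville--Bogomolov square whose orthogonal complement meets $\cC_{X_t}$ and bounds the K\"ahler cone. Using the Gauss--Manin identification $\phi_t\colon H^2(X)\overset{\sim}{\lra}H^2(X_t)$, pull this class back to a class in $H^2(X;\ZZ)$. There are only countably many integral classes in $H^2(X;\ZZ)$, so after discarding a countable union of proper analytic subsets of $T$ (the loci where a fixed integral class becomes of type $(1,1)$, i.e.\ where it lies in the period hyperplane) we may assume that $H^{1,1}_{\ZZ}(X_t)=0$ for the remaining very general $t$: indeed $H^{1,1}_{\ZZ}(X_t)\ne 0$ means $\phi_t^{-1}H^{2,0}(X_t)$ is contained in the hyperplane $\{\delta\}^{\perp}$ for some fixed nonzero integral $\delta$, and by infinitesimal Torelli (the injectivity in~\eqref{derper}) the image of the period map $\pi$ in \eqref{mappaperiodi} is not contained in any such hyperplane, so each condition ``$\delta$ is of type $(1,1)$ at $t$'' cuts out a proper analytic subset of $T$. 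Hence for very general $t$ there is \emph{no} candidate wall class $R_t$ at all, so $\partial\cK_{X_t}\cap\cC_{X_t}=\es$, i.e.\ $\cK_{X_t}=\cC_{X_t}$.

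The main obstacle — and the reason this is genuinely Huybrechts' theorem and not a formality — is the input that the boundary of the K\"ahler cone inside the positive cone is always cut out by \emph{integral} $(1,1)$-classes (rational curve / uniruled divisor classes), rather than by some transcendental phenomenon. Establishing that requires the hard analytic and deformation-theoretic machinery of the erratum to~\cite{huy}: one deforms a supposed non-K\"ahler class $\alpha\in\cC_X$ with $q_X(\alpha)>0$ along a twistor-type family to reach a projective deformation, applies the projective cone theorem / rationality results there, and transports an extremal rational curve back to produce the integral wall class on the original fibre. Once that structural statement is in hand, the specialization step above is essentially bookkeeping with countably many analytic subsets. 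A secondary point to be careful about is that ``very general'' must be taken uniformly: one first fixes the (countable) set of integral classes in $H^2(X;\ZZ)$, then removes the corresponding period hyperplanes, and only afterwards concludes — this ordering is what makes the single very general locus work for all potential walls simultaneously.
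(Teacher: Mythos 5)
Your route is workable but it is genuinely different from the one in the paper, and you have mislocated where the real content sits. The paper's proof never mentions rational curves or a wall-and-chamber structure: for very general $t$ (outside the countably many loci $T_{\gamma}$ where a fixed integral class $\gamma\in H^{2p}(X;\ZZ)$ becomes of type $(p,p)$ without doing so identically), \emph{every} closed analytic subset $Z\subset X_t$ has a deformation-invariant Hodge class, so \Ref{prp}{piatta} gives $\int_Z\alpha^{2n-p}=c_{\gamma}q_{X_t}(\alpha)^{n-p/2}$ with $c_{\gamma}>0$ (tested against a K\"ahler class). Feeding this into the Demailly--Paun numerical characterization of the K\"ahler cone~\cite{dempau} identifies $\cK_{X_t}$ as a connected component of $\cC_{X_t}\coprod(-\cC_{X_t})$, and one concludes since $\cK_{X_t}\subset\cC_{X_t}$. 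What this buys is that one never needs to know that non-K\"ahlerness of a class in $\cC_X$ is detected by an \emph{integral} curve class: any analytic obstruction, of any codimension, is handled uniformly.

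By contrast, your argument takes as input the statement that $\partial\cK_X\cap\cC_X$ is cut out by rational curve classes. That is Boucksom's theorem~\cite{bouck}, which the paper states only \emph{after} \Ref{thm}{kalpos} and which is a strictly stronger structural result; the erratum of~\cite{huy}, to which you defer, does not prove it --- it proves \Ref{thm}{kalpos} itself by the Demailly--Paun route above. So as written your proof rests on a black box that is not where you say it is, and there is a whiff of circularity since Boucksom's argument builds on Huybrechts' cone results. Granting Boucksom's theorem, your specialization step does close, but two small points need attention: (i) curve classes live in $H_2(X_t;\ZZ)$, not $H^2(X_t;\ZZ)$, so ``$H^{1,1}_{\ZZ}(X_t)=0$ kills all walls'' needs the remark that $q_{X_t}$ identifies $H_2(X_t;\QQ)$ with $H^2(X_t;\QQ)$ as Hodge structures, so a curve class would produce a nonzero element of $H^{1,1}_{\QQ}(X_t)$; (ii) the local finiteness of the walls that you assert is neither needed nor justified --- all you need is that a single obstructing class exists, and for very general $t$ it cannot.
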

\begin{proof}
Let $0\in T$ be the point such that $X_0\cong X$ and the induced map of germs $(T,0)\to Def(X)$ is an isomorphism\footnote{The map $(T,0)\to Def(X)$ depends on the choice of an isomorphism $f\colon X_0\overset{\sim}{\lra} X$ but whether it is an isomorphism or not is independent of $f$.}.
By shrinking $T$ around $0$ if necessary we may assume that $T$ is simply connected and that $\cX\to T$ represents $Def(X_t)$ for every $t\in T$.
In particular the Gauss-Manin connection gives an isomorphism
$P_t\colon H^{\bullet}(X;\ZZ)\overset{\sim}{\lra}H^{\bullet}(X_t;\ZZ)$ for every $t\in T$. Given $\gamma\in H^{2p}(X;\ZZ)$ we let 
\begin{equation}
T_{\gamma}:=\{t\in T\mid \text{$P_t(\gamma)$ is of type $(p,p)$}\}. 
\end{equation}
Let
\begin{equation}\label{fuorida}
t\in (T\setminus\bigcup_{T_{\gamma}\not=T}T_{\gamma})
\end{equation}
and $Z\subset X_t$ be a closed analytic subset of codimension $p$; we claim that  
\begin{equation}\label{positivo}
\int_{Z}\alpha^{2n-p}>0\quad \text{if $q_{X_t}(\alpha)>0$.}
\end{equation}
In fact let $\gamma\in H^{p,p}_{\RR}(X_t)$ be the Poincar\'e dual of $Z$. By~\eqref{fuorida}   $\gamma$ remains of type $(p,p)$ for every deformation of $X_t$; by~\Ref{prp}{piatta} $p$ is even and moreover there exists $c_{\gamma}\in\RR$ such that 
\begin{equation}\label{mina}
\int_Z\alpha^{2n-p}=c_{\gamma}q_X(\alpha)^{n-p/2}\qquad\forall \alpha\in H^2(X_t).
\end{equation}
Let $\omega$ be a K\"ahler class. Since $0<\int_Z\omega^{2n-p}$  and $0<q_X(\omega)$ we get that $c_{\gamma}>0$; thus~\eqref{positivo} follows from~\eqref{mina}.
Now apply Demailly-Paun's version of the Nakai-Moishezon ampleness criterion~\cite{dempau}: $\cK_{X_t}$ is a connected component of the set $P(X_t)\subset H^{1,1}_{\RR}(X_t)$ of  classes $\alpha$ such that $\int_{Z}\alpha^{2n-p}>0$ for all closed analytic subsets $Z\subset X_t$ (here $p=\cod(Z,X_t)$). Let $t$ be as in~\eqref{fuorida}. 
By~\eqref{positivo} $P(X_t)=\cC_{X_t}\coprod(-\cC_{X_t})$; since  $\cK_{X_t}\subset\cC_{X_t}$ we get the proposition.
\end{proof}
Huybrechts~\cite{huy} has proved that~\Ref{thm}{kalpos} gives   the following {\it projectivity criterion}.
\begin{thm}{\rm[Huybrechts]}\label{thm:procri}
A HK manifold $X$ is projective if and only if there exists a (holomorphic) line-bundle $L$ on $X$ such that $q_X(c_1(L))>0$.  
\end{thm}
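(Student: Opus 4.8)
The plan has two parts. The implication ``$X$ projective $\Rightarrow$ there is a line bundle $L$ with $q_X(c_1(L))>0$'' is immediate: a projective $X$ carries an ample $L$, and then $c_1(L)$ is a K\"ahler class, so $q_X(c_1(L))>0$ by Item~(2) of~\Ref{rmk}{quix}. For the converse, suppose $L$ is a line bundle with $q_X(c_1(L))>0$; write $2n:=\dim X$ and $\lambda:=c_1(L)$. Since $\{v\in H^{1,1}_\RR(X)\mid q_X(v)>0\}$ has exactly two connected components (Item~(3) of~\Ref{rmk}{quix}), after possibly replacing $L$ by $L^\vee$ I may assume $\lambda$ lies in the positive cone $\cC_X$. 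The idea is then to deform $X$ along the locus where $\lambda$ stays of type $(1,1)$ to a very general member on which $L$ becomes ample, and to transfer bigness of $L$ back to $X$.

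First I would choose a representative $f\colon\cX\to T$ of $Def(X)$ with $T$ a contractible polydisk and $X_0\cong X$. By local Torelli the period map identifies $T$ biholomorphically with an open subset of the quadric $Q:=\{[v]\in\PP(H^2(X;\CC))\mid q_X(v)=0\}$, and $\lambda$ is of type $(1,1)$ on $X_t$ exactly when the period point lies on the polar hyperplane $\PP(\lambda^\perp)$. As $q_X(\lambda)>0$ the point $[\lambda]$ is not on $Q$, so $\PP(\lambda^\perp)$ meets $Q$ transversally; hence $T_\lambda:=\{t\in T\mid \text{$\lambda$ is of type $(1,1)$ on $X_t$}\}$ is a smooth hypersurface through $0$. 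Since $\lambda$ remains of type $(1,1)$ over $T_\lambda$ and $T_\lambda$ is contractible, $L$ extends to a line bundle $\cL$ on the restricted family $\cX|_{T_\lambda}$ with $\cL_0\cong L$; put $\lambda_t:=c_1(\cL_t)$.

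The crux is to show that $\cL_t$ is ample for $t\in T_\lambda$ very general. For such $t$ the N\'eron--Severi lattice of $X_t$ has rank one, generated by a multiple of $\lambda_t\in\cC_{X_t}$, so it suffices to prove $\cK_{X_t}=\cC_{X_t}$. For this I would rerun the proof of~\Ref{thm}{kalpos}, now for the family $\cX|_{T_\lambda}\to T_\lambda$: for very general $t$ every closed analytic subset $Z\subset X_t$ deforms over all of $T_\lambda$, so its class stays of type $(\cod Z,\cod Z)$ along $T_\lambda$; a version of~\Ref{prp}{piatta} adapted to this polarized situation then gives $\int_Z\alpha^{\cod Z}>0$ for all $\alpha\in\cC_{X_t}$, and Demailly-Paun's Nakai-Moishezon criterion yields $\cK_{X_t}=\cC_{X_t}$. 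In particular $\lambda_t$ is an integral K\"ahler class, so $X_t$ is projective and $\cL_t$ is ample by Kodaira. I expect this to be the main obstacle: \Ref{thm}{kalpos} as stated deals with \emph{full} deformation families, while $\cX|_{T_\lambda}\to T_\lambda$ is one dimension short, so one must carry the fixed class $\lambda$ through its proof, the real work being the polarized analogue of~\Ref{prp}{piatta} needed to feed the Demailly-Paun criterion.

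It remains to descend to $X$. Since $\cL_t$ is ample and $K_{X_t}$ is trivial, Kodaira vanishing gives $h^i(X_t,\cL_t^{\otimes m})=0$ for $i>0$ and $m\ge 1$, hence $h^0(X_t,\cL_t^{\otimes m})=\chi(X_t,\cL_t^{\otimes m})=\chi(X,L^{\otimes m})$, the Euler characteristic being a deformation invariant. Now $\chi(X,L^{\otimes m})=\int_X e^{m\lambda}\,\mathrm{td}(X)$ is a polynomial in $m$ whose coefficient of $m^{2n}$ equals $\tfrac{1}{(2n)!}\int_X\lambda^{2n}=\tfrac{c_X}{n!\,2^n}q_X(\lambda)^n>0$ by~\eqref{belleq}, so it has degree exactly $2n$ and is positive for $m\gg 0$. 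As $t\mapsto h^0(X_t,\cL_t^{\otimes m})$ is upper-semicontinuous and the very general points are dense in $T_\lambda$, the closed set $\{t\mid h^0(X_t,\cL_t^{\otimes m})\ge\chi(X,L^{\otimes m})\}$ contains a dense set and is therefore all of $T_\lambda$; evaluating at $0$ gives $h^0(X,L^{\otimes m})\ge\chi(X,L^{\otimes m})$ for $m\gg 0$. Hence $L$ has Iitaka dimension $2n=\dim X$, i.e.\ $L$ is big and $X$ is Moishezon; being also K\"ahler, $X$ is projective by Moishezon's theorem.
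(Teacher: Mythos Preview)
The paper does not prove this theorem: it merely asserts that Huybrechts derives it from~\Ref{thm}{kalpos} and refers to~\cite{huy} (and its Erratum). Your outline is exactly Huybrechts' strategy, and you have correctly isolated the one non-formal step.

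The gap you flag is genuine and is the real content of the argument. \Ref{thm}{kalpos} is stated for the \emph{full} deformation family, and its proof feeds~\Ref{prp}{piatta}, which needs $\gamma$ to stay of type $(p,p)$ over all of $Def(X)$. One cannot simply intersect the ``very general'' locus of~\Ref{thm}{kalpos} with the hypersurface $T_\lambda$: over $T_\lambda$ there are new Hodge classes (e.g.\ $\lambda^2$) whose Hodge locus is exactly $T_\lambda$, so the bad loci in the proof of~\Ref{thm}{kalpos} may contain $T_\lambda$. What one needs is a polarized analogue of~\Ref{prp}{piatta}: if $\gamma$ stays of type $(p,p)$ along $T_\lambda$ then $\int_X\gamma\wedge\alpha^{2n-p}$ is a polynomial in $q_X(\alpha)$ and $(\lambda,\alpha)_X$. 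This follows from the same density argument, now using that the periods over $T_\lambda$ are Zariski-dense in $V(q_X)\cap\PP(\lambda^\perp)$; positivity on $\cC_{X_t}$ then follows by evaluating at a K\"ahler class, and Demailly--Paun finishes as you say. Huybrechts carries this out in the Erratum to~\cite{huy}; you are right that this is where the work lies.

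Your descent step is correct and clean: semicontinuity against the deformation-invariant $\chi$ gives $h^0(X,L^{\otimes m})\ge\chi(X,L^{\otimes m})$, the leading term $\tfrac{c_X}{n!\,2^n}q_X(\lambda)^n m^{2n}$ is positive by~\eqref{belleq}, hence $L$ is big, $X$ is Moishezon, and Moishezon plus K\"ahler gives projective.
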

Boucksom~\cite{bouck}, elaborating on ideas of Huybrechts, gave the following characterization of $\cK_X$ for  arbitary $X$. 
\begin{thm}{\rm[Boucksom]}
Let $X$ be a HK manifold. A class $\alpha\in H^{1,1}_{\RR}(X)$ is K\"ahler if and only if it belongs to the positive cone $\cC_X$ and moreover $\int_C\alpha>0$ for every rational curve $C$\footnote{A curve is rational if it is irreducible and its normalization is rational}.
\end{thm}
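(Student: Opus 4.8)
The plan is to establish both implications separately. The easy direction is that a K\"ahler class $\alpha$ lies in $\cC_X$ and satisfies $\int_C\alpha>0$ for every rational curve $C$: containment in $\cC_X$ is immediate from Item~(2) of \Ref{rmk}{quix}, and positivity on curves is the defining property of K\"ahler classes (a K\"ahler form restricted to any curve has positive integral). So the content is the converse: if $\alpha\in\cC_X$ and $\int_C\alpha>0$ for all rational curves $C$, then $\alpha$ is K\"ahler.

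For the converse I would use the Demailly-Paun Nakai-Moishezon criterion, exactly as in the proof of \Ref{thm}{kalpos}: a class in $\cC_X$ is K\"ahler provided $\int_Z\alpha^{\dim Z}>0$ for every irreducible closed analytic subset $Z\subsetneq X$ of positive dimension. The task is therefore to deduce these higher-codimension positivity statements from the single hypothesis on rational curves together with $\alpha\in\cC_X$. The key tool is \Ref{prp}{piatta} applied in the deformation family: one first treats $X$ very general in its deformation class, where by \Ref{thm}{kalpos} one has $\cK_{X}=\cC_X$, and then propagates the conclusion. More precisely, for a general deformation every analytic subvariety has Poincar\'e dual remaining of type $(p,p)$, so \Ref{prp}{piatta} gives $\int_Z\alpha^{2n-p}=c_\gamma q_X(\alpha)^{n-p/2}$ with $c_\gamma>0$ (the sign fixed by testing against a K\"ahler class, as in the proof of \Ref{thm}{kalpos}); hence every class in $\cC_X$ already satisfies the Nakai-Moishezon inequalities for subvarieties of codimension $\geq 2$, and the only constraint that can fail is the one coming from divisors and, dually, from curves. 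This reduces the problem to controlling the wall-and-chamber structure of $\cC_X$ cut out by rational curves.

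The heart of the argument is then deformation-theoretic: one shows that the faces of $\cK_X$ inside $\cC_X$ are governed by (rational) curves. Concretely, if $\alpha\in\cC_X$ is not K\"ahler, one wants to produce a rational curve $C$ with $\int_C\alpha\le 0$. I would move $X$ in a family $\cX\to T$ so that $\alpha$ stays of type $(1,1)$; along a very general such deformation $\alpha$ becomes K\"ahler by \Ref{thm}{kalpos}, so the failure of $\alpha$ to be K\"ahler on $X$ itself must be detected by an effective cycle appearing in the special fibre. A Hodge-theoretic/MMP-type argument — using that extremal contractions of HK manifolds, or more generally the boundary of the K\"ahler cone, are controlled by uniruled (hence rational-curve-covered) loci, together with the structure of the B-B form of index $(1,b_2-3)$ on $H^{1,1}_\RR$ — yields that the relevant cycle is covered by rational curves, and one of these rational curves $C$ satisfies $\int_C\alpha\le0$, contradicting the hypothesis.

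The main obstacle is precisely this last step: showing that the boundary faces of the K\"ahler cone are always cut out by \emph{rational} curves rather than merely by arbitrary effective curve classes. This is where Boucksom's work genuinely goes beyond the soft arguments, and it requires the deformation-invariance machinery for HK manifolds (the fact that a curve class negative on some K\"ahler-type class can be deformed and remains effective, together with bend-and-break to produce rationality) — essentially combining \Ref{thm}{kalpos}, \Ref{prp}{piatta}, Huybrechts' deformation results, and Mori-theoretic input. The remaining steps (the Demailly-Paun reduction and the codimension $\geq2$ positivity via \Ref{prp}{piatta}) are comparatively routine.
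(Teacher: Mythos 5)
The paper itself offers no proof of this statement: it is quoted from Boucksom's paper \cite{bouck} as a known result, so there is nothing internal to compare your argument against. Judged on its own terms, your proposal correctly identifies the framework (the Demailly--Paun criterion, \Ref{thm}{kalpos}, and \Ref{prp}{piatta}), and the ``only if'' direction is indeed immediate. But the ``if'' direction as you present it has two genuine problems. First, the intermediate reduction is wrong: you claim that any $\alpha\in\cC_X$ automatically satisfies the Nakai--Moishezon inequalities for subvarieties of codimension $\ge 2$, by applying \Ref{prp}{piatta} to their Poincar\'e duals. That proposition only applies to classes that remain of type $(p,p)$ under \emph{all} deformations of $X$; for the specific, non-generic $X$ in question, an analytic subvariety need not have such a class. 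The Lagrangian $Z\cong\PP^2$ inside a HK $4$-fold of Subsection~\ref{chirurgia} is exactly such an example: its class does not stay of type $(2,2)$, $\int_Z\alpha^2$ is not of the form $c\,q_X(\alpha)$, and in the Mukai-flop situation there are classes in $\cC_X$ with $\int_Z\alpha^2\le 0$. So codimension-$\ge 2$ subvarieties genuinely obstruct K\"ahlerness, and the problem does not reduce to divisors and curves in the way you assert; the content of Boucksom's theorem is precisely that all these obstructions are ultimately detected by rational curves (e.g.\ the line in $Z$).

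Second, the step you yourself flag as ``the main obstacle'' --- producing, for a non-K\"ahler $\alpha\in\cC_X$, a rational curve $C$ with $\int_C\alpha\le 0$ --- is not an obstacle to be noted but the entire theorem; invoking ``MMP-type arguments,'' ``bend-and-break,'' and ``uniruled loci'' without carrying them out leaves the proof with no actual content beyond the trivial direction. Boucksom's argument (building on Huybrechts' description of the K\"ahler cone via classes positive on all curves, and on the structure of the modified nef cone / divisorial Zariski decomposition) is what fills exactly this hole, and none of it is reconstructed here. As written, the proposal is a plausible outline of where the difficulty lies rather than a proof.
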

 One would like to have a numerical description of the K\"ahler (or ample) cone as in the $2$-dimensional case. Hassett and Tschinkel~\cite{hasstsch2} proved the following result.
 \begin{thm}{\rm[Hassett - Tschinkel]}\label{thm:ampiose}
Let $X$ be a HK variety deformation equivalent to $K3^{[2]}$ and $L_0$ an ample line-bundle on $X$.  Let $L$ be a line-bundle on $X$ such that $c_1(L)\in\cC_X$.
Suppose that $(c_1(L),\alpha)_X>0$ for all $\alpha\in H^{1,1}_{\ZZ}(X)$ such that 
$(c_1(L_0),\alpha)_X>0$ and
\begin{itemize}
\item[(a)] 
$q_X(\alpha)=-2$ or
\item[(b)]
$q_X(\alpha)=-10$ and $(\alpha,H^2(X;\ZZ))_X=2\ZZ$.
\end{itemize}
Then $L$ is ample.
\end{thm}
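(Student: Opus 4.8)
The strategy is to deduce the Hassett--Tschinkel ampleness criterion from Huybrechts' description of the K\"ahler cone together with a deformation argument that identifies exactly which classes can bound the K\"ahler cone from inside $\cC_X$. By Boucksom's theorem, $L$ is ample iff $c_1(L)\in\cC_X$ (which is assumed) and $\int_C c_1(L)>0$ for every rational curve $C\subset X$. So the entire content is: for a HK fourfold $X$ deformation equivalent to $K3^{[2]}$, the classes $[C]\in H_2(X;\ZZ)$ of rational curves that can appear on the closure of a wall of the K\"ahler cone are governed, via the Beauville--Bogomolov pairing, precisely by the numerical conditions (a) and (b). One first passes to the primitive wall classes: if $\alpha\in H^{1,1}_\ZZ(X)$ satisfies $(c_1(L),\alpha)_X\le 0$ while $(c_1(L_0),\alpha)_X>0$, one wants to show $\alpha$ (or a multiple) is the B-B dual of an effective curve class that is a sum of rational curve classes, and that $q_X(\alpha)\in\{-2,-10\}$ with the stated divisibility in the $-10$ case.

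**Key steps.** First, reduce to the very general member of a deformation family: choose a family $\cX\to T$ through $X$ and through a HK manifold with no extra $(1,1)$ classes (possible since deformations are unobstructed and $h^{1,1}_\ZZ=0$ generically). Using Huybrechts' \textbf{Theorem~3.3} (\texttt{thm:kalpos}), on the very general fiber $\cK=\cC$, and then track how walls appear as one specializes back to $X$: walls of $\cK_X$ are cut out by classes orthogonal (w.r.t.\ $q_X$) to extremal rational curve classes, and these curve classes are Gauss--Manin parallel only along sublocus of $T$. Second, invoke the structure theory of extremal rays / Mori cones for HK fourfolds of $K3^{[2]}$ type: an extremal rational curve $C$ has $q_X([C]^\vee)\in\{-2,-10\}$, where $[C]^\vee$ is the B-B dual class, and in the $q=-10$ case the dual class has divisibility $2$ in $H^2(X;\ZZ)$. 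This is the arithmetic heart and is exactly the Hassett--Tschinkel input: the possible "$(-2)$-classes" and "$(-10)$-classes" exhaust the wall-defining classes. Third, combine: since $c_1(L)\in\cC_X$ and $(c_1(L),\alpha)_X>0$ for every class $\alpha$ of type (a) or (b) with $(c_1(L_0),\alpha)_X>0$, the class $c_1(L)$ lies on the correct side of every wall of $\cK_X$ that $L_0$ lies on; since $L_0$ is ample it lies in $\cK_X$, hence so does $c_1(L)$, which gives $\int_C c_1(L)>0$ for all rational curves $C$, and Boucksom's theorem finishes.

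**The main obstacle.** The crux is Step~2: showing that \emph{every} wall of the K\"ahler cone of a $K3^{[2]}$-type fourfold is defined by a class of type (a) or (b), and that these are the \emph{only} possibilities. This requires the classification of extremal contractions (Mukai flops of planes, and the more exotic contraction whose exceptional locus gives the $q=-10$, divisibility-$2$ class) for this deformation type, which in turn rests on deformation-theoretic monodromy arguments and the lattice-theoretic analysis of $H^2(S^{[2]};\ZZ)\cong U^3\widehat\oplus E_8\la-1\ra^2\widehat\oplus\la-2\ra$ from \eqref{rethilb}. One must rule out, using Verbitsky's \textbf{Theorem~3.1} (\texttt{thm:spracht}) and \textbf{Corollary~3.2} (\texttt{crl:ventitre}) to control $H^4$ and the Fujiki relation \eqref{belleq} to compute intersection numbers of the exceptional loci, that any other value of $q_X(\alpha)$ or any other divisibility can produce a wall. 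The deformation step (moving rational curves in families over $T$, via the existence of twistor families and unobstructedness) is what lets one reduce the general $K3^{[2]}$-type fourfold to the explicit Hilbert scheme case where the contractions can be exhibited by hand, e.g.\ the flop in diagram~\eqref{contraggo} and the non-isomorphism $\phi$ of \eqref{noniso}.
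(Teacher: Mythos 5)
The paper does not prove this theorem: it is quoted from Hassett--Tschinkel \cite{hasstsch2}, and the surrounding text only explains where conditions (a) and (b) come from (Riemann--Roch for $(-2)$-classes, Lagrangian planes $\PP^2\subset X$ and their lines for the $(-10)$-classes of divisibility $2$), i.e.\ it motivates the \emph{converse}, not the stated sufficiency. So there is no in-paper argument to compare yours against, and your proposal has to stand on its own.

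As it stands it does not: your Step~2 is circular. You reduce, via Boucksom's criterion, to showing that every rational curve $C$ on which some class of $\cC_X$ is nonpositive has Beauville--Bogomolov dual $[C]^\vee$ proportional to a class with $q_X=-2$, or $q_X=-10$ and divisibility $2$ --- and then you invoke this as ``the structure theory of extremal rays\dots exactly the Hassett--Tschinkel input.'' That classification \emph{is} the theorem; labelling it ``the arithmetic heart'' and ``the main obstacle'' while deferring it to the result being proved leaves the proof empty. Three further steps are asserted without argument and are each nontrivial: (i) that rational curves (or the uniruled subvarieties they sweep out) persist under deformation of $X$ exactly along the locus where their class stays of type $(1,1)$ --- this is a genuine theorem about deformations of rational curves on HK manifolds, not a formal consequence of Gauss--Manin parallel transport; (ii) that curves with $q_X([C]^\vee)\ge 0$ cannot obstruct ampleness of a class in $\cC_X$ (true, by the signature $(1,b_2-3)$ of $q_X$ on $H^{1,1}_{\RR}$, but it needs saying); and (iii) that positivity on the extremal classes implies positivity on \emph{all} rational curve classes, i.e.\ that the relevant cone of curves is generated by the duals of the $(-2)$- and $(-10)$-classes together with the closure of the dual positive cone. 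Your final convex-geometry step (using $L_0$ to orient the walls and conclude $c_1(L)\in\cK_X$) is fine, but everything feeding into it is missing. A correct write-up would have to reproduce the content of \cite{hasstsch2}: the deformation theory of the pair (fourfold, extremal rational curve), the computation via \eqref{belleq} and \Ref{crl}{ventitre} of the self-intersection constraints on exceptional loci, and the lattice-theoretic elimination of all other values of $q_X(\alpha)$ and divisibilities.
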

Hassett and Tschinkel~\cite{hasstsch1} conjectured that  the converse of the above theorem holds i.e.~the above conditions are also necessary for $L$ to be ample.   
We explain the appearance of the conditions in the above theorem and why one expects that the converse holds. We start with Item~(a). Let $X$ be a HK manifold deformation equivalent to $K3^{[2]}$ and $L$ a line-bundle on $X$: Hirzebruch-Riemann-Roch for $X$ reads 
\begin{equation}\label{rrhilbsq}
\chi(L)=\frac{1}{8} (q(L)+4)(q(L)+6).
\end{equation}
(We let $q=q_X$.) It follows that $\chi(L)=1$ if and only if $q(L)=-2$ or $q(L)=-8$. 
\begin{cnj}{\rm [Folk?]}\label{cnj:diveff}
 Let $X$ be a HK manifold deformation equivalent to $K3^{[2]}$. Let $L$ be a line-bundle on $X$ such that $q_X(L)=-2$.
 \begin{itemize}
\item[(1)]
If $(c_1(L),H^2(X;\ZZ))_X=\ZZ$  then either $L$ or $L^{-1}$ has a non-zero section. 
\item[(2)]
If $(c_1(L),H^2(X;\ZZ))_X=2\ZZ$  then either $L^2$ or $L^{-2}$ has a non-zero section. (Notice that $q_X(L^{\pm 2})=-8$.)
\end{itemize}
\end{cnj}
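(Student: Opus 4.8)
The plan is to combine Hirzebruch-Riemann-Roch with the deformation theory of $X$ and with the Global Torelli theorem for deformations of $K3^{[2]}$ (together with Markman's computation of the monodromy group), reducing the statement to the existence of well-understood prime divisors on a model $S^{[2]}$. The numerical input is the following. By~\eqref{rrhilbsq}, $q_X(L)=-2$ gives $\chi(X,L)=1$, and in case~(2) also $q_X(L^{\otimes 2})=-8$ gives $\chi(X,L^{\otimes 2})=1$; since $\dim_{\CC}X=4$ and $\omega_X\cong\cO_X$, Serre duality yields $h^i(X,L)=h^{4-i}(X,L^{-1})$, whence
\[
h^0(X,L)+h^0(X,L^{-1})=1+h^1(X,L)+h^1(X,L^{-1})-h^2(X,L),
\]
and similarly for $L^{\otimes 2}$ in case~(2). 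This is exactly what settles the analogous assertion for $K3$ surfaces, but in dimension four it is not enough, because $H^2(X,L)$ need not vanish: for $X=S^{[2]}$ and $c_1(L)=\xi_2$ both $L$ and $L^{-1}$ fail to be effective (by Item~(4) of~\Ref{rmk}{quix} and the primeness of $\Delta_2$), yet $L^{\otimes 2}\cong\cO_{S^{[2]}}(\Delta_2)$ is effective for the purely geometric reason that the boundary divisor exists. So genuine geometry, not only Euler characteristics, must enter.

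Next I would reduce to the case in which $X$ is projective. Choose a representative $\cX\to T$ of a family with $T$ irreducible, carrying a line bundle $\cL$ with $\cL|_X=L$ and with $c_1(L)$ of type $(1,1)$ along all of $T$ — possible because $q_X(L)=-2$ makes the locus where $c_1(L)$ stays algebraic a hypersurface of the period domain through the period of $X$. Gauss-Manin parallel transport is an isometry of $H^2(-;\ZZ)$, so every fibre lies in case~(1) (resp.~case~(2)) together with $X$. The locus
\[
\{t\in T\mid h^0(\cX_t,\cL_t)\ge 1\ \text{or}\ h^0(\cX_t,\cL_t^{-1})\ge 1\}
\]
(and its analogue with $\cL_t^{\pm 2}$ in case~(2)) is Zariski-closed by upper semicontinuity of $h^0$, whereas the projective fibres are dense in $T$ (by~\Ref{thm}{procri} a fibre is projective as soon as $\mathrm{NS}$ contains a class of positive square, and the loci in $T$ where this occurs are dense). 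Hence it suffices to prove the conjecture for every projective fibre.

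For $X$ projective I would invoke the Global Torelli theorem for $K3^{[2]}$-type manifolds together with Markman's description of the monodromy group, which by Eichler's criterion acts transitively on the primitive vectors of the $K3^{[2]}$ lattice $U^3\widehat{\oplus}E_8\la-1\ra^2\widehat{\oplus}\la-2\ra$ of any fixed square and divisibility. Deforming $X$ through projective fibres and composing with birational maps of HK manifolds — each an isomorphism in codimension $\ge 2$, hence preserving (anti)effectivity of divisors by Item~(5) of~\Ref{rmk}{quix} — one matches $(X,c_1(L))$ with $(S^{[2]},\beta_0)$ for a suitable $K3$ surface $S$, where $\beta_0$ is the standard class of its numerical type: in case~(1) the class of the prime divisor $\{[Z]\mid\supp(Z)\cap C\ne\es\}$ attached to a smooth rational $(-2)$-curve $C\subset S$, and in case~(2) the class $\xi_2$, for which $2\xi_2=c_1(\cO_{S^{[2]}}(\Delta_2))$. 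Thus $\beta_0$ (resp.~$2\beta_0$) is effective, and transporting back produces a non-zero section of $L$ or $L^{-1}$ (resp.~of $L^{\otimes 2}$ or $L^{\otimes-2}$), the sign ambiguity reflecting the two components of the relevant period domain.

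The hard part is the legitimacy of this last transport of (anti)effectivity: a monodromy operator is not induced by a morphism, so a divisor cannot be pulled back along it directly. One must instead realize Markman's monodromy generators as a composition of deformations through projective fibres — controlled by the closedness/density argument above — and of birational maps of HK manifolds, while tracking the $\pm$ on the sublattice of rank $\le 2$ spanned by $c_1(L)$ and an ample class. Equivalently, the crux is the intrinsic geometric statement that on every projective $K3^{[2]}$-type manifold a primitive class $\beta\in\mathrm{NS}(X)$ with $q_X(\beta)=-2$ has $\pm\beta$ (when $\beta$ has divisibility one) or $\pm2\beta$ (when $\beta$ has divisibility two) represented by a prime exceptional divisor, the associated contraction being divisorial — of the type $S^{[2]}\to S^{(2)}$ in the divisibility-two case. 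This is the precise $K3^{[2]}$-analogue of the classification of $(-2)$-curves on a $K3$ surface, and lies at the same depth as the converse of~\Ref{thm}{ampiose} conjectured by Hassett and Tschinkel.
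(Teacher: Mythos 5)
The first thing to say is that the statement you are proving is labelled a \emph{conjecture} in the paper, and the paper gives no unconditional proof of it. What it does prove, in Subsection~\ref{torhilbsq}, is that \Ref{ass}{torass} (Naive Global Torelli for deformations of $K3^{[2]}$) implies Item~(2), by essentially the route you sketch: for a divisibility-two class with $q_X(L)=-2$ one has $H^2(X;\ZZ)=c_1(L)^{\bot}_{\ZZ}\oplus\ZZ c_1(L)$, so $c_1(L)^{\bot}_{\ZZ}$ is even unimodular of signature $(3,19)$, hence the $K3$ lattice; surjectivity of the $K3$ period map together with~\eqref{comhilb} and~\eqref{rethilb} produces a $K3$ surface $S$ and an integral Hodge isometry $H^2(S^{[2]})\cong H^2(X)$ sending $\xi_2$ to $\pm c_1(L)$; Global Torelli then yields a bimeromorphic map $f\colon X\dashrightarrow S^{[2]}$, and $f^{-1}_{*}\Delta_2$ gives the section of $L^{\pm 2}$ via~\eqref{doppio}; finally one passes from the very general to the arbitrary member of $Def(X,L)$ by density of $T_{gen}$ and upper semicontinuity of $h^0$. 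Your proposal is this same argument, and you correctly locate its weak point yourself: everything hinges on Global Torelli plus Markman's monodromy computation (equivalently, on the intrinsic statement that every primitive algebraic $(-2)$-class is, up to sign and the appropriate multiple, represented by a prime exceptional divisor), and none of this is established in the paper --- \Ref{ass}{torass} is explicitly an \emph{assumption} there, which is exactly why the statement is a conjecture rather than a theorem. So what you have written is a correct and well-motivated reduction, not a proof; the genuine gap is the unproved Torelli/monodromy input, which lies at the same depth as the statement itself.

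A second, more specific problem concerns Item~(1). Your reduction to $(S^{[2]},\beta_0)$ with $\beta_0$ the class of the divisor attached to a smooth rational curve $C\subset S$ does not go through as directly as in Item~(2): when $c_1(L)$ has divisibility one and square $-2$, the lattice $c_1(L)^{\bot}\cap H^2(X;\ZZ)$ is not unimodular, $H^2(X;\ZZ)$ does not split off $\ZZ c_1(L)$ orthogonally, and the very general $X$ with $H^{1,1}_{\ZZ}(X)=\ZZ c_1(L)$ is \emph{not} bimeromorphic to any Hilbert square (that would force $\xi_2$, a second independent integral class, to be of type $(1,1)$). The paper's own remark following its conditional claim makes precisely this point and predicts a ``less elementary'' argument for the divisibility-one case: one must exhibit the divisor on some special projective member of the family --- e.g.\ the example $D=\{[Z]\mid Z\cap C\neq\es\}$ on $S^{[2]}$ --- and then propagate effectivity by a deformation and stability argument, rather than matching the given $X$ with a Hilbert square. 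Your monodromy-plus-birational-maps transport would have to be reorganised around this; as you note, that is where the real content lies, and it is not supplied.
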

If the above conjecture holds then given $\alpha\in H^{1,1}_{\ZZ}(X)$ with $q_X(\alpha)=-2$ we have that either $(\alpha,\cdot)_X$ is strictly positive or strictly negative on $\cK_X$; in particular the condition corresponding to Item~(a) of~\Ref{thm}{ampiose} is necessary for a line-bundle to be ample. 
Below are examples of line-bundles satisfying Items~(1), (2) above. 
  \begin{itemize}
\item[Ex.~1]
  Let $S$ be a $K3$ containing a smooth rational curve $C$ and $X=S^{[2]}$. Let
\begin{equation}
D:=\{[Z]\in S^{[2]}\mid Z\cap C\not=\es\}.
\end{equation}
Let $L:=\cO_X(D)$; then $c_1(L)=\wt{\mu}_2(c_1(\cO_S(C)))$ where $\wt{\mu}_2$ is given by~\eqref{simcom}.
Since $\wt{\mu}_2$ is an isometry we have $q_X(L)=C\cdot C=-2$ and moreover $(c_1(L),H^2(X;\ZZ))_X=\ZZ$. For another example see Item~(5) of~\Ref{rmk}{fibcon}
\item[Ex.~2]
Let $S$ be a $K3$  and $X=S^{[2]}$. Let $L_2$ be the square-root of $\cO_X(\Delta_2)$ where $\Delta_2\subset S^{[2]}$ is the divisor parametrizing non-reduced subschemes - thus $c_1(L_2)=\xi_2$. Then $q(L_2)=-2$ and $L_2^{2}$ has \lq\lq the\rq\rq non-zero section vanishing on $\Delta_2$. Notice that neither $L_2$ nor $L_2^{-1}$ has a non-zero section.
\end{itemize}
Summarizing: line-bundles of square $-2$ on a HK deformation of $K3^{[2]}$ should be similar to $(-2)$-classes on a K3. (Recall that if $L$ is  a line-bundle on a $K3$ with $c_1(L)^2=-2$ then by  Hirzebruch-Riemann-Roch and Serre duality either $L$ or $L^{-1}$ has a non-zero section.)       Next we explain Item~(b) of~\Ref{thm}{ampiose}. 
 Suppose that $X$ is a HK deformation of  $K3^{[2]}$ and that $Z\subset X$ is a closed submanifold isomorphic to $\PP^2$ - see Section~\ref{chirurgia}. Let $C\subset Z$ be a line. Since $(,)_X$ is non-degenerate (but not unimodular !) there exists $\beta\in H^2(X;\QQ)$ such that 
 \begin{equation}
 \int_C\gamma=(\beta,\gamma)_X\qquad\forall \gamma\in H^2(X).
\end{equation}
One proves that 
\begin{equation}\label{cinmez}
q_X(\beta)=-\frac{5}{2}.
\end{equation}
Equation~\eqref{cinmez} follows from Isomorphism~\eqref{prodsim} and the good properties of deformations of HK manifolds, see~\cite{hasstsch2}, Sect.~4. Since $(\beta,H^2(X;\ZZ))_X=\ZZ$ and the discriminant of $(,)_X$ is $2$ we have $2\beta\in H^2(X;\ZZ)$; thus $\alpha:=2\beta$ is as in Item~(b) of~\Ref{ampiose} and if $L$ is ample then $0<\int_C c_1(L)=\frac{1}{2}(c_1(L),\alpha)_X$.

\vskip  2mm
\n
Hassett and Tschinkel state conjectures that extend~\Ref{thm}{ampiose} and its converse to general HK varieties, see~\cite{hasstsch3} - in particular they give a conjectural numerical description of the effective cone of a HK variety. The 
papers~\cite{bouck2,druel} contain key results in this circle of ideas.

\vskip  2mm
\n
We close the section by stating a beautiful result of Huybrechts~\cite{huykahler} - the proof is based on results on the K\"ahler cone and uses in an essential way the existence of the  twistor family.
\begin{thm}\label{thm:birdef}
 Let $X$ and $Y$ be bimeromorphic HK manifolds. Then $X$ and $Y$ are deformation equivalent.
\end{thm}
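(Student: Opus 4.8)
The plan is to reduce the problem, via the diagram and remarks already established, to a statement about K\"ahler cones in a family, and then to run a twistor-deformation argument. First I would use Item~(5) of \Ref{rmk}{quix}: a bimeromorphic map $f\colon X\dashrightarrow Y$ between HK manifolds induces a Hodge isometry $f^{*}\colon H^2(Y;\ZZ)\overset{\sim}{\lra} H^2(X;\ZZ)$, since $f$ is an isomorphism away from loci of codimension $\ge 2$. The goal is to realize this isometry as the parallel transport in some (possibly reducible, but connected) family $\cX\to T$ whose fibers include both $X$ and $Y$; once that is done, $X$ and $Y$ are deformation equivalent by definition. The key geometric input is that $f$ is actually an isomorphism in codimension one, so $X$ and $Y$ share the same "birational model" data — same positive cone $\cC_X=\cC_Y$ under $f^{*}$, same Beauville--Bogomolov lattice, and the same Hodge structure.

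The heart of the argument is deformation to a very general member. Let $\cX\to T$ be a representative of $\Def(X)$ with $T$ irreducible; by \Ref{thm}{kalpos} the very general fiber $X_t$ satisfies $\cK_{X_t}=\cC_{X_t}$, so in particular $X_t$ has no nontrivial birational models arising from contractible submanifolds. The strategy is: the birational map $f$ deforms along a subfamily. More precisely, I would consider the "twistor lines" through $X$ and $Y$. Fix a K\"ahler class $\omega_X$ on $X$ and let $\omega_Y$ be a K\"ahler class on $Y$; these need not correspond under $f^{*}$, but both lie in $\cC_X=\cC_Y$. The twistor family $\cT(\omega_X)\to \PP^1$ associated to a Calabi--Yau metric in $\omega_X$ is a deformation of $X$ whose periods sweep out a conic in $\PP(H^2(X))$; similarly for $\omega_Y$. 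Because a very general member of a twistor family has Picard number zero (the period leaves $\PP(\NS)$ for all $\NS$), a very general member of $\cT(\omega_X)$ and a very general member of $\cT(\omega_Y)$ are \emph{isomorphic} HK manifolds — a birational map between HK manifolds with trivial Picard group is an isomorphism, since there are no contractible hypersurfaces or flopping loci. Chaining $X \rightsquigarrow (\text{very general in } \cT(\omega_X)) \cong (\text{very general in } \cT(\omega_Y)) \rightsquigarrow Y$ through these families exhibits a connected family linking $X$ to $Y$, proving deformation equivalence.

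The main obstacle is the step asserting that a birational map between HK manifolds of Picard number zero is an isomorphism, and more precisely controlling how $f$ itself propagates to the twistor families: a priori $f$ is only defined on $X$, not on nearby fibers, so one must show the twistor deformation of $X$ in the class $f^{*}\omega_Y$ (which is a class in $\cC_X$, hence K\"ahler on a very general small deformation by \Ref{thm}{kalpos}, but need not be K\"ahler on $X$ itself) is isomorphic to the twistor deformation of $Y$ in $\omega_Y$. This is exactly where one needs: (i) the twistor family is determined up to the choice of a K\"ahler class and the complex-geometric data of the fiber; (ii) Hodge-theoretic comparison — since $f^{*}$ is a Hodge isometry sending $\omega_Y$ to $f^{*}\omega_Y$, the two twistor conics in $\PP(H^2)$ coincide after the identification $f^{*}$; and (iii) a very general point on this common conic has a unique HK realization up to isomorphism (Picard number zero rigidifies the birational class to a single isomorphism class). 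Assembling (i)--(iii) carefully, and checking that the resulting chain of families is connected and each link is a genuine flat family of HK manifolds, is the technical core; the lattice/Hodge bookkeeping from Items~(3) and~(5) of \Ref{rmk}{quix} and from \Ref{thm}{kalpos} supplies everything else.
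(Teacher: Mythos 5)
The paper does not actually prove this theorem---it is quoted from Huybrechts~\cite{huykahler} with only the remark that the proof uses the K\"ahler cone results and the twistor family---so I am measuring your sketch against Huybrechts' argument. Your overall strategy is the right one in outline: deform to very general nearby fibers, where by \Ref{thm}{kalpos} the K\"ahler cone equals the positive cone, and argue that there the bimeromorphic correspondence becomes biregular (the sub-claim that a bimeromorphic map between HK manifolds which pulls back some K\"ahler class to a K\"ahler class extends to an isomorphism is indeed a result of Huybrechts and is what one uses at the generic fiber). However, there are two genuine gaps. First, the twistor route does not line up: the twistor family of $(X,\omega_X)$ has periods sweeping the conic in $\PP(\la\sigma_X,\ov{\sigma}_X,\omega_X\ra)$, while that of $(Y,\omega_Y)$, transported by $f^{*}$, sweeps $\PP(\la\sigma_X,\ov{\sigma}_X,f^{*}\omega_Y\ra)$. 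These coincide only if $\omega_X$ and $f^{*}\omega_Y$ span the same positive $3$-plane with $\sigma_X$, and you cannot arrange this because $f^{*}\omega_Y$ need not be K\"ahler on $X$ (as you yourself note), so there is no twistor family of $X$ in that direction. Hence your point~(ii) fails and the very general members of your two twistor families have unrelated period points.

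Second, and more fundamentally, even after matching periods (which one does not do via twistor lines but by identifying $Def(X)$ with $Def(Y)$ through $f^{*}$ and local Torelli), nothing in your argument produces a bimeromorphic map between the corresponding general fibers $X_t$ and $Y_t$. Equal periods do not imply bimeromorphic---that is a Global Torelli statement, precisely the kind of assertion Section~\ref{toratora} of the paper treats as open or false in general---so your step~(iii) is circular. The missing idea, which is the technical heart of Huybrechts' proof, is to deform the \emph{cycle}: take the closure $\Gamma\subset X\times Y$ of the graph of $f$ together with the components lying over the indeterminacy loci, show that its class stays of type $(2n,2n)$ and effective on the identified deformations $X_t\times Y_t$ over a one-dimensional base, and prove that for generic $t$ the deformed cycle contains the graph of an isomorphism $X_t\overset{\sim}{\lra}Y_t$; this yields two families over a disk that are isomorphic away from the origin, whence deformation equivalence. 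Without this cycle-theoretic step your chain of families never actually connects $X$ to $Y$.
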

 \section{Complete families of HK varieties}
\setcounter{equation}{0}
A  couple $(X,L)$ where $X$ is a HK variety and $L$ is a primitive\footnote{i.e.~$c_1(L)$ is indivisibile in $H^2(X;\ZZ)$.} ample line-bundle on $X$ with $q_X(L)=d$ is 
a {\it HK variety of degree $d$}; an isomorphism $(X,L)\overset{\sim}{\lra}(X',L')$   between HK's of degree $d$ consists of an isomorphism $f\colon X\overset{\sim}{\lra} X'$ such that $f^{*}L'\cong L$.  A family of HK varieties of degree $d$ is a couple
\begin{equation}\label{famiglia}
(f\colon \cX\to T,\,\cL)
\end{equation}
 where $\cX\to T$ is a family of HK varieties deformation equivalent to a fixed HK manifold $X$ and $\cL$ is a line-bundle such that $(X_t, L_t)$ is a HK variety of degree $d$ for every $t\in T$ (here $X_t:=f^{-1}(t)$ and $L_t:=\cL|_{X_t}$) - we say that it is a family of HK varieties if we are not intersted in the value of $q_X(L_t)$. The  deformation space of $(X,L)$ is a codimension-$1$ smooth sub-germ $Def(X,L)\subset Def(X)$ with tangent space the kernel of Map~\eqref{ostia} with $\alpha= c_1(L)$. The family~\eqref{famiglia}  is {\it locally complete} if given any $t_0\in T$ the map of germs $(T,t_0)\to Def(X_{t_0},L_{t_0})$ is surjective, it is {\it globally complete} if given any HK variety  $(Y,L)$ of degree $d$ with $Y$ deformation equivalent to $X$ there exists $t_0\in T$ such that $(Y,L)\cong (X_{t_0},L_{t_0})$. In dimension $2$ i.e.~for $K3$ surfaces one has explicit globally complete families of low degree: If $d=2$ the family of double covers $S\to\PP^2$ branched over a smooth sextic will do\footnote{In order to get a global family we must go to a suitable double cover of the parameter space of sextic curves.}, if $d=4$ we may consider the family of  smooth quartic surfaces $S\subset\PP^3$  with the addition of certain \lq\lq limit\rq\rq surfaces (double covers of smooth quadrics and certain elliptic $K3$'s) corresponding to degenerate quartics (double quadrics and the surface swept out by tangents to a rational normal cubic curve respectively).  The list goes on for quite a few values of $d$, see~\cite{mukaiexpls,mukai13}  and then it necessarily stops - at least in this form - because moduli spaces of high-degree $K3$'s are not unirational~\cite{ghsk3}. We remark that in low degree one shows \lq\lq by hand\rq\rq that there exists a globally complete family which is irreducible; the same is true in arbitrary degree but I know of no elementary proof, the most direct argument is via Global Torelli. What is the picture in higher ($>2$) dimensions ? Four distinct (modulo obvious equivalence)  locally complete families of higher-dimensional HK varieties have been constructed - they are all deformations of $K3^{[2]}$. The families are the following:
\begin{itemize}
\item[(1)]
We constructed~\cite{og2} the family of double covers of certain special sextic  hypersurfaces in $\PP^5$ that we named EPW-sextics (they had been introduced by Eisenbud-Popescu-Walter~\cite{epw}). The polarization is the pull-back of  $\cO_{\PP^5}(1)$; its degree is $2$.
\item[(2)]
Let $Z\subset\PP^5$ be a smooth cubic hypersurface; Beauville and Donagi~\cite{beaudon} proved that the variety parametrizing lines on $Z$ is a deformation of $K3^{[2]}$. The polarization  is given by the Pl\"ucker embedding: it has degree $6$.
\item[(3)]
Let $\sigma$ be a generic $3$-form on $\CC^{10}$; Debarre and Voisin~\cite{debvoi} proved that the set $Y_{\sigma}\subset Gr(6,\CC^{10})$ parametrizing subspaces on which $\sigma$ vanishes is a deformation of $K3^{[2]}$. The polarization is given by the Pl\"ucker embedding: it has degree $22$.
\item[(4)]
Let $Z\subset\PP^5$ be a generic cubic hypersurface; Iliev and Ranestad~\cite{iliran1,iliran2} have proved that the variety of sums of powers $VSP(Z,10)$ \footnote{$VSP(Z,10)$ parametrizes $9$-dimensional linear spaces of  $|\cO_{\PP^5}(3)|$ which contain $Z$ and are $10$-secant to the Veronese $\{[L^3]\mid L\in (H^0(\cO_{\PP^5}(1))\setminus\{0\})\}$.} is a deformation of $K3^{[2]}$.  For the polarization we refer to~\cite{iliran2}; the degree is $38$ (unpublished computation by Iliev, Ranestad and Van Geemen). 
\end{itemize}
For each of the above families -  more precisely for the family obtained by adding \lq\lq limits\rq\rq - one might ask whether it is globally complete for HK varieties of the given degree which are deformations of $K3^{[2]}$. As formulated the answer is negative with the possible exception of our family, for a  trivial reason: in the lattice $L:=H^2(K3^{[2]};\ZZ)$ the orbit of a primitive vector $v$ under the action of $O(L)$ is determined by the value of the  B-B form $q(v)$ plus the extra information on whether 
 \begin{equation}
(v,L)=
\begin{cases}
\ZZ & or\\
2\ZZ & 
\end{cases}
\end{equation}
In the first case one says that the {\it divisibility of $v$} is $1$, in the second case that it is $2$; if the latter occurs then $q(v)\equiv 6 \pmod{8}$. Thus the divisibility of the polarization in Item~(1) above equals $1$; on the other hand it equals $2$ for the  families  in Item~(2)-(4). The correct question regarding  global completeness is the following. Let  $X$ be a HK deformation of $K3^{[2]}$ with an ample line-bundle $L$ such that either $q(L)=2$ or $q(L)\in\{6,22,38\}$ and the divisibility of $c_1(L)$ is equal to $2$: does there exist a variety $Y$ parametrized by one of the above families - or a limit of such -  and an isomorphism $(X,L)\cong (Y,\cO_Y(1))$ ? If a \lq\lq naive\rq\rq global Torelli holds for HK deformations of $K3^{[2]}$ then the answer is positive,  see~\Ref{clm}{sciarra}. 

None of the families above is as easy to construct as  are the families of low-degree $K3$ surfaces. There is the following Hodge-theoretic explanation. In order to get a locally complete family of varieties  one  usually constructs complete intersections (or sections of ample vector-bundles) in homogeneous varieties: by Lefschetz' hyperplane Theorem such a construction will never produce a higher-dimensional HK.  On the other hand the families of Items~(1), (2) and~(3) are related to complete intersections as follows (I do not know whether one may view the Iliev-Ranestad family from a similar perspective). First if $f\colon X\to Y$ is a double EPW-sextic (Item~(1) above) then $f$ is the quotient map of an involution $ X\to X$ which has one-dimensional $(+1)$-eigenspace on $H^2(X)$ - in particular it kills $H^{2,0}$ - and \lq\lq allows\rq\rq the quotient to be a hypersurface. Regarding Item~(2): let $Z\subset\PP^5$ be a smooth cubic hypersurface and $X$ the variety of lines on $Z$, the incidence correspondence in $Z\times X$ induces an isomorphism of the primitive Hodge structures $H^4(Z)_{pr}\overset{\sim}{\to} H^2(X)_{pr}$. Thus a Tate twist of $H^2(X)_{pr}$ has become the primitive intermediate cohomology  of a hypersurface. A similar comment applies to the Debarre-Voisin family (and there is a similar incidence-type construction of double EPW-sextics given by Iliev and Manivel~\cite{iliman}).

 In this section we will describe in some detail the family of double EPW-sextics and we will say a few words about analogies with the Beauville-Donagi  family. 
 \subsection{Double EPW-sextics, I}
\setcounter{equation}{0}
We start by giving the definition of
EPW-sextic~\cite{epw}.
Let $V$ be a $6$-dimensional complex vector space. We choose a volume-form $vol\colon\wedge^6 V\overset{\sim}{\lra}\CC$ and  we equip $\wedge^3 V$ with the symplectic form  
\begin{equation}\label{maitresse}
(\alpha,\beta)_V:=\vol(\alpha\wedge\beta). 
\end{equation}
Let $\lagr$ be the symplectic Grassmannian parametrizing Lagrangian subspaces of $\wedge^3 V$ - notice that $\lagr$ is independent of the chosen volume-form $vol$. 
Given a non-zero $v\in V$ we let 
\begin{equation}
F_v:=\{\alpha\in\wedge^3 V\mid v\wedge\alpha=0\}.
\end{equation}
Notice that $(,)_V$ is zero on $F_v$ and   $\dim(F_v)=10$ i.e.~$F_v\in \lagr$. Let 
\begin{equation}\label{eccoeffe}
F\subset\wedge^3 V\otimes\cO_{\PP(V)}
\end{equation}
be  the sub-vector-bundle with fiber $F_v$ over $[v]\in\PP(V)$. Given $A\in\lagr$ we let
\begin{equation}
Y_A=\{[v]\in\PP(V)\mid F_v\cap A\not=\{0\}\}.
\end{equation}
Thus $Y_A$ is the degeneracy locus of the map
\begin{equation}\label{diecidieci}
F\overset{\lambda_A}{\lra}(\wedge^3 V/A)\otimes\cO_{\PP(V)}
\end{equation}
where $\lambda_A$ is given by Inclusion~\eqref{eccoeffe} 
followed by the quotient map $\wedge^3 V\otimes\cO_{\PP(V)}\to 
(\wedge^3 V/A)\otimes\cO_{\PP(V)}$.  Since the vector-bundles appearing in~\eqref{diecidieci} have equal rank  $Y_A$ is the zero-locus of $\det\lambda_A\in H^0(\det F^{\vee})$ - in particular it has a natural structure of closed subscheme of $\PP(V)$. 
A straightforward computation gives that
$\det F\cong\cO_{\PP(V)}(-6)$ and hence $Y_A$ is a sextic hypersurface unless it equals $\PP(V)$\footnote{Given $[v]\in\PP(V)$ there exists $A\in\lagr$ such that $A\cap F_v=\{0\}$ and hence $[v]\notin Y_A$; thus $Y_A$ is a sextic hypersurface for generic $A\in\lagr$. On the other hand if $A=F_w$ for some $[w]\in\PP(V)$ then $Y_A=\PP(V)$.}; if the former holds we say that $Y_A$ is  an {\it EPW-sextic}. What do EPW-sextics look like? The main point  is that locally they are the degeneracy locus of a symmetric map of vector-bundles (they were introduced by Eisenbud, Popescu and Walter to give examples of a \lq\lq quadratic sheaf\rq\rq, namely    $\coker(\lambda_A)$, which can not be expressed {\bf globally} as the cokernel of a symmetric map of vector-bundles on $\PP^5$). More precisely given 
 $B\in\lagr$  we let   $\cU_B\subset\PP(V)$  be the open subset defined by
\begin{equation}\label{calub}
\cU_B:=\{[v]\in\PP(V)\mid  F_v\cap B=\{0\}\}\,.
\end{equation}
Now choose $B$ transversal to $A$. We have a direct-sum decomposition $\wedge^3 V=A\oplus B$; since $A$ is lagrangian the symplectic form $(,)_V$ defines  an isomorphism $B\cong A^{\vee}$.
Let  $[v]\in\cU_B$: since $F_{v}$ is transversal to $B$ it is the graph of a  map 
\begin{equation}\label{mappagrafo}
\tau_A^B([v])\colon A\to B\cong A^{\vee},\qquad [v]\in\cU_B\,.
\end{equation}
 The map $\tau_A^B([v])$ is symmetric  because $A,B$ and $F_v$ are lagrangians.
\begin{rmk}\label{rmk:classico}
There is one choice of $B$ which produces a \lq\lq classical\rq\rq description of $Y_A$, namely $B=\wedge^3 V_0$ where $V_0\subset V$ is a codimension-$1$ subspace\footnote{It might happen that there is no $V_0$ such that $\wedge^3 V_0$ is transversal to $A$: in that case $A$ is unstable for the natural $PGL(V)$-action on $\lagr$ and hence we may forget about it.}. With such a choice of $B$ we have $\cU_B=(\PP(V)\setminus\PP(V_0))$; we identify it with $V_0$ by choosing $v_0\in(V\setminus V_0)$ and mapping
\begin{equation}\label{pochina}
\begin{matrix}
 V_0 & \overset{\sim}{\lra} & \PP(V)\setminus\PP(V_0) \\
 v & \mapsto & [v_0+v]
\end{matrix}
\end{equation}
The direct-sum decomposition $\wedge^3 V=F_{v_0}\oplus\wedge^3 V_0$ and transversality $A\pitchfork\wedge^3 V_0$ allows us to view $A$ as the graph of a (symmetric) map $\wt{q}_A\colon F_{v_0}\to\wedge^3 V_0$. Identifying $\wedge^2 V_0$ with $F_{v_0}$ via the isomorphism
 \begin{equation}\label{effevu}
\begin{matrix}
\wedge^2 V_0 & \overset{\sim}{\lra} & F_{v_0} \\
\alpha & \mapsto & v_0\wedge \alpha
\end{matrix}
\end{equation}
we may view $\wt{q}_A$ as a symmetric map 
 \begin{equation}
\wedge^2 V_0\lra \wedge^3 V_0=\wedge^2 V_0^{\vee}.
\end{equation}
We let $q_A\in Sym^2(\wedge^2 V_0^{\vee})$ be the quadratic form  corresponding to $\wt{q}_A$. Given $v\in V_0$ let $q_v\in Sym^2(\wedge^2 V_0^{\vee})$ be the Pl\"ucker quadratic form $q_v(\alpha):=vol(v_0\wedge v\wedge\alpha\wedge\alpha)$. Modulo Identification~\eqref{pochina} we have
\begin{equation}
Y_A\cap(\PP(V)\setminus\PP(V_0))=V(\det(q_A+q_v)).
\end{equation}
Equivalently let
\begin{equation}
Z_A:=V(q_A)\cap \GG r(2,V_0)\subset\PP(\wedge^2 V_0)\cong\PP^9.
\end{equation}
Then we have an isomorphism
\begin{equation}\label{lapunta}
\begin{matrix}
 \PP(V) & \overset{\sim}{\lra} & |\cI_{Z_A}(2)| \\
 [\lambda v_0+\mu v] & \mapsto & V(\lambda q_A+\mu q_v)
\end{matrix}
\end{equation}
(Here $\lambda,\mu\in\CC$ and $v\in V_0$.) Let $D_A\subset |\cI_{Z_A}(2)|$ be the discriminant locus; modulo the above identification we have
\begin{equation}
Y_A\cap(\PP(V)\setminus\PP(V_0))=D_A\cap ( |\cI_{Z_A}(2)|\setminus  
|\cI_{\GG r(2,V_0)}(2)|).
\end{equation}
Notice that $|\cI_{\GG r(2,V_0)}(2)|$ is a hyperplane contained in $D_A$ with multiplicity $4$; that explains why $\deg Y_A=6$ while $\deg D_A=10$.
\end{rmk}
We go back to general considerations regarding $Y_A$. The symmetric map $\tau_A^B$ of~\eqref{mappagrafo} allows us to give a structure of scheme to the degeneracy locus
\begin{equation}
Y_A[k]=\{[v]\in\PP(V)\mid \dim(A\cap F_v)\ge k\}
\end{equation}
by declaring that $Y_A[k]\cap\cU_B=V(\wedge^{(11-k)}\tau_A^B)$. By a standard dimension count we expect that the following holds for generic $A\in\lagr$: $Y_A[3]=\es$, $Y_A[2]=sing Y_A$ and $Y_A[2]$ is a smooth surface (of degree $40$ by~(6.7) of~\cite{fulpra}), in particular $Y_A$ should be a very special sextic hypersurface. This is indeed the case; in order to be less \lq\lq generic\rq\rq  let
\begin{eqnarray}
\Delta:=  &  \{A\in\lagr\mid Y_A[3]\not=\es\}\,,\label{eccodel}\\
\Sigma:= & \{A\in\lagr\mid \text{$\exists W\in{\mathbb G}r(3,V)$ s.~t.~$\wedge^3 W\subset A$}\}.\label{eccosig}
\end{eqnarray}
 A straightforward computation shows that $\Sigma$ and $\Delta$ are distinct closed irreducible  codimension-$1$ subsets of $\lagr$.
 Let 
 \begin{equation}\label{eccozero}
\lagr^0:=  \lagr\setminus\Sigma\setminus\Delta\,.
\end{equation}
Then $Y_A$ has the generic behaviour described  above if and only if it belongs to $\lagr^0$. Next let $A\in\lagr$ and suppose that $Y_A\not=\PP(V)$: then $Y_A$ comes equipped with a natural double cover  $f_A\colon X_A\to Y_A$ defined
 as follows. Let $i\colon Y_A\hra\PP(V)$ be the inclusion map: 
since $\coker(\lambda_A)$ is annihilated by a local generator of $\det\lambda_A$ we have $\coker(\lambda_A)=i_{*}\zeta_A$ for a sheaf $\zeta_A$ on $Y_A$.
Choose $B\in\lagr$ transversal to $A$; the direct-sum decomposition
$\wedge^3 V=A\oplus B$
 defines a projection map  $\wedge^3 V\to A$; thus we get   a map $ \mu_{A,B}\colon F\to A\otimes\cO_{\PP(V)}$. 
 We claim that there is a  commutative diagram with exact rows
\begin{equation}\label{spqr}
\begin{array}{ccccccccc}
0 & \to & F&\mapor{\lambda_A}& A^{\vee}\otimes\cO_{\PP(V)} & \lra & i_{*}\zeta_A
&
\to & 0\\
 & & \mapver{\mu_{A,B}}& &\mapver{\mu^{t}_{A,B}} &
&
\mapver{\beta_{A}}& & \\
0 & \to & A\otimes\cO_{\PP(V)}& \mapor{\lambda_A^{t}}& F^{\vee} & \lra &
Ext^1(i_{*}\zeta_A,\cO_{\PP(V)}) & \to & 0
\end{array}
\end{equation}
(Since $A$ is Lagrangian the symplectic form defines a canonical isomorphism $\left(\wedge^3 V/A\right)\cong A^{\vee}$; that is why we may write $\lambda_A$ as above.)
In fact the second row is obtained by applying the $Hom(\,\cdot\, ,\cO_{\PP(V)})$-functor to the first row and the equality $\mu_{A,B}^t\circ\lambda_A=\lambda_A^t\circ\mu_{A,B}$ holds because
 $F$ is a Lagrangian sub-bundle of $\wedge^3 V\otimes\cO_{\PP(V)}$.
 Lastly $\beta_A$ is defined to be the unique map making the diagram commutative; as suggested by notation it  is independent of  $B$. Next by applying the $Hom(i_{*}\zeta_A,\,\cdot\,)$-functor to the exact sequence
\begin{equation}
0\lra\cO_{\PP(V)}\lra\cO_{\PP(V)}(6)\lra\cO_{Y_A}(6)\lra 0
\end{equation}
 we get the exact sequence
\begin{equation}\label{romaleone}
0\lra i_{*}Hom(\zeta_A,\cO_{Y_A}(6))\overset{\partial}{\lra}
Ext^1(i_{*}\zeta_A,\cO_{\PP(V)})\overset{n}{\lra} Ext^1(i_{*}\zeta_A,\cO_{\PP(V)}(6))
\end{equation}
where $n$ is locally equal to multiplication  by $\det\lambda_A$. Since the second row of~\eqref{spqr} is exact  a local generator of $\det\lambda_A$ annihilates  $Ext^1(i_{*}\zeta_A,\cO_{\PP(V)})$; thus $n=0$ and hence we get a canonical isomorphism
\begin{equation}\label{extugualehom}
\partial^{-1}\colon 
Ext^1(i_{*}\zeta_A,\cO_{\PP(V)})\overset{\sim}{\lra} i_{*}Hom(\zeta_A,\cO_{Y_A}(6)).
\end{equation}
Let
\begin{equation}\label{emmea}
\begin{matrix}
\zeta_A\times\zeta_A & \overset{\wt{m}_A}\lra & \cO_{Y_A}(6) \\
(\sigma_1,\sigma_2) & \mapsto & (\partial^{-1}\circ\beta_A(\sigma_1))(\sigma_2).
\end{matrix}
\end{equation}
Let $\xi_A:=\zeta_A(-3)$;
tensorizing both sides of~\eqref{emmea} by $\cO_{Y_A}(-6)$ we get   a multiplication map 
\begin{equation}
m_A\colon\xi_A\times\xi_A\to\cO_{Y_A}.
\end{equation}
The above  multiplication map equips  $\cO_{Y_A}\oplus\xi_A$ with the structure of a commutative and associative  $\cO_{Y_A}$-algebra. 
We let 
\begin{equation}
X_A:=\Spec(\cO_{Y_A}\oplus\xi_A),\qquad 
f_A\colon X_A\to Y_A.
\end{equation}
Then $X_A$ is a {\it double EPW-sextic}. Let $\cU_B$ be as in~\eqref{calub}: we may describe  $f_A^{-1}(Y_A\cap \cU_B)$  as follows. Let $M$ be the symmetric matrix associated to~\eqref{mappagrafo} by a
choice of  basis of $A$ and   $M^c$ be the matrix of cofactors of $M$. Let $Z=(z_1,\ldots,z_{10})^t$ be the coordinates on $A$ associated to the given basis; then $f_A^{-1}(Y_A\cap \cU_B)\subset \cU_B\times{\mathbb A}^{10}_{Z}$ and its ideal is generated by 
 the entries of the matrices
\begin{equation}
M\cdot Z\,,\ \ Z\cdot Z^t-M^c\,.
\end{equation}
(The \lq\lq missing\rq\rq equation $\det M=0$ follows by Cramer's rule.)  One may reduce the size of $M$ in a neighborhood of $[v_0]\in\cU_B$ as follows. The kernel of the symmetric map $\tau^B_A([v_0])$ equals $A\cap   F_{v_0}$; let $J\subset A$ be complementary to $A\cap   F_{v_0}$. Diagonalizing the restriction of $\tau_A^B$ to $J$ we may assume that 
\begin{equation}
M([v])=
\begin{pmatrix}
 M_0([v]) & 0 \\
 0 & 1_{10-k}
\end{pmatrix}
\end{equation}
where $k:=\dim(A\cap F_{v_0})$
and $M_0$ is a symmetric $k\times k$ matrix. It follows at once that $f_A$ is \'etale over $(Y_A\setminus Y_A[2])$. We also get the following description of $f_A$ over 
 a point $[v_0]\in(Y_A[2]\setminus Y_A[3])$  under the hypothesis that there is no $0\not= v_0\wedge v_1\wedge v_2\in A$. First $f_A^{-1}([v_0])$ is a single point $p_0$, secondly $X_A$ is smooth at $p_0$ and there exists an involution $\phi$ on $(X_A,p_0) $ with $2$-dimensional fixed-point set such that $f_A$ is identified with the quotient map $(X_A,p_0)\to (X_A,p_0)/\la\phi\ra$. It follows that $X_A$ is smooth if $A\in\lagr^0$. We may fit together all smooth double EPW-sextics by going to a suitable double cover $\rho\colon \lagr^\star\to \lagr^0$;  there exist a family of HK four-folds $\cX\to \lagr^{\star}$ and a  relatively ample line-bundle $\cL$ over $\cX$ such that for all $t\in \lagr^\star$ we have
 $(X_t,L_t)\cong (X_{A_t},f_{A_t}^{*}\cO_{Y_{A_t}}(1))$ where
 \begin{equation}
X_t:=\rho^{-1}(t),\quad L_t=\cL|_{X_t},\quad A_t:=\rho(t).
\end{equation}
  The following result was proved in~\cite{og2}.
\begin{thm}{\rm[O'Grady]}\label{thm:epwdoppie}
 Let $A\in\lagr^0$. Then $X_A$ is a HK four-fold deformation equivalent to $K3^{[2]}$. Moreover $\cX\to \lagr^\star$ is a locally complete family of HK varieties of degree $2$.
\end{thm}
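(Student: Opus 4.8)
The plan is to prove \Ref{thm}{epwdoppie} in two essentially independent stages: first, that some single double EPW-sextic $X_{A_0}$ (for a cleverly chosen $A_0\in\lagr^0$) is a HK four-fold deformation equivalent to $K3^{[2]}$, and second, that the family $\cX\to\lagr^\star$ is locally complete, i.e.~for every $t\in\lagr^\star$ the Kodaira-Spencer map $T_t\lagr^\star\to\Def(X_t,L_t)$ is onto. Since $\lagr^0$ is irreducible (a straightforward computation, as stated in the excerpt, shows $\Sigma$ and $\Delta$ are irreducible of codimension $1$) and all the $X_A$ with $A\in\lagr^0$ are smooth and form a smooth proper family over $\lagr^\star$, once one member is shown to be HK and deformation equivalent to $K3^{[2]}$ the same holds for all of them by Ehresmann plus the fact that the HK condition (simply connected, $h^{2,0}=1$) is deformation-invariant; so the real content is a single good example plus local completeness.

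For the first stage, I would choose $A_0$ so that $Y_{A_0}$ is as degenerate/classical as possible while still lying in $\lagr^0$ — for instance exploiting \Ref{rmk}{classico} to realize $Y_{A_0}\cap(\PP(V)\setminus\PP(V_0))$ as a discriminant hypersurface $D_{A_0}$ inside $|\cI_{Z_{A_0}}(2)|$, where $Z_{A_0}=V(q_{A_0})\cap\GG r(2,V_0)$. The idea is to pick $q_{A_0}$ so that $Z_{A_0}$ is a smooth $K3$ surface $S$ of degree $2$ (a double cover of $\PP^2$, say) — this is the "cleanest" case, and then to identify the double cover $X_{A_0}\to Y_{A_0}$ with a known HK four-fold attached to $S$. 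Concretely, over the relevant open set the double cover parametrizes, roughly, a quadric in the pencil through $Z_{A_0}$ together with a choice of ruling (an isotropic subspace), and this should be identified birationally with $S^{[2]}$ via a variety of lines / conics construction; alternatively one realizes $X_{A_0}$ as (a resolution of) a moduli space of sheaves on $S$ and invokes the results on deformation-equivalence of such moduli spaces with $K3^{[2]}$ quoted in the excerpt (\cite{go-huy,ogvb,yoshi1}). Either way one then checks $X_{A_0}$ is smooth (it is, since $A_0\in\lagr^0$, by the local analysis of $f_A$ over $Y_A[2]\setminus Y_A[3]$ given above), and reads off $\pi_1=1$ and $h^{2,0}=1$ from the explicit model, concluding $X_{A_0}$ is HK and deformation equivalent to $K3^{[2]}$.

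For the second stage — local completeness — the key computation is the differential of the period-type map $T_{A}\lagr\to H^1(T_{X_A})$ and the comparison of its image with $\Def(X_A,L_A)\subset\Def(X_A)$, which has codimension $1$ and tangent space $\ker(\kappa\mapsto\mathrm{Tr}(\kappa\cup c_1(L_A)))$ (Map~\eqref{ostia} with $\alpha=c_1(L_A)$). Since $\dim\lagr=\dim\mathbb{LG}(\wedge^3\CC^6)=55$, while $\dim\Def(X,L)=h^{1,1}(X)-1=20$ for $X$ deformation equivalent to $K3^{[2]}$, the map cannot be injective; what one needs is surjectivity onto $\Def(X_A,L_A)$. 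The standard route is: (i) the tangent space to $\lagr$ at $A$ is $\Sym^2 A^\vee$; (ii) unwind the definition of $X_A$ as $\Spec(\cO_{Y_A}\oplus\xi_A)$ with $\xi_A$ determined by $\lambda_A$ and the multiplication $m_A$ coming from diagram~\eqref{spqr}, so that first-order deformations of $A$ induce first-order deformations of the pair $(X_A,L_A)$ (the polarization $f_A^*\cO_{Y_A}(1)$ deforms along because $\cO_{\PP(V)}(1)$ is rigid), giving the Kodaira-Spencer map explicitly in terms of $\Sym^2A^\vee$; (iii) show this map is surjective onto $T\Def(X_A,L_A)$, equivalently that its composition with the weight-$2$ period differential $H^1(T_X)\hookrightarrow\Hom(H^{2,0},H^{1,1})$ (injective by infinitesimal Torelli, item (b) of the introduction) surjects onto $\Hom(H^{2,0}(X),(c_1(L_A))^\perp\cap H^{1,1})$. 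I expect \textbf{this surjectivity} to be the main obstacle: it is a genuine computation, most cleanly done at the special $A_0$ above (where one has an explicit model) or, following the EPW-sextic literature, by a dimension count showing that the fibers of the period map on $\lagr^\star$ are exactly the $\mathrm{PGL}(V)$-orbits (which have dimension $35$, so $55-35=20$ matches $\dim\Def(X,L)$), combined with a local injectivity statement transverse to the orbit. Once surjectivity of Kodaira-Spencer is in hand at one point, it holds on a dense open subset of $\lagr^\star$, and then everywhere by the upper-semicontinuity of the corank together with irreducibility of $\lagr^\star$ and the fact that $\dim\Def(X_t,L_t)$ is constant; this yields local completeness and finishes the theorem.
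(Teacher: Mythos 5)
Your second stage is essentially the paper's argument: O'Grady does not compute the Kodaira--Spencer map directly but proves that the germ $(\lagr^0//PGL(V),[A])\to Def(X_A,f_A^{*}\cO_{Y_A}(1))$ is \emph{injective} (because $h^0=\chi=6$ by Kodaira vanishing and Riemann--Roch, so the polarized variety recovers the map to $\PP^5$, hence $Y_A$, hence $A$ up to $PGL(V)$) and then concludes by the dimension count $55-35=20=\dim Def(X,L)$ --- exactly the ``alternative'' route you sketch. So the surjectivity you flag as the main obstacle is circumvented; that half is fine.

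The genuine gap is in your first stage. You need one explicit member of the family identified with a known deformation of $K3^{[2]}$, and you do not produce it. Your concrete suggestion is dimensionally incoherent: $Z_{A_0}=V(q_{A_0})\cap\GG r(2,V_0)$ is a quadric section of the six-dimensional Grassmannian $\GG r(2,V_0)\subset\PP^9$, hence a five-fold, never a $K3$ surface of degree $2$; and ``a known HK four-fold attached to $S$ via a variety of lines/conics construction'' is precisely the assertion that has to be proved. More importantly, the paper's mechanism is structurally different and the difference matters: the identification with a Hilbert square is \emph{not} made at a point of $\lagr^0$ at all. One degenerates to $A\in(\Delta\setminus\Sigma)$, where $X_A$ is \emph{singular} along the finite set $f_A^{-1}Y_A[3]$; one constructs a small resolution $\wh{X}_A\to X_A$ with $\PP^2$'s over the singular points, identifies $\wh{X}_A$ with $S_A(v_i)^{[2]}$ for a genus-$6$ $K3$ surface $S_A(v_i)$ (via the degree-$2$ map $S_A(v_i)^{[2]}\dashrightarrow|\cI_{S_A(v_i)}(2)|^{\vee}\cong\PP^5$ onto $Y_A$, regular when $S_A(v_i)$ contains no lines --- this is also where projectivity of the small resolution has to be checked, since a priori it need not be K\"ahler); and then one needs a \emph{simultaneous} small resolution over a neighbourhood of $A$ in $\lagr\setminus\Sigma$ to transfer the conclusion to the nearby smooth $X_{A'}$ with $A'\in\lagr^0$. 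Without either producing an actual $A_0\in\lagr^0$ with $X_{A_0}$ identifiable (which is delicate: the generic $X_A$ has Picard number one and divisibility-one polarization, so is not birational to any $S^{[2]}$, and any such $A_0$ lies on a proper Noether--Lefschetz-type locus) or supplying the degeneration-plus-simultaneous-resolution argument, the first and central claim of the theorem is unproved.
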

\n
{\it Sketch of proof following~\cite{ogemma}.\/} The main issue is to prove that $X_A$ is a HK deformation of $K3^{[2]}$. In fact once this is known the equality
\begin{equation}
\int_{X_A}f_A^{*} c_1(\cO_{Y_A}(1))^4=2\cdot 6=12
\end{equation}
together with~\eqref{belleq} gives that $q(f_A^{*}c_1( \cO_{Y_A}(1)))=2$ and moreover the family $\cX\to \lagr^\star$ is locally complete by the following argument. First Kodaira vanishing and Formula~\eqref{rrhilbsq} give that
\begin{equation}
h^0(f_A^{*} \cO_{Y_A}(1))=\chi(f_A^{*} \cO_{Y_A}(1))=6
\end{equation}
and hence the map
\begin{equation}
X_A\overset{f_A}{\lra} Y_A\hra\PP(V)
\end{equation}
may be identified with the map $X_A\to |f_A^{*} \cO_{Y_A}(1)|^{\vee}$. From this one gets that the natural map $(\lagr^0// PGL(V),[A])\to Def(X_A,f_A^{*} \cO_{Y_A}(1))$ is injective. One concludes that $\cX\to \lagr^\star$ is locally complete by
a dimension count:
\begin{equation}
\dim(\lagr^0// PGL(V))=20=\dim Def(X_A,f_A^{*} \cO_{Y_A}(1)).
\end{equation}
Thus we are left with the task of proving  that $X_A$ is a HK deformation of $K3^{[2]}$ if $A\in\lagr^0$. We do this by analyzing $X_A$ for 
\begin{equation}
A\in(\Delta\setminus\Sigma).
\end{equation}
By definition $Y_A[3]$ is non-empty; one shows that it is finite, that $sing X_A= f_A^{-1} Y_A[3]$ and that $f_A^{-1}[v_i]$ is a single point for each $[v_i]\in Y_A[3]$. There exists a small resolution
\begin{equation}
\pi_A\colon \wh{X}_A\lra X_A,\qquad 
(f_A\circ\pi_A)^{-1}([v_i])\cong\PP^2\quad\forall [v_i]\in Y_A.
\end{equation}
In fact one gets that locally over the points of $sing X_A$ the above resolution is identified with the contraction $c$ (or $c^{\vee}$) appearing in~\eqref{contraggo} - in particular $\wh{X}_A$ is not unique, in fact there are $2^{|Y_A[3]|}$ choices involved in the construction of $\wh{X}_A$. The resolution $\wh{X}_A$ fits into a simultaneous resolution i.e.~given a sufficiently small open (in the classical topology) $A\in U\subset(\lagr\setminus\Sigma)$ we have proper maps $\pi,\psi$
\begin{equation}
\wh{\cX}_U\overset{\pi}{\lra}\cX_U \overset{\psi}{\lra} U
\end{equation}
where $\psi$ is a tautological family of double EPW-sextics over $U$ i.e.~$\psi^{-1}A\cong X_A$ and $(\psi\circ\pi)^{-1}A\to \psi^{-1}A=X_A$ is a small resolution as above if $A\in U\cap\Delta$ while $\pi^{-1}A\cong X_A$ if $A\in(U\setminus\Delta)$. Thus it suffices to prove that there exist $A\in(\Delta\setminus\Sigma)$ such that $\wh{X}_A$ is a HK deformation of $K3^{[2]}$. Let $[v_i]\in Y_A[3]$; we define a $K3$ surface $S_A(v_i)$  as follows. There exists a codimension-$1$ subspace $V_0\subset V$ not containing $v_i$ and such that $\wedge^3 V_0$ is transversal to $A$. Thus $Y_A$ can be  described as in~\Ref{rmk}{classico}: we adopt notation introduced in that remark, in particular we have the quadric $Q_A:=V(q_A)\subset\PP(\wedge^2 V_0)$. The singular locus of $Q_A$ is $\PP(A\cap F_{v_i})$ - we recall Identification~\eqref{effevu}. By hypothesis $\PP(A\cap F_{v_i})\cap\GG r(2,V_0)=\es$; it follows that $\dim\PP(A\cap F_{v_i})=2$ (by hypothesis $\dim\PP(A\cap F_{v_i})\ge 2$). Let
\begin{equation}
S_A(v_i):=Q_A^{\vee}\cap \GG r(2,V_0)\subset \PP(\wedge^2 V_0^{\vee}).
\end{equation}
Then $S_A(v_i)\subset\PP(Ann\,(A\cap F_{v_i}))\cong\PP^6$ is the transverse intersection of a smooth quadric and the Fano $3$-fold of index $2$ and degree $5$, i.e.~the generic $K3$ of genus $6$. There is a natural degree-$2$ rational map
\begin{equation}
g_i\colon S_A(v_i)^{[2]}\dashrightarrow |\cI_{S_A(v_i)}(2)|^{\vee}
\end{equation}
 which associates to $[Z]$ the set of quadrics in $ |\cI_{S_A(v_i)}(2)|$ which contain the line spanned by $Z$ - thus $g_i$ is regular if $S_A(v_i)$ contains no lines. One proves that $Im(g_i)$ may be identified with $Y_A$; it follows that there exists a birational map  
 \begin{equation}
h_i\colon S_A(v_i)^{[2]}\dashrightarrow\wh{X}_A
\end{equation}
Moreover if $S_A(v_i)$ contains no lines (that is true for generic $A\in(\Delta\setminus\Sigma)$) there is a choice of small resolution $\wh{X}_A$ such that $h_i$ is regular and hence an isomorphism - in particular $\wh{X}_A$ is projective\footnote{There is no reason a priori why $\wh{X}_A$ should be K\"ahler, in fact one should expect it to be non-K\"ahler for some $A$ and some choice of small resolution}. This proves that $X_A$ is a HK deformation of $K3^{[2]}$ for $A\in\lagr^0$. 
\qed
\vskip 2mm 
\begin{rmk}\label{rmk:fibcon}
The above proof of~\Ref{thm}{epwdoppie} provides a description of $X_A$ for $A\in(\Delta\setminus\Sigma)$; what about $X_A$ for $A\in\Sigma$? One proves that if $A\in\Sigma$ is generic - in particular there is a unique $W\in\GG r(3,V)$ such that $\wedge^3 W\subset A$ - then the following hold:
\begin{itemize}
\item[(1)]
$C_{W,A}:=\{[v]\in \PP(W)\mid \dim(A\cap F_v)\ge 2\}$ is a smooth sextic curve.
\item[(2)]
$sing X_A=f_A^{-1}\PP(W)$ and the restriction of $f_A$ to $sing X_A$ is the  double cover of $\PP(W)$ branched over $C_{W,A}$, i.e.~a $K3$ surface of degree $2$.  
\item[(3)]
If $p\in sing X_A$ the germ  $(X_A,p)$ (in the classical topology) is isomorphic to the product of a smooth $2$-dimensional germ and an  $A_1$ singularity; thus the blow-up $\wt{X}_A\to X_A$ resolves the singularities of $X_A$. 
\item[(4)]
Let $U\subset\lagr $ be a small open (classical topology) subset containing $A$.
After a  base change $\wt{U}\to U$ of order $2$ branched over $U\cap \Sigma$ there is a simultaneous resolution of singularities of the tautological family of double EPW's parametrized by $\wt{U}$. It follows that $\wt{X}_A$ is a HK deformation of $K3^{[2]}$. 
\item[(5)]
Let $E_A$ be the exceptional divisor  of the blow-up  $\wt{X}_A\to X_A$ and $e_A\in H^2(\wt{X}_A;\ZZ)$ be its Poincar\'e dual; then $q(e_A)=-2$ and $(e_A,H^2(\wt{X}_A;\ZZ))=\ZZ$.  
\end{itemize}
\end{rmk}
 \subsection{The Beauville-Donagi family}\label{bedofam}
\setcounter{equation}{0}
Let $\cD,\cP\subset|\cO_{\PP^5}(3)|$ be the prime divisors parametrizing singular cubics and cubics containing a plane respectively. We recall that if $Z\in(|\cO_{\PP^5}(3)|\setminus \cD)$ then 
\begin{equation}
X=F(Z):=\{L\in\GG r(1,\PP^5)\mid L\subset X\}
\end{equation}
is a HK four-fold deformation equivalent to $K3^{[2]}$. Let $H$ be the Pl\"ucker ample divisor  on $X$ and $h=c_1(\cO_X(H))$; then
\begin{equation}
q(h)=6,\qquad (h,H^2(X;\ZZ))_X=2\ZZ.
\end{equation}
These results are proved in~\cite{beaudon} by considering the codimension-$1$ locus of Pfaffian cubics; they show that if $Z$ is a generic such Pfaffian cubic then $X$ is isomorphic to $S^{[2]}$ where $S$ is a $K3$ of genus $8$ that one associates to $Z$, moreover the class $h$ is identified with $2\wt{\mu}(D)-5\xi_2$ where $D$ is the class of the (genus $8$) hyperplane class of $S$. Here we will stress the similarities between the HK four-folds parametrized by $\cD,\cP$ and those parametrized by the loci $\Delta,\Sigma\subset\lagr$ described in the previous subsection. Let $Z\in\cD$ be generic. Then $Z$ has a unique singular point $p$ and it is ordinary quadratic, moreover the set of lines in $Z$ containing $p$ is a $K3$ surface $S$ of genus $4$. The variety $X=F(Z)$ parametrizing lines in $Z$ is  birational to $S^{[2]}$; the birational  map is given by
\begin{equation}
\begin{matrix}
S^{[2]} & \dashrightarrow & F(Z) \\
\{L_1,L_2\} & \mapsto  & R
\end{matrix}
\end{equation}
where $L_1+L_2+R=\la L_1,L_2\ra\cdot Z$. 
Moreover $F(Z)$ is singular with singular locus  equal to $S$. 
Thus from this point of view $\cD$ is similar to $\Delta$. On the other hand let $Z_0\in(|\cO_{\PP^5}(3)|\setminus \cD)$ be \lq\lq close\rq\rq to $Z$; 
the monodromy action on $H^2(F(Z_0))$ of a loop in $(|\cO_{\PP^5}(3)|\setminus \cD)$ which goes once  around $\cD$ has order $2$ and hence as far as monodromy is concerned $\cD$ is similar to $\Sigma$.
(Let $U\subset |\cO_{\PP^5}(3)|$ be a small open (classical topology) set containing $Z$; 
it is natural to expect that after a base change $\pi\colon\wt{U}\to U$ of order $2$ ramified over $\cD$ the family of $F(Z_u)$ for $u\in (\wt{U}\setminus \pi^{-1}\cD)$ can be completed over points of $\pi^{-1}\cD$ with HK four-folds birational (isomorphic?) to $S^{[2]}$.) Now let $Z\in\cP$ be generic, in particular it contains a unique plane $P$. Let $T\cong\PP^2$ parametrize $3$-dimensional linear supspaces of $\PP^5$ containing $P$; given $t\in T$ and $L_t$ the corresponding $3$-space  the intersection $L_t\cdot Z$ decomposes as $P+Q_t$ where $Q_t$ is a quadric surface. Let $E\subset X=F(Z)$ be the set defined by
\begin{equation}
E:=\{L\in F(Z)\mid \text{$\exists t\in T$ such that $L\subset Q_t$}\}. 
\end{equation}
For $Z$ generic we have a well-defined map $E\to T$ obtained by associating to $L$ the unique $t$ such that $L\subset Q_t$; the Stein factorization of $E\to T$  is $E\to S\to T$ where $S\to T$ is the double cover ramified over the curve $B\subset T$ parametrizing singular quadrics. The locus $B$ is a smooth sextic curve and hence $S$ is a $K3$ surface of genus $2$. The picture is: $E$ is a conic bundle over the $K3$ surface $S$ and we have
\begin{equation}
q(E)=-2,\qquad (e,H^2(X;\ZZ))=\ZZ,\quad e:=c_1(\cO_X(E)).
\end{equation}
Thus from this point of view $\cP$ is similar to $\Sigma$ - of course if we look at monodromy the analogy fails.
 \section{Numerical Hilbert squares}
 \setcounter{equation}{0}
A {\it numerical Hilbert square} is a HK four-fold $X$ such that $c_X$ is equal to the Fujiki constant of $K3^{[2]}$ and  the lattice $H^2(X;\ZZ)$  is isometric to 
$H^2(K3^{[2]};\ZZ)$; by~\eqref{rethilb}, \eqref{fujimori} this holds if and only if
\begin{equation}
H^2(X;\ZZ)\cong U^3\wh{\oplus} E_8(-1)\wh{\oplus}\la -2\ra,\qquad
c_X=1.
\end{equation}
  We will present a program which aims to prove that a numerical Hilbert square is a deformation of $K3^{[2]}$ i.e.~an analogue of Kodaira's theorem that any two $K3$'s are deformation equivalent. First we recall how  Kodaira~\cite{kod} proved that $K3$ surfaces form a single deformation class. Let $X_0$ be a $K3$. Let $\cX\to T$ be a representative of the deformation space $Def(X_0)$. The image of the local period map $\pi\colon T\to \PP(H^2(X_0))$  contains an open (classical topology) subset of the quadric $\cQ:=V(q_{X_0})$. The set $\cQ(\QQ)$ of rational points of $\cQ$ is  dense (classical topology) in the set of real points $\cQ(\RR)$; it follows that the image $\pi(T)$ contains a point $[\sigma]$ such that $\sigma^{\bot}\cap H^2(X_0;\QQ)$ is generated by a non-zero $\alpha$ such that $q_X(\alpha)=0$. Let $t\in T$ such that $\pi(t)=[\sigma]$ and set $X:=X_t$; by the Lefschetz $(1,1)$ Theorem we have
  \begin{equation}
H^{1,1}_{\ZZ}(X)=\ZZ c_1(L),\qquad q_X(c_1(L))=0
\end{equation}
where $L$ is a holomorphic line-bundle on $X$. 
By Hirzebruch-Riemann-Roch and Serre duality we get that $h^0(L)+h^0(L^{-1})\ge 2$. Thus we may assume that $h^0(L)\ge 2$. It follows that $L$ is globally generated, $h^0(L)=2$ and the map $\phi_L\colon X\to |L|\cong\PP^1$ is an elliptic fibration. Kodaira then proved that any two elliptic $K3$'s are deformation equivalent. J.~Sawon~\cite{saw1} has launched a similar program with the goal of classifying deformation classes of higher-dimensional HK manifolds\footnote{One should assume that $b_2\ge 5$ in order to ensure that the set of rational points in $V(q_X)$  is non-empty (and hence dense in the set of real points).} by deforming them to Lagrangian fibrations - we notice that Matsushita~\cite{mats1,mats2,mats3} has proved quite a few results on HK manifolds which have non-trivial fibrations. The  program is quite ambitious; it runs immediately into the  problem of proving that if $L$ is a non-trivial line-bundle on a  HK manifold $X$ with $q_X(c_1(L))=0$ then $h^0(L)+h^0(L^{-1})>0$\footnote{Let $\dim X=2n$. Hirzebruch-Riemann-Roch gives that $\chi(L)=n+1$, one would like to show that $h^q(L)=0$ for $0<q<2n$.} On the other hand Kodaira's theorem on $K3$'s may be proved~\cite{lepotier} by deforming $X_0$ to a $K3$ surface $X$ such that $H^{1,1}_{\ZZ}(X)=\ZZ c_1(L)$ where $L$ is a holomorphic line-bundle such that $q_X(L)$ is a small positive integer, say $2$. By Hirzebruch-Riemann-Roch and Serre duality $h^0(L)+h^0(L^{-1})\ge 3$ and hence we may assume that $h^0(L)\ge 3$; it follows easily that $L$ is globally generated, $h^0(L)=3$ and the map $\phi_L\colon X\to |L|^{\vee}\cong\PP^2$ is a double cover ramified over a smooth sextic curve. Thus every $K3$ is deformation equivalent to a double cover of $\PP^2$ ramified over a sextic; since the parameter space for smooth sextics is connected it follows that    any two $K3$ surfaces are deformation equivalent. Our idea is to extend this proof to the case of numerical Hilbert squares. In short the plan is as follows. Let $X_0$ be a numerical Hilbert square. First we deform $X_0$ to a HK four-fold $X$ such that 
\begin{equation}\label{quadue}
H^{1,1}_{\ZZ}(X)=\ZZ c_1(L),\qquad q_X(c_1(L))=2
\end{equation}
 and the Hodge structure of $X$ is very generic given the constraint~\eqref{quadue}, see Section~\ref{specializzo} for the precise conditions.  By Huybrechts' Projectivity Criterion~\ref{procri} we may assume that $L$ is ample and then Hirzebruch-Riemann-Roch together with Kodaira vanishing gives that $h^0(L)=6$. Thus we must study the map $f\colon X\dashrightarrow|L|^{\vee}\cong\PP^5$. We prove that either   $f$ is the natural double cover of an EPW-sextic or else it is birational onto its image (a hypersurface of degree at most $12$). We conjecture that the latter never holds; if the conjecture is true then any numerical Hilbert square is a deformation of a double EPW-sextic and hence is a deformation of $K3^{[2]}$.  
 \subsection{The deformation}\label{specializzo}
\setcounter{equation}{0}
We recall Huybrechts' Theorem on surjectivity of the global period map for HK manifolds. 
Let $X_0$ be a HK manifold. Let $L$ be a lattice isomorphic to the lattice $H^2(X_0;\ZZ)$; we denote by $(,)_L$ the extension to $L\otimes\CC$ of the bilinear symmetric form on $L$. The
period domain $\Omega_L\subset\PP(L\otimes \CC)$ is
given by
\begin{equation}\label{perdom}
    \Omega_L:=\{[\sigma]\in \PP(L\otimes \CC)\mid (\sigma,\sigma)_{L}=0,
    \quad (\sigma,\ov{\sigma})_{L}>0\}.
\end{equation}
A  HK manifold $X$ deformation equivalent to $X_0$ is {\it marked} if it is equipped with 
  an isometry of lattices  $\psi\colon L \overset{\sim}{\lra} H^2(X;\ZZ)$. Couples
$(X,\psi)$ and $(X',\psi')$ are equivalent if
there exists an isomorphism $f\colon X\to X'$ such
that $H^2(f)\circ\psi' =\pm\psi$.
The moduli space $\cM_{X_0}$ of
marked HK manifolds
deformation equivalent to $X_0$  is the set of equivalence classes of couples as above.
 If $t\in\cM_{X_0}$ we
let $(X_t,\psi_t)$ be a representative of $t$. Choosing a representative $\cX\to T$ of the deformation space of $X_t$ with $T$ contractible we may put a natural structure 
of (non-separated) complex
analytic manifold on $\cM_{X_0}$, see for example Thm.(2.4) of~\cite{looi}.  The period map is given by
\begin{equation}\label{succofrutta}
\begin{matrix}
  \cM_{X_0} & \overset{\cP}{\lra} & \Omega_{L} \\
  (X,\psi) & \mapsto & \psi^{-1}H^{2,0}(X). 
\end{matrix}
\end{equation}
(we denote by the same symbol both the
isometry $L \overset{\sim}{\lra}
H^2(X;\ZZ)$ and its linear extension
$L\otimes\CC\to H^2(X;\CC)$.) 
The map $\cP$ is
locally an isomorphism by infinitesimal Torelli and local surjectivity of the period map. The following result is proved in~\cite{huy}; the proof  is an adaptation of Todorov's proof of surjectivity for $K3$ surfaces~\cite{tod}.
\begin{thm}{\rm [Todorov, Huybrechts]}\label{thm:tuttiper}
Keep notation as above and let
$\cM^0_{X_0}$ be a connected component of $\cM_{X_0}$.
The restriction of $\cP$ to $\cM^0_{X_0}$ is surjective.
\end{thm}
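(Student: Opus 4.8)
The plan is to run Todorov's argument for surjectivity of the $K3$ period map in the hyperk\"ahler generality, the extra inputs being the existence of the twistor family (Yau's theorem) and Huybrechts' identification of the K\"ahler cone with the positive cone at very general points, $\Ref{thm}{kalpos}$. Call $[\sigma]\in\Omega_L$ \emph{very general} if it lies outside every Noether--Lefschetz locus $\{[\sigma']\in\Omega_L\mid(\sigma',\lambda)_L=0\}$, $\lambda\in L\setminus\{0\}$; as these are countably many proper analytic subsets and $\Omega_L$ is connected, very general points are dense. First I would record that $\cP$ is a local isomorphism (infinitesimal Torelli plus local surjectivity, as recalled above), so $\cP(\cM^0_{X_0})$ is a non-empty open subset of $\Omega_L$, hence contains a very general $[\sigma]$. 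For a marked pair $(X,\psi)$ over such a point one has $\psi^{-1}H^{1,1}_\RR(X)=\langle\,\Re\sigma,\Im\sigma\,\rangle^{\perp}\subset L\otimes\RR$ and $\Pic(X)=0$; choosing $[\sigma]$ generic enough that $X$ is a very general fibre of a representative of $\Def(X)$, $\Ref{thm}{kalpos}$ gives $\cK_X=\cC_X$, the full positive cone inside $\psi^{-1}H^{1,1}_\RR(X)$.

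Next I would bring in the twistor family. For a marked pair $(X',\psi')$ over $[\sigma']\in\Omega_L$ and a K\"ahler class $\omega\in\cK_{X'}$, Yau's solution of the Calabi conjecture yields a hyperk\"ahler metric in the class $\omega$ and hence a twistor deformation over $\PP^1$ with central fibre $X'$. Parallel-transporting the marking over $\PP^1$ produces a holomorphic map $\PP^1\to\cM_{X_0}$ whose image is connected and meets $\cM^0_{X_0}$, hence lies in $\cM^0_{X_0}$, and whose composite with $\cP$ is an isomorphism onto the twistor line
\[
T_W:=\{[\sigma'']\in\Omega_L\mid \sigma''\in W\otimes\CC\},\qquad W:=\langle\,(\psi')^{-1}\omega,\ \Re\sigma',\ \Im\sigma'\,\rangle,
\]
a positive $3$-plane of $L\otimes\RR$ containing $\langle\Re\sigma',\Im\sigma'\rangle$. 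Combining with the previous paragraph: when $[\sigma]\in\cP(\cM^0_{X_0})$ is very general, \emph{every} positive $3$-plane $W\supset\langle\Re\sigma,\Im\sigma\rangle$ arises this way for some $\omega\in\cK_X=\cC_X$, so $T_W\subset\cP(\cM^0_{X_0})$ for all such $W$.

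Then I would invoke the Todorov-type connectivity lemma for twistor lines: since $\Omega_L$ is connected and positive $3$-planes form an irreducible real family, any $[\sigma_0]\in\Omega_L$ can be joined to the fixed very general $[\sigma]$ by a finite chain of twistor lines $T_{W_0},\dots,T_{W_k}$ with $[\sigma]\in T_{W_0}$, $[\sigma_0]\in T_{W_k}$ and $T_{W_{i-1}}\cap T_{W_i}\neq\emptyset$; as each $T_{W_i}$ is a curve on which only countably many points fail to be very general, one may also arrange each node $[\sigma^{(i)}]\in T_{W_{i-1}}\cap T_{W_i}$ to be very general. Bootstrapping along the chain: $[\sigma]$ very general $\Rightarrow T_{W_0}\subset\cP(\cM^0_{X_0})$ by the preceding step; then $[\sigma^{(1)}]$ is a very general point of the image, so $T_{W_1}\subset\cP(\cM^0_{X_0})$; iterating, $T_{W_k}\subset\cP(\cM^0_{X_0})$, whence $[\sigma_0]\in\cP(\cM^0_{X_0})$. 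Since $[\sigma_0]$ is arbitrary, $\cP|_{\cM^0_{X_0}}$ is surjective.

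I expect the real difficulty to lie in the twistor input and the connectivity lemma, not in the soft topology. Constructing the twistor family and checking that a suitable marking turns it into a \emph{holomorphic} $\PP^1$ in $\cM^0_{X_0}$ mapping isomorphically onto $T_W$ rests on Yau's theorem and the $\PP^1$ of complex structures; and it is precisely here that $\Ref{thm}{kalpos}$ is indispensable, since without $\cK_X=\cC_X$ at the very general point one controls only the twistor lines through $3$-planes meeting the actual K\"ahler cone and cannot guarantee that the chain in the last step stays inside $\cP(\cM^0_{X_0})$. The connectivity lemma itself is linear algebra over $\RR$ together with the transitivity of $O(L\otimes\RR)$ on positive $2$-planes, but has to be carried out while keeping track of the very-generality of the chain's nodes; everything else (local surjectivity of $\cP$, density of very general points, connectedness of $\Omega_L$) is standard.
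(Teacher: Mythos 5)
The paper does not actually prove \Ref{thm}{tuttiper}; it cites \cite{huy}, and the proof there is precisely the Todorov-style twistor argument you outline (openness of the image of $\cP$, density of good periods, twistor lines through good points, connectivity of $\Omega_L$ by chains of generic twistor lines). So your architecture is the intended one. There is, however, one step whose justification does not go through as written: the bootstrap at the nodes of the chain. To get $T_{W_1}\subset\cP(\cM^0_{X_0})$ you need the fibre $X^{(1)}$ of the first twistor family over the node $[\sigma^{(1)}]$ to satisfy $\cK_{X^{(1)}}=\cC_{X^{(1)}}$, and you deduce this from \Ref{thm}{kalpos} plus the node being ``very general'' in your sense, i.e.\ outside the Noether--Lefschetz loci of $\Omega_L$. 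But \Ref{thm}{kalpos} requires more: $X^{(1)}$ must avoid the countably many loci $T_\gamma$ attached to integral classes $\gamma$ of \emph{every} even degree $2p$ that are of type $(p,p)$ at the given point without being deformation-invariant (this is what feeds Demailly--Paun via \Ref{prp}{piatta}). Controlling only $H^2$ does not suffice a priori: a class $\gamma\in H^{4}(X;\ZZ)$ that is $SU(2)$-invariant for the hyperk\"ahler metric defining $T_{W_0}$ but not deformation-invariant gives a proper analytic locus $T_\gamma$ containing the \emph{entire} twistor line, so that no point of $T_{W_0}$ is very general in the sense required by \Ref{thm}{kalpos}. (At the initial point $[\sigma]$ you dodge this correctly, using openness of the image to choose $[\sigma]$ very general in $Def(X)$; at the nodes you only arrange triviality of the Picard group.)

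The gap is localized and can be closed with a tool the paper already quotes, namely Boucksom's theorem \cite{bouck}: a class $\alpha\in\cC_X$ is K\"ahler as soon as $\int_C\alpha>0$ for every rational curve $C$, so $\cK_X=\cC_X$ whenever $X$ contains no rational curves. And if $\Pic(X)=0$ then $X$ contains no curves at all: the functional $\gamma\mapsto\int_C\gamma$ on $H^2(X;\QQ)$ is represented, via the non-degenerate Beauville--Bogomolov pairing, by some $\beta\in H^2(X;\QQ)$, which is of type $(1,1)$ by~\eqref{tipoqu} and non-zero because $(\beta,\omega)_X=\int_C\omega>0$ for $\omega$ K\"ahler. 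Hence ``trivial Picard group implies $\cK=\cC$'', and your notion of very general node -- provided you also require each $W_i$ to be a \emph{generic} positive $3$-plane, $W_i^{\perp}\cap L=0$, so that the Noether--Lefschetz loci meet $T_{W_i}$ in only countably many points -- is enough to iterate along the chain. With this substitution the argument closes; the remaining input, the connectivity of $\Omega_L$ by chains of generic twistor lines with very general nodes, is genuinely needed and non-trivial, but is exactly the lemma proved in \cite{huy} (and revisited in \cite{verbtor}), as you indicate.
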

 Let 
 \begin{equation}
\Lambda:=U^3\widehat{\oplus}E_8\la-1\ra^2
 \widehat{\oplus}\la -2\ra
\end{equation}
be the Hilbert square lattice, see~\eqref{rethilb}. Thus $\Omega_{\Lambda}$ is the period space for numerical Hilbert squares. A straightforward computation gives the following result, see Lemma~3.5 of~\cite{og3}.
\begin{lmm}\label{lmm:eccacca}
Suppose that $\alpha_1,\alpha_2\in\Lambda$
satisfy
\begin{equation}\label{intermatrix}
(\alpha_1,\alpha_1)_{\Lambda}=(\alpha_2,\alpha_2)_{\Lambda}=2,\qquad
(\alpha_1,\alpha_2)_{\Lambda}\equiv 1\mod{2}.
\end{equation}
Let $X_0$ be a numerical Hilbert square. Let $\cM^0_{X_0}$ be
a connected component of the moduli space of marked HK 
four-folds deformation equivalent to $X_0$.
There exists $1\le i\le 2$  such that for every
 $t\in\cM^0_{X_0}$ the class of $\psi_t(\alpha_i)^2$ in
 $H^4(X_t;\ZZ)/Tors$ is indivisible.
\end{lmm}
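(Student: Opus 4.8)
The plan is to argue by contradiction using Verbitsky's theorem, specifically the isomorphism~\eqref{prodsim} which holds for numerical Hilbert squares since $b_2 = 23$. The key observation is that the class $\psi_t(\alpha_i)^2 \in H^4(X_t;\ZZ)/Tors$ is deformation-invariant in an appropriate sense (under parallel transport along $\cM^0_{X_0}$), so whether it is indivisible does not depend on $t \in \cM^0_{X_0}$; thus it suffices to produce a single $t$ and a single $i$ for which indivisibility holds, or — more robustly — to rule out the possibility that for \emph{some} $t$ \emph{both} $\psi_t(\alpha_1)^2$ and $\psi_t(\alpha_2)^2$ are divisible. Since the integral structure on $H^4$ is constant under parallel transport along a connected component, I can phrase everything purely in terms of the lattice $\Lambda$ together with the distinguished sublattice $\Sym^2\Lambda \hookrightarrow H^4(X_0;\ZZ)/Tors$ coming from cup-product, once I know how $\Sym^2\Lambda$ sits inside $H^4(S^{[2]};\ZZ)/Tors$.

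First I would recall (or re-derive) the structure of $H^4(S^{[2]};\ZZ)$ for $S$ a $K3$: by work of Fujiki and others the cup-product map $\Sym^2 H^2(S^{[2]};\ZZ) \to H^4(S^{[2]};\ZZ)/Tors$ is injective with a controlled, finite cokernel, and one knows precisely which primitive vectors of $\Sym^2\Lambda$ become divisible in $H^4(S^{[2]};\ZZ)/Tors$ and by what index. The point of the hypothesis~\eqref{intermatrix} is exactly to exploit this: I would compute, for a rank-$2$ sublattice $\langle \alpha_1,\alpha_2\rangle \subset \Lambda$ with Gram matrix $\begin{pmatrix} 2 & b \\ b & 2\end{pmatrix}$ where $b$ is odd, the images $\alpha_1^2$, $\alpha_2^2$, $\alpha_1\alpha_2$ in $H^4/Tors$ and ask which of them can be divisible. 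The divisibility class of $\alpha_i^2$ in $H^4/Tors$ is governed by a congruence condition modulo $2$ (or a small prime) on the Gram data; the parity constraint $(\alpha_1,\alpha_2)\equiv 1 \pmod 2$ forces these congruences for $i=1$ and $i=2$ to be \emph{incompatible}, so at most one of the two squares can fail to be indivisible.

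Concretely, the main step is: show that the set of $v\in\Lambda$ with $(v,v)_\Lambda = 2$ such that $v^2$ is divisible in $H^4(X_0;\ZZ)/Tors$ is contained in a single coset modulo $2\Lambda$ (or is otherwise pinned down by a mod-$2$ invariant), and that if $\alpha_1^2$ and $\alpha_2^2$ were both divisible then $\alpha_1$ and $\alpha_2$ would lie in the same such coset, whence $(\alpha_1,\alpha_2)_\Lambda \equiv (\alpha_1,\alpha_1)_\Lambda \equiv 0 \pmod 2$, contradicting~\eqref{intermatrix}. This reduces everything to one explicit computation of the cokernel of $\Sym^2\Lambda \to H^4(S^{[2]};\ZZ)/Tors$ together with its interaction with the quadratic form $q_{S^{[2]}}$; the deformation-invariance of the whole picture along $\cM^0_{X_0}$ then transports the conclusion from $S^{[2]}$ to an arbitrary $X_t$, and the choice of $i\in\{1,2\}$ is forced (and uniform in $t$) by which coset $\alpha_i$ avoids.

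The step I expect to be the main obstacle is the explicit determination of the cokernel of $\Sym^2\Lambda \to H^4(S^{[2]};\ZZ)/Tors$ and, in particular, the precise mod-$2$ criterion distinguishing divisible from indivisible squares of $(+2)$-vectors — this is a lattice-theoretic bookkeeping problem that must be done carefully, presumably by writing $\Lambda = \wt\mu_2 H^2(S;\ZZ) \oplus \ZZ\xi_2$ as in~\eqref{comhilb} and analyzing $\alpha^2$ in the coordinates adapted to this splitting, using the known additional integral classes in $H^4(S^{[2]})$ beyond $\Sym^2 H^2$. Everything else — the contradiction with the parity hypothesis, and the propagation across the connected component — is then formal.
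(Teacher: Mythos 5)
Your proposal contains the right arithmetic germ --- the hypothesis $(\alpha_1,\alpha_2)_{\Lambda}\equiv 1\pmod 2$ is indeed what makes it impossible for both squares to be divisible, and your observation that divisibility of $\psi_t(\alpha_i)^2$ is constant along the connected component $\cM^0_{X_0}$ (Gauss--Manin plus connectedness) is correct and necessary. But the mechanism you propose for the incompatibility has a genuine gap: you reduce everything to an explicit computation of the cokernel of $Sym^2\Lambda\to H^4(S^{[2]};\ZZ)/Tors$ and then "transport the conclusion from $S^{[2]}$ to an arbitrary $X_t$". That transport is not available. $X_0$ is only a \emph{numerical} Hilbert square: the whole point of the lemma, and of the program it serves, is that we do not yet know $X_0$ to be deformation equivalent to $K3^{[2]}$, so $S^{[2]}$ need not lie in $\cM^0_{X_0}$, and the integral overlattice $H^4(X_t;\ZZ)/Tors\supset Sym^2 H^2(X_t;\ZZ)$ is precisely the datum one cannot pin down (assuming it were that of $S^{[2]}$ would be circular). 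Worse, even if the reduction were legitimate it would prove nothing: on an honest deformation of $K3^{[2]}$ the square of \emph{every} class of square $2$ is already indivisible (\Ref{rmk}{casodefo}), so the "coset mod $2\Lambda$ of $(+2)$-vectors with divisible square" you want to read off from $S^{[2]}$ is empty and says nothing about a hypothetical exotic $X_t$.

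The correct route uses no information about $H^4(S^{[2]};\ZZ)$ at all, only Poincar\'e duality on $X_t$ and the Fujiki relation \eqref{polarizzo} with $c_{X_t}=1$. Set $b:=(\alpha_1,\alpha_2)_{\Lambda}$ and suppose that for some $t$ both classes are divisible, say $\psi_t(\alpha_i)^2=m_i\gamma_i$ with $\gamma_i\in H^4(X_t;\ZZ)/Tors$ and $m_i\ge 2$. From $\int_{X_t}\psi_t(\alpha_i)^4=3q(\alpha_i)^2=12$ and the integrality of $\int_{X_t}\gamma_i\cup\gamma_i=12/m_i^2$ one gets $m_1=m_2=2$. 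Then
\begin{equation*}
\int_{X_t}\gamma_1\cup\gamma_2=\frac{1}{4}\int_{X_t}\psi_t(\alpha_1)^2\cup\psi_t(\alpha_2)^2
=\frac{1}{4}\bigl(q(\alpha_1)q(\alpha_2)+2(\alpha_1,\alpha_2)^2\bigr)=\frac{2+b^2}{2},
\end{equation*}
which is not an integer because $b$ is odd --- contradicting the integrality of the cup-product pairing on $H^4(X_t;\ZZ)/Tors$. Hence for each $t$ at most one of the two squares is divisible, and by the constancy along $\cM^0_{X_0}$ that you already noted, a single $i$ works for every $t$. This intrinsic computation is the "straightforward computation" the paper alludes to; your parity conclusion $(\alpha_1,\alpha_2)\equiv 0\pmod 2$ for two simultaneously divisible squares is exactly what drops out of it, but it must be derived on $X_t$ itself, not imported from the Hilbert square.
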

Notice that $\Lambda$ contains (many) couples $\alpha_1,\alpha_2$ which satisfy~\eqref{intermatrix}; it follows that there exists $\alpha\in\Lambda$ such that  for every
 $t\in\cM^0_{X_0}$ the class of $\psi_t(\alpha)^2$ in
 $H^4(X_t;\ZZ)/Tors$ is indivisible. There exists $[\sigma]\in\Omega_{\Lambda}$ such that  
 \begin{equation}\label{persig}
 \sigma^{\bot}\cap\Lambda=\ZZ\alpha.
\end{equation}
 By~\Ref{thm}{tuttiper} there exists $t\in\cM_{X_0}$ such that $\cP(t)=[\sigma]$; Equality~\eqref{nersev} gives that
\begin{equation}\label{nersev}
H^{1,1}_{\ZZ}(X_t)=\ZZ\alpha.
\end{equation}
Since $q(\psi_t(\alpha))=2>0$ the HK manifold $X_t$ is projective by~\Ref{thm}{procri}; by~\eqref{nersev} either $\psi_t(\alpha)$ or   $\psi_t(-\alpha)$ is ample and hence we may assume that $\psi_t(\alpha)$ is ample. Let $X':=X_t$ and $H'$ be the divisor class such that $c_1(\cO_{X'}(H'))=\psi_t(\alpha)$; $X'$ is a first approximation to the deformation of $X_0$ that we will consider. The reason for requiring that $\psi_t(\alpha)^2$ be indivisible in
 $H^4(X_t;\ZZ)/Tors$ will become apparent in the sketch of the proof of~\Ref{thm}{mainthm1}.   
 \begin{rmk}\label{rmk:casodefo}
If $X$ is a deformation of $K3^{[2]}$ and $\alpha\in H^2(X;\ZZ)$ is an arbitrary class such that $q(\alpha)=2$ then the class of $\alpha^2$ in $H^4(X;\ZZ)/Tors$ is not divisible, see Proposition~3.6 of~\cite{og3}. 
\end{rmk}
Let $\pi\colon\cX\to S$ be a representative of the deformation space $Def(X',H')$. Thus letting $X_s:=\pi^{-1}(s)$ there exist $0\in S$ and a given isomorphism $X_0\overset{\sim}{\lra} X'$ and moreover there is a divisor-class $\cH$ on $\cX$  which restricts to $H'$ on $X_0$; we let $H_s:=\cH|_{X_s}$. We will replace $(X',H')$ by $(X_s,H_s)$ for $s$ very general in $S$ in order to ensure that $H^4(X_s)$ has the simplest possible Hodge structure. First we describe the Hodge substructures of  $H^4(X_s)$ that are forced by the Beauville-Bogomolov quadratic form and the integral $(1,1)$ class $\psi_t(\alpha)$. Let $X$ be a HK manifold. The Beaville-Bogomolov quadratic form $q_X$ provides us with a non-trivial class $q_X^{\vee}\in H^{2,2}_{\QQ}(X)$. In fact since $q_X$ is non-degenerate it defines an isomorphism
\begin{equation}
L_X\colon H^2(X)\overset{\sim}{\lra} H^2(X)^{\vee}.
\end{equation}
Viewing $q_X$ as a symmetric tensor in $H^2(X)^{\vee}\otimes H^2(X)^{\vee}$ and applying $L_X^{-1}$ we get a class $(L_X^{-1}\otimes L_X^{-1})(q_X)\in H^2(X)\otimes H^2(X)$; applying the cup-product map $H^2(X)\otimes H^2(X)\to H^4(X)$ to $(L_X^{-1}\otimes L_X^{-1})(q_X)$ we get an element $q_X^{\vee}\in H^{4}(X;\QQ)$ which is of type $(2,2)$ by Equation~\eqref{tipoqu}. Now we assume that $X$ is a numericla Hilbert square and that $H$ is a divisor class such that $q(H)=2$. Let $h:=c_1(\cO_X(H))$. We have an orthogonal (with respect to $q_X$) direct sum decomposition
\begin{equation}\label{h2decomp}
 H^2(X)=\CC h\wh{\oplus} h^{\bot}
\end{equation}
 into Hodge substructures of levels $0$ and $2$ respectively.
 Since $b_2(X)=23$ we get by~\Ref{crl}{ventitre}  that cup-product defines  an isomorphism
 \begin{equation}\label{symmeq}
Sym^2 H^2(X)\overset{\sim}{\lra} H^4(X).
\end{equation}
Because of~\eqref{symmeq} we will identify $H^4(X)$ with $Sym^2 H^2(X)$.
Thus~\eqref{h2decomp} gives a direct sum decomposition
\begin{equation}\label{symmdecomp}
    H^4(X)=\CC h^2\oplus \left(\CC h\otimes h^{\bot}\right)\oplus Sym^2(h^{\bot})
\end{equation}
into Hodge substructures of levels $0$, $2$ and $4$
 respectively. As is easily checked $q_X^{\vee}\in(\CC h^2\oplus Sym^2(h^{\bot})$.
 Let
\begin{equation}\label{wdef}
    W(h):=(q^{\vee})^{\bot}\cap Sym^2(h^{\bot}).
\end{equation}
(To avoid misunderstandings: the first orthogonality is with respect to the intersection form on
$H^4(X)$, the second one is with respect to
$q_X$.) One proves easily (see Claim~3.1 of~\cite{og3}) that $W(h)$ is a codimension-$1$ rational
sub Hodge structure of $Sym^2(h^{\bot})$, and that we have a direct sum
decomposition
\begin{equation}\label{smalldecomp}
\CC h^2\oplus Sym^2(h^{\bot})=\CC h^2\oplus \CC q^{\vee}\oplus W(h).
\end{equation}
Thus   we have the  decomposition
\begin{equation}\label{newsymmdecomp}
    H^4(X;\CC)=\left(\CC h^2\oplus \CC q^{\vee}\right)\oplus
    \left(\CC h\otimes h^{\bot}\right)\oplus W(h)
\end{equation}
into sub-H.S.'s of levels $0$, $2$ and $4$
respectively. The following result is  Proposition~3.2 of~\cite{og3}.
\begin{clm}\label{clm:hodgeprop}
Keep notation as above.  Let  $s\in S$ be very general i.e.~outside a countable union of proper analytic subsets of $S$. Then the following hold: 
 \begin{itemize}
\item[(1)]
  $H^{1,1}_{\ZZ}(X_s)=\ZZ h_s$ where $h_s=c_1(\cO_{X_s}(H_s))$.
\item[(2)]
Let $\Sigma\in Z_1(X_s)$ be an integral algebraic
$1$-cycle on $X_s$ and $cl(\Sigma)\in
H^{3,3}_{\QQ}(X_3)$ be its Poincar\'e dual. Then
$cl(\Sigma)=m h_s^3/6$ for some $m\in\ZZ$.
\item[(3)]
If $V\subset H^4(X_s)$ is a rational sub Hodge
structure then $V=V_1\oplus V_2\oplus V_3$ where
$V_1\subset\left(\CC h_s^2\oplus \CC q_{X_s}^{\vee}\right)$,
$V_2$ is either $0$ or equal to $\CC h_s\otimes
h_s^{\bot}$ and $V_3$ is either $0$ or equal to
$W(h_s)$.
\item[(4)]
The image of $h_s^2$ in $H^4(X_s;\ZZ)/Tors$ is indivisible.
\item[(5)]
$H^{2,2}_{\ZZ}(X_s)/Tors\subset\ZZ(h_s^2/2) \oplus\ZZ
(q_{X_s}^{\vee}/5)$.
\end{itemize}
\end{clm}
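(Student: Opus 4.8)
\emph{Proof proposal.} The plan is to transfer every assertion to the period point of $X_s$. Recall that $S=Def(X',H')$ is smooth of dimension $20$ and that, by infinitesimal Torelli and local surjectivity of the period map, $S$ maps locally isomorphically onto an open subset of $\Omega_{h^{\bot}}$, the period domain of the abstract lattice $h^{\bot}:=\alpha^{\bot}\subset\Lambda$ (where $q(\alpha)=2$ by~\eqref{intermatrix}); hence the preimage in $S$ of a countable union of proper closed analytic subsets of $\Omega_{h^{\bot}}$ is again such, and ``$s$ very general'' means ``$s$ outside such a set''. Part~(1) is then immediate: for $[\sigma]$ outside $\bigcup_{\beta\in\Lambda\setminus\QQ\alpha}\{[\tau]\in\Omega_{h^{\bot}}\mid(\tau,\beta)_{\Lambda}=0\}$ — each term proper since $\Omega_{h^{\bot}}$ spans $h^{\bot}\otimes\CC$ — one has $\sigma^{\bot}\cap\Lambda=\ZZ\alpha$, so by the Lefschetz $(1,1)$ theorem $H^{1,1}_{\ZZ}(X_s)=\ZZ h_s$; and since $q_{X_s}(h_s)=2>0$ the fourfold $X_s$ is then projective by~\Ref{thm}{procri} with $\pm h_s$ ample. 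Part~(4) is a deformation-invariant statement, so it suffices that $(h')^2$ be indivisible in $H^4(X';\ZZ)/Tors$; that is exactly the reason $\alpha=\alpha_i$ was picked among the classes of~\Ref{lmm}{eccacca}, applied to the marked point $(X',\cdot)$ of $\cM^0_{X_0}$.

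For Part~(2): since $X_s$ is projective with $\pm h_s$ ample, hard Lefschetz gives an isomorphism $\cup h_s^2\colon H^2(X_s;\QQ)\overset{\sim}{\to}H^6(X_s;\QQ)$ of Hodge structures of bidegree $(2,2)$, whence $H^{3,3}_{\QQ}(X_s)=h_s^2\cdot H^{1,1}_{\QQ}(X_s)=h_s^2\cdot\QQ h_s=\QQ h_s^3$ by Part~(1); so $cl(\Sigma)=c\,h_s^3$ for some $c\in\QQ$. To bound the denominator, note that $q_{X_s}(h_s)=2\not\equiv6\pmod8$ forces $h_s$ to have divisibility $1$, so $(h_s,\cdot)_{X_s}$ is onto $\ZZ$, and~\eqref{polarizzo} with $c_{X_s}=1$ gives $\int_{X_s}h_s^3\wedge\beta=3(h_s,h_s)_{X_s}(h_s,\beta)_{X_s}=6(h_s,\beta)_{X_s}$. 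Hence $\int_{\Sigma}\beta=6c\,(h_s,\beta)_{X_s}$ sweeps out $6c\,\ZZ$ as $\beta$ runs through $H^2(X_s;\ZZ)$; being always integral, $6c\in\ZZ$ and $cl(\Sigma)=m\,h_s^3/6$.

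Part~(3) is the heart of the matter. First I would list the rational sub-Hodge structures of $H^2(X_s;\QQ)=\QQ h_s\oplus T_s$, where $T_s:=h_s^{\bot}\cap H^2(X_s;\QQ)$: a sub-HS $U$ has $U^{2,0}$ equal to $0$, whence $U\subseteq H^{1,1}_{\QQ}(X_s)=\QQ h_s$, or to $H^{2,0}(X_s)$, whence $U$ contains the smallest rational sub-HS carrying $\sigma_s$, which for very general $s$ is all of $T_s$; so the only sub-HS are $0$, $\QQ h_s$, $T_s$, $H^2(X_s;\QQ)$. By~\Ref{crl}{ventitre} cup product identifies $H^4(X_s;\QQ)$ with $Sym^2 H^2(X_s;\QQ)$, and~\eqref{newsymmdecomp} presents it as a sum of sub-HS's $P_0:=\QQ h_s^2\oplus\QQ q_{X_s}^{\vee}$ (Tate type), $P_2:=h_s\cdot T_s\cong T_s(-1)$ (simple for very general $s$), and $P_4:=W(h_s)_{\QQ}$ of~\eqref{wdef}, of level $4$. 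Being of pairwise distinct levels, these three share no simple constituent once one knows $P_4$ is \emph{simple}; granting that, semisimplicity of the polarizable $H^4(X_s;\QQ)$ forces any sub-HS $V$ to split as $V=(V\cap P_0)\oplus(V\cap P_2)\oplus(V\cap P_4)$ with $V\cap P_2\in\{0,P_2\}$ and $V\cap P_4\in\{0,P_4\}$, which is the assertion; applied to the maximal Tate sub-HS it also gives $H^{2,2}_{\QQ}(X_s)=P_0$, since $P_2$ carries no Hodge class ($\QQ h_s\cap h_s^{\bot}=0$) and $P_4$ none (being simple of level $4$). The simplicity of $P_4=W(h_s)_{\QQ}$ for very general $s$ is the one place that genuinely uses the maximality of the variation over $S$, and is the step I expect to be the real obstacle. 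I would prove it by Noether--Lefschetz: $W(h_s)^{4,0}=\CC\sigma_s^2$ is a line, so any proper sub-HS of $W(h_s)_{\QQ}$ misses $\sigma_s^2$, hence so does a complementary sub-HS, which is therefore $0$ — provided that at a very general point no proper rational subspace of $W(h_s)_{\QQ}$ contains $\sigma_s^2$; and this holds because, for each proper rational $W'\subsetneq W(h_s)_{\QQ}$, the locus $\{[\sigma]\mid\sigma^2\in W'_{\CC}\}$ is \emph{proper} in $\Omega_{h^{\bot}}$, the map $[\sigma]\mapsto[\sigma^2]$ being non-degenerate into $\PP(W(h_s)_{\CC})$ — a hyperplane of $W(h_s)_{\CC}$ is cut by a quadratic form on $h_s^{\bot}$, and a quadratic form vanishing on the irreducible quadric $\{(\sigma,\sigma)=0\}$ is a multiple of $(\cdot,\cdot)$, which corresponds to $W(h_s)_{\CC}$ itself, not to a proper hyperplane. (Equivalently: the generic Mumford--Tate group of the variation over $S$ is the full $SO(h_s^{\bot}\otimes\QQ)$ up to a torus, so $H^4$ breaks into its irreducible subrepresentations, which are precisely $P_0$'s lines, $P_2$, and $P_4$.)

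Finally Part~(5): by Part~(3), $H^{2,2}_{\ZZ}(X_s)/Tors$ is a lattice in $\QQ h_s^2\oplus\QQ q_{X_s}^{\vee}$; write $\gamma=a\,h_s^2+b\,q_{X_s}^{\vee}$. Expanding $q_{X_s}^{\vee}$ in a $q_{X_s}$-dual basis of $H^2$ and applying~\eqref{polarizzo} with $c_{X_s}=1$ and $b_2(X_s)=23$ one computes $\int_{X_s}q_{X_s}^{\vee}\wedge\alpha^2=(b_2+2)q_{X_s}(\alpha)=25\,q_{X_s}(\alpha)$, $\int_{X_s}(q_{X_s}^{\vee})^2=(b_2+2)b_2=575$, and $\int_{X_s}h_s^2\wedge\alpha^2=2q_{X_s}(\alpha)+2(h_s,\alpha)_{X_s}^2$. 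Polarizing, $\int_{X_s}\gamma\wedge\beta_1\wedge\beta_2=(2a+25b)(\beta_1,\beta_2)_{X_s}+2a(h_s,\beta_1)_{X_s}(h_s,\beta_2)_{X_s}$ must be integral for all $\beta_1,\beta_2\in H^2(X_s;\ZZ)$. Taking $\beta_1=\beta_2=\beta$ with $q_{X_s}(\beta)=0$ and $(h_s,\beta)_{X_s}=1$ (such a $\beta$ exists since $h_s$ has divisibility $1$ and $\Lambda$ contains a hyperbolic plane) gives $2a\in\ZZ$; then choosing $\beta_1,\beta_2$ with $(\beta_1,\beta_2)_{X_s}=1$ gives $2a+25b\in\ZZ$, so $25b\in\ZZ$. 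Writing $a=a'/2$ and $b=b'/25$ with $a',b'\in\ZZ$, integrality of $\int_{X_s}\gamma^2=12a^2+100ab+575b^2=3a'^2+2a'b'+\tfrac{23}{25}b'^2$ forces $25\mid b'^2$, hence $5\mid b'$ and $b\in\tfrac15\ZZ$. Therefore $\gamma\in\ZZ(h_s^2/2)\oplus\ZZ(q_{X_s}^{\vee}/5)$, as claimed. Apart from the simplicity of $W(h_s)$ flagged above, everything here runs on hard Lefschetz, the Fujiki relation~\eqref{polarizzo}, and elementary computations in the lattice $\Lambda$.
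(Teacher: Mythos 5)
Your argument is correct and complete. For calibration: the paper does not actually prove this claim --- it is quoted as Proposition~3.2 of~\cite{og3} --- and the only indication of the intended proof appears later, in the proof of \Ref{prp}{moltogen}, where the dependencies are recorded: (1) and (3) are genericity statements established by standard countability arguments, (4) comes from the choice of $\alpha$ via \Ref{lmm}{eccacca} (cf.~\Ref{rmk}{casodefo}), (2) follows from (1), and (5) from (3) and (4). Your write-up fills in that skeleton with real content, and the one genuinely substantive step --- the simplicity of $W(h_s)$ for very general $s$, which drives (3) --- is handled correctly: a quadratic form on $h^{\bot}$ vanishing on the irreducible quadric $\{(\sigma,\sigma)=0\}$ is proportional to $(\,,\,)$, whose kernel in $Sym^2(h^{\bot})$ is exactly $W(h)$ (since $\int_X q^{\vee}\wedge\beta_1\wedge\beta_2=25(\beta_1,\beta_2)$ for $\beta_i\in h^{\bot}$), so the squares $\sigma^2$ span $W(h)_{\CC}$, the locus where $\sigma_s^2$ lands in any one of the countably many proper rational subspaces is proper analytic, and the complement-plus-$(4,0)$-line argument forces simplicity; this is presumably the content of Lemma~3.3 of~\cite{og3} that the paper alludes to. Your Fujiki-relation computations ($\int_X h^3\wedge\beta=6(h,\beta)_X$, $\int_X q^{\vee}\wedge\alpha^2=25\,q(\alpha)$, $\int_X(q^{\vee})^2=575$) all check out against the paper's normalization $c_X=1$ (and against $c_2(X)=6q^{\vee}/5$, which gives the known $\int c_2\wedge\alpha^2=30\,q(\alpha)$). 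The only point where you depart from the indicated route is (5): you extract the denominators $2$ and $5$ directly from integrality of intersection numbers with integral classes, bypassing (4) entirely, whereas the paper's source derives (5) from (3) and (4); both are valid, and your version has the minor advantage of being self-contained, while (4) is still needed elsewhere (for \Ref{clm}{intprop} in the proof of \Ref{thm}{mainthm1}).
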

Let $s\in S$ be such that Items(1) through (5) of~\Ref{clm}{hodgeprop} hold. Let $X:=X_s$, $H:=H_s$ and $h:=c_1(\cO_X(H))$. Since $H$ is in the positive cone and $h$ generates $H^{1,1}_{\ZZ}(X)$ we get that $H$ is ample. By construction $X$ is a deformation of our given numerical Hilbert square. The goal is to analyze the linear system $|H|$. First we compute its dimension. A computation (see pp.~564-565 of~\cite{og3}) gives that $c_2(X)=6 q_X^{\vee}/5$; it follows that Equation~\eqref{rrhilbsq} holds for numerical Hilbert squares. Thus $\chi(\cO_X(H))=6$. By Kodaira vanishing we get that $h^0(\cO_X(H))=6$. Thus we have the map
\begin{equation}\label{mappeffe}
f\colon X\dashrightarrow |H|^{\vee}\cong\PP^5.
\end{equation}
The following is the main result of~\cite{og3}.
\begin{thm}{\rm [O'Grady]}\label{thm:mainthm1}
Let $(X,H)$ be as above. One of the following holds:
 \begin{itemize}
\item[(a)]
The line-bundle $\cO_X(H)$ is globally generated and there exist an anti-symplectic involution
 $\phi\colon X\to X$ and an inclusion
 $X/\la\phi\ra\hra |H|^{\vee}$  such that the map $f$ of~\eqref{mappeffe} is identified with the composition
 \begin{equation}
X\overset{\rho}{\lra}X/\la\phi\ra\hra |H|^{\vee}
\end{equation}
where $\rho$ is the quotient map. 
\item[(b)]
The map $f$ of~\eqref{mappeffe} is birational onto its image (a hypersurface of degree between $6$ and $12$). 
\end{itemize}
\end{thm}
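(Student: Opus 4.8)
\smallskip
\noindent\emph{Proof strategy.}\
Set $Y:=\overline{f(X)}\subseteq\PP^5$. Since $h^0(\cO_X(H))=6$ and $|H|$ is complete, $Y$ is non-degenerate; and since $\Pic(X)=\ZZ h$, the system $|H|$ has no fixed divisor (a fixed component $\sim kh$ with $k\ge 1$ would force $h^0(\cO_X((1-k)H))=6$, impossible as $H$ is ample), so its indeterminacy locus has codimension $\ge 2$. Let $\tau\colon\widetilde X\to X$ resolve the indeterminacy of $f$, with $\widetilde f:=f\circ\tau\colon\widetilde X\to\PP^5$ a morphism and $\widetilde f^{*}\cO_{\PP^5}(1)=\tau^{*}h-E$, where $E\ge 0$ is $\tau$-exceptional. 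First I would show $\dim Y=4$: otherwise, after Stein factorization, $X$ admits a non-trivial fibration over a positive-dimensional base of dimension $<4$, which by Matsushita's theorems~\cite{mats1,mats2,mats3} must be a Lagrangian fibration over $\PP^{2}$, whence the pull-back of $\cO_{\PP^{2}}(1)$ is a non-zero isotropic class in $H^{1,1}_{\ZZ}(X)=\ZZ h$ --- impossible since $q_X(kh)=2k^{2}\ne 0$ for $k\ne 0$. Thus $Y$ is an irreducible hypersurface and $\widetilde f$ is generically finite of degree $\delta\ge 1$ onto $Y$; writing $d:=\deg Y$, the Fujiki relation~\eqref{belleq} together with the projection formula gives $\delta d=\int_{\widetilde X}(\tau^{*}h-E)^{4}\le\int_X h^{4}=3\,q_X(h)^{2}=12$, with equality if and only if $E=0$.

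The heart of the argument is to prove that $\delta\le 2$, and this is where the very general Hodge structure of $X$ is essential. By~\Ref{clm}{hodgeprop} the rational sub-Hodge-structures of $H^{4}(X)$ are precisely the sums of the summands of~\eqref{newsymmdecomp}; $H^{2,2}_{\ZZ}(X)/\mathrm{Tors}$ is contained in $\ZZ(h^{2}/2)\oplus\ZZ(q_X^{\vee}/5)$; and, decisively, $h^{2}$ is indivisible in $H^{4}(X;\ZZ)/\mathrm{Tors}$. For $\delta\ge 3$ the hypersurface $Y$ has degree $d\le 4$, and one plays off the primitive middle cohomology of $Y$ against these rigidity statements to reach a contradiction: a Galois cover of degree $\ge 3$ would produce a Hodge isometry of $H^{2}(X;\ZZ)$ of order $\ge 3$, impossible when $\Pic(X)=\ZZ h$, while in the remaining cases the correspondence $\widetilde f^{*}\widetilde f_{*}$ would force either an extra rational sub-Hodge-structure of $H^{4}(X)$ forbidden by~\Ref{clm}{hodgeprop}(3) or a non-trivial divisor of the class $h^{2}$, contradicting~\Ref{clm}{hodgeprop}(4). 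This is exactly the reason the indivisibility of $h^{2}$ was imposed on the pair $(X,H)$. Hence $\delta\in\{1,2\}$.

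If $\delta=1$, then $f$ is birational onto $Y$, a non-degenerate irreducible hypersurface of degree $d=\int_{\widetilde X}(\tau^{*}h-E)^{4}\le 12$. A non-degenerate irreducible hypersurface of degree $\le 5$ in $\PP^{5}$ is uniruled, hence cannot be birational to $X$, which carries a non-zero holomorphic $2$-form; so $6\le d\le 12$ and we are in alternative~(b). If $\delta=2$, then $Y$ is dominated $2{:}1$ by $X$; the holomorphic $4$-form $\sigma^{2}$ on $X$, being invariant under the deck transformation, descends to the smooth locus of $Y$, so $Y$ is not uniruled and $d\ge 6$; combined with $2d\le 12$ this forces $d=6$, hence $E=0$, so $\cO_X(H)$ is globally generated and --- being ample --- $f\colon X\to Y\subseteq\PP^{5}$ is a finite morphism of degree $2$. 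The deck involution $\phi\colon X\to X$ is then biregular, it fixes $h$, and so it acts on $H^{2}(X;\ZZ)$ as a Hodge isometry fixing $h$; for very general such $X$ the only possibilities for $\phi^{*}$ are $\mathrm{id}$ (excluded, since a non-trivial automorphism of an HK manifold acts non-trivially on $H^{2}$) and the involution that is $+1$ on $\CC h$ and $-1$ on $h^{\bot}\supseteq H^{2,0}(X)$. Therefore $\phi$ is anti-symplectic with one-dimensional $(+1)$-eigenspace $\CC h$ on $H^{2}(X)$, and --- its fixed locus being Lagrangian of codimension $2$ --- $f$ is \'etale in codimension $1$; by the analysis of such double covers of sextic fourfolds in~\cite{og2} the sextic $Y=X/\langle\phi\rangle\hookrightarrow|H|^{\vee}=\PP^{5}$ is an EPW-sextic and $f=f_A$ its natural double cover. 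This is alternative~(a).

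The step I expect to be the main obstacle is the inequality $\delta\le 2$: the uniruledness dichotomy that handles $\delta\in\{1,2\}$ says nothing for $\delta\ge 3$, since a hypersurface of degree $\le 4$ in $\PP^{5}$ can well be dominated by an HK fourfold; closing this gap genuinely requires the rigidity of $H^{4}(X)$ furnished by~\Ref{clm}{hodgeprop}, and in particular the indivisibility of $h^{2}$.
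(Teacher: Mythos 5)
Your skeleton matches the paper's: reduce to the dichotomy ``$\dim Y=4$ and $\deg f=2$'' versus everything else, handle the degree-$2$ case by promoting the deck transformation to a biregular anti-symplectic involution (your step ``$\Pic(X)=\ZZ[H]$ and $K_X\sim 0$, hence $\phi$ is regular'' is exactly the paper's), and kill the remaining cases using the very general Hodge structure of $H^4(X)$ together with the indivisibility of $h^2$. But the step you yourself flag as the main obstacle --- excluding $\dim Y<4$ and $\deg f\ge 3$ --- is genuinely missing, and your sketch of it would not go through as written. A degree-$3$ cover need not be Galois, so ``a Hodge isometry of order $\ge 3$'' is not available; and ``the correspondence $\widetilde f^{*}\widetilde f_{*}$ forces an extra sub-Hodge structure or divides $h^2$'' is a hope, not an argument. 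The paper's actual mechanism is \Ref{clm}{intprop}: Items~(4) and~(5) of \Ref{clm}{hodgeprop} imply that for distinct $D_1,D_2\in|H|$ the surface $D_1\cap D_2$ is \emph{reduced and irreducible}. This single geometric statement, applied to pullbacks of codimension-$2$ linear sections of $Y$, rules out most of the bad cases (e.g.\ $Y$ a quadric: some $\PP^3$ cuts $Y$ in a reducible surface, hence some $D_1\cap D_2$ is reducible); the residual cases (e.g.\ $Y$ a smooth cubic with $\deg f=4$) are excluded by comparing the discriminant of the intersection form on $f^{*}H^4(Y)_{pr}$, which Item~(3) of \Ref{clm}{hodgeprop} forces into $\CC h\otimes h^{\bot}$, with that of $H^4(Y)_{pr}$. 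You never formulate or use the $D_1\cap D_2$ statement, which is precisely what the indivisibility of $h^2$ was engineered to deliver.

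Two further points. Your exclusion of $\dim Y<4$ via Matsushita is not valid as stated: $f$ is a priori only a rational map, Matsushita's theorems apply to regular fibrations of the HK manifold itself, and the resolved total space $\widetilde X$ is no longer holomorphic symplectic; the paper disposes of low-dimensional images with \Ref{clm}{intprop} as well. And in the birational case the justification ``uniruled, hence cannot be birational to $X$, which carries a non-zero holomorphic $2$-form'' is wrong as stated: uniruled fourfolds can carry holomorphic $2$-forms (e.g.\ $\PP^1\times\PP^1\times S$ with $S$ a $K3$); the correct reason is that $X$ has trivial canonical bundle and uniruledness is a birational invariant. Similarly, in the degree-$2$ case the descent of $\sigma^2$ to the smooth locus of $Y$ does not by itself exclude uniruledness of $Y$ unless you extend the form to a resolution; the paper avoids this entirely by deducing alternative~(a) directly from the biregular involution, without first pinning down $\deg Y$ or proving $E=0$.
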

\n
{\it Sketch  of proof.\/}
The following result follows from Items~(4) and~(5) of~\Ref{clm}{hodgeprop} plus a straightforward computation, see Proposition~4.1 of~\cite{og3}.
\begin{clm}\label{clm:intprop}
 If $D_1,D_2\in |H|$
are distinct then $D_1\cap D_2$ is a reduced
irreducible surface.
\end{clm}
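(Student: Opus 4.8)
The plan is to determine the fundamental cycle of $T:=D_1\cap D_2$ by combining the description of $H^{2,2}_\ZZ(X)$ in \Ref{clm}{hodgeprop} with two positivity constraints on the classes of the irreducible components of $T$: one coming from the ampleness of $H$, the other from the holomorphic symplectic form. First, by Item~(1) of \Ref{clm}{hodgeprop} we have $\Pic(X)=\ZZ h$, so, $H$ being ample and $h^0(\cO_X)=1$, every member of $|H|$ is a prime divisor; hence $D_1$ and $D_2$ are distinct prime divisors, $D_2$ cuts out a nonzero effective Cartier divisor on the integral scheme $D_1$, and $T$ is purely $2$-dimensional, a local complete intersection, with fundamental cycle $[T]=h^2$ in $H^4(X;\ZZ)$. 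Write $[T]=\sum_i m_i[T_i]$ with the $T_i$ the distinct reduced irreducible components and $m_i\ge 1$. Each $[T_i]$ lies in $H^{2,2}_\ZZ(X)/\mathrm{Tors}$, hence, by Item~(5) of \Ref{clm}{hodgeprop}, in the rank-two lattice $N:=\ZZ u\oplus\ZZ w$ with $u:=h^2/2$ and $w:=q_X^{\vee}/5$; so $[T_i]=a_iu+b_iw$ with $a_i,b_i\in\ZZ$, and the goal is to show there is exactly one component, with $m_1=1$ and $[T_1]=h^2$.

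Next I would record the intersection numbers. From the polarized Fujiki relation \eqref{polarizzo} with $c_X=1$, together with $q_X(h)=2$, $b_2(X)=23$ and the identity $\int_X q_X^{\vee}\cdot\alpha\cdot\beta=(b_2(X)+2)(\alpha,\beta)_X=25\,(\alpha,\beta)_X$ (immediate from the definition of $q_X^{\vee}$ and \eqref{polarizzo}), one gets $\int_X u^2=3$, $\int_X u\cdot w=5$, $\int_X h^4=12$, and, for a generator $\sigma$ of $H^{2,0}(X)$, using $(h,\sigma)_X=0$ from \eqref{tipoqu}, also $\int_X u\cdot\sigma\ov{\sigma}=(\sigma,\ov{\sigma})_X$ and $\int_X w\cdot\sigma\ov{\sigma}=5\,(\sigma,\ov{\sigma})_X$, with $(\sigma,\ov{\sigma})_X>0$. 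Hence, for every component, $\int_X[T_i]\cdot h^2=2(3a_i+5b_i)$ and $\int_X[T_i]\cdot\sigma\ov{\sigma}=(a_i+5b_i)(\sigma,\ov{\sigma})_X$. Since $H$ is ample, $\int_X[T_i]\cdot h^2>0$, i.e.\ $3a_i+5b_i\ge 1$. Since the holomorphic symplectic form pulls back under a resolution $\nu\colon\wt{T}_i\to T_i$ to a holomorphic $2$-form, whose wedge with its conjugate integrates to a non-negative number over the smooth surface $\wt{T}_i$, we get $\int_X[T_i]\cdot\sigma\ov{\sigma}\ge 0$, i.e.\ $a_i+5b_i\ge 0$. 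Finally $\sum_i m_i[T_i]=h^2=2u$ forces $\sum_i m_ia_i=2$ and $\sum_i m_ib_i=0$, whence $\sum_i m_i(a_i+5b_i)=2$ and $\sum_i m_i\cdot 2(3a_i+5b_i)=\int_X h^4=12$.

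Then the case analysis is forced. If $a_i+5b_i=0$ for some $i$, then $a_i=-5b_i$, so $3a_i+5b_i=-10b_i\ge 1$, hence $b_i\le-1$ and $2(3a_i+5b_i)=-20b_i\ge 20$, contradicting $2(3a_i+5b_i)\le\sum_j m_j\cdot 2(3a_j+5b_j)=12$; so $a_i+5b_i\ge 1$ for all $i$. Now $\sum_i m_i(a_i+5b_i)=2$ with $m_i\ge 1$ leaves only three possibilities. (i) A single component with $m_1=1$ and $a_1+5b_1=2$: with $\sum m_ia_i=2$, $\sum m_ib_i=0$ this gives $a_1=2$, $b_1=0$, so $[T]=[T_1]=h^2$ is the class of a reduced irreducible surface. (ii) A single component with $m_1=2$ and $a_1+5b_1=1$, so $[T_1]=h^2/2=u$, which is excluded by Item~(4) of \Ref{clm}{hodgeprop} (that item says $u\notin H^4(X;\ZZ)/\mathrm{Tors}$, hence $u\notin H^{2,2}_\ZZ(X)/\mathrm{Tors}$). (iii) Two components with $m_1=m_2=1$ and $a_1+5b_1=a_2+5b_2=1$: using $3a_i+5b_i=1+2a_i\ge 1$ together with $a_1+a_2=2$, $b_1+b_2=0$ this forces $(a_1,a_2)\in\{(0,2),(1,1),(2,0)\}$, all impossible, since $(1,1)$ gives $b_1=b_2=0$ and $[T_1]=[T_2]=u$ (against Item~(4), and against distinctness), while the other two give $b_i=1/5\notin\ZZ$. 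Only~(i) survives, so $T$ is reduced and irreducible.

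I expect the main obstacle to be exactly the exclusion of a component whose class is not a multiple of $h^2$, i.e.\ has $b_i\ne 0$: ampleness of $H$ alone does not forbid it — numerically one could split $h^2=w+(2u-w)$, both summands being $h^2$-positive — and the decisive point is the second positivity $\int_{T_i}\sigma\wedge\ov{\sigma}\ge 0$, obtained by restricting the holomorphic symplectic form to a resolution of each component, together with \eqref{tipoqu} to evaluate the pairings against $u$ and $w$. Everything else (the prime-divisor reduction, the Fujiki computations, the final bookkeeping) is routine.
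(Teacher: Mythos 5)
Your proof is correct and follows exactly the route the paper indicates: it reduces the class of each component of $D_1\cap D_2$ to the lattice $\ZZ(h^2/2)\oplus\ZZ(q_X^{\vee}/5)$ via Item~(5) of \Ref{clm}{hodgeprop}, pins it down with the two positivity constraints (ampleness of $H$ and $\int_{T_i}\sigma\wedge\ov{\sigma}\ge 0$) and the Fujiki relation, and uses the indivisibility of $h^2$ from Item~(4) to kill the non-reduced and reducible cases --- precisely the ingredients the paper cites (and the stated reason for arranging Item~(4) in the first place). The computations $\int_X q_X^{\vee}\cdot\alpha\cdot\beta=25(\alpha,\beta)_X$, $\int_X u^2=3$, $\int_X u\cdot w=5$ all check out, so no gaps.
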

In fact we chose $h$ such that $h^2$ is not divisible in $H^4(X;\ZZ)/Tors$ precisely to ensure that the above claim holds. Let $Y\subset\PP^5$ be the image of $f$ (to be precise the closure of the image by $f$ of its  regular points). Thus (abusing notation) we have $f\colon X\dashrightarrow Y$. Of course $\dim Y\le 4$. Suppose that $\dim Y=4$ and that $\deg f=2$. Then there exists a non-trivial rational involution $\phi\colon X\dashrightarrow X$ commuting with $f$. Since $\Pic (X)=\ZZ[H]$ we get that $\phi^{*}H\sim H$; since $K_X\sim 0$ it follows that $\phi$ is regular; it follows easily that~(a) holds. Thus it suffices to reach a contradiction assuming that $\dim Y<4$ or $\dim Y=4$ and $\deg f>2$. One goes through a (painful) case-by-case analysis. In each case, with the exception of $Y$ a quartic $4$-fold, one invokes either~\Ref{clm}{intprop} or Item~(3) of~\Ref{clm}{hodgeprop}. We give two \lq\lq baby\rq\rq cases.   First suppose that $Y$ is a quadric $4$-fold. Let $Y_0$ be an open dense subset containing the image by $f$ of its regular points. There exists a $3$-dimensional linear space $L\subset\PP^5$ such that $L\cap Y_0$ is a reducible surface. Now $L$ corresponds to the intersection of two distinct $D_1,D_2\in |H|$ and since $L\cap Y_0$ is  reducible so is $D_1\cap D_2$ - that contradicts~\Ref{clm}{intprop}. As second example we suppose that $Y$ is a smooth cubic $4$-fold and $f$ is regular. Notice that
\begin{equation}\label{dodici}
H\cdot H\cdot H\cdot H=12
\end{equation}
 by~\eqref{polarizzo} and hence $\deg f=4$.
Let $H^4(Y)_{pr}\subset H^4(Y)$ be the primitive cohomology. By Item~(3) of~\Ref{clm}{hodgeprop} we must have $f^{*} H^4(Y)_{pr}\subset \CC h\otimes h^{\bot}$. The restriction to  $f^{*} H^4(Y;\QQ)_{pr}$ of the intersection form on $H^4(X)$ equals  the intersection form on $H^4(Y;\QQ)_{pr}$ multiplied by $4$ because $\deg f=4$; one gets a contradiction by comparing discriminants. 
\qed
\begin{cnj}\label{cnj:adaesse}
Item~(b) of~\Ref{thm}{mainthm1} does not occur.
\end{cnj}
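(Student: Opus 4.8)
\medskip\noindent\emph{Proof proposal.} Suppose case~(b) occurred, and write $d:=\deg Y\in\{6,\dots,12\}$ (\Ref{thm}{mainthm1}). The plan is to build out of $Y$ a smooth projective fourfold birational to $X$ and to contradict $h^{2,0}(X)=1$ using that $H^{2}(\cO)$ of a hypersurface vanishes. First I would resolve the indeterminacy of $f$: a composition $\rho\colon\widetilde X\to X$ of blow-ups with smooth centres inside $\mathrm{Bs}\,|H|$ produces a birational morphism $\overline f\colon\widetilde X\to Y$ with $\overline f^{*}\cO_Y(1)=\rho^{*}\cO_X(H)(-E)$, $E\ge 0$. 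Since $h^{0}(\cO_X(H))=6\neq 1$, the system $|H|$ has no fixed part, so $\mathrm{Bs}\,|H|$ has codimension $\ge 2$; hence each $\rho$-exceptional prime divisor $E_i$ has centre of dimension $\le 2$, whence $\rho_{*}[E_i]=0$ in $H^{2}(X;\QQ)$, and — $X$ being hyperkähler, so $\omega_X$ trivial — $K_{\widetilde X}=\sum a_iE_i$ with all $a_i>0$. In particular $\widetilde X$ is smooth, projective and birational to $X$, so $\kappa(\widetilde X)=0$, $\omega_{\widetilde X}$ is effective, and (holomorphic $2$-forms being a birational invariant) $h^{2,0}(\widetilde X)=h^{2,0}(X)=1$.

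Next I would exploit the discrepancy identity. Granting that $Y$ is normal, it is a normal Gorenstein fourfold with $\omega_Y=\cO_Y(d-6)$, and $\Delta:=K_{\widetilde X}-\overline f^{*}K_Y$ is supported on $\mathrm{Exc}(\overline f)$, whose divisorial part lies among the $E_i$ and the strict transforms $\widetilde P$ of prime divisors $P\subset X$ contracted by $f$. Intersecting $\sum a_iE_i\equiv(d-6)(\rho^{*}H-E)+\Delta$ with $(\rho^{*}H)^{3}$ annihilates every exceptional term (as $\rho_{*}[E_i]=0$) and leaves $\sum_{P}c_{P}m_{P}=-(d-6)$, where $P\in|m_{P}H|$ and $c_{P}\in\ZZ$ is the discrepancy along $\widetilde P$. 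If $f$ contracts no divisor this sum is empty, so $d=6$; then $\Delta\equiv\sum a_iE_i$, which — $h^{0}(\omega_{\widetilde X})$ being $1$ — forces $\Delta=\sum a_iE_i\ge 0$, so $Y$ has canonical, hence (being Gorenstein) rational, singularities. Finally, for any hypersurface $Y\subset\PP^{5}$ the sequence $0\to\cO_{\PP^{5}}(-d)\to\cO_{\PP^{5}}\to\cO_Y\to 0$ gives $H^{2}(Y,\cO_Y)=0$, so rationality of the singularities yields $H^{2}(\widetilde X,\cO_{\widetilde X})=H^{2}(Y,\cO_Y)=0$; but by Hodge symmetry on $\widetilde X$ this equals $h^{0,2}(\widetilde X)=h^{2,0}(\widetilde X)=1$. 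The contradiction rules out case~(b).

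Two inputs remain, of unequal weight. Normality of $Y$ is the minor one: granting that $f$ contracts no divisor the identity above gives $d=6$, so one need only treat sextic $Y$, and there $h^{0}(\widetilde X,\omega_{\widetilde X}^{\otimes m})=1$ for all $m$ forces the conductor of the normalisation $\widetilde Y\to Y$ to vanish. The step I expect to be the \emph{main obstacle} is showing that $f$ contracts no divisor of $X$, i.e.\ that $\overline f$ contracts no strict transform $\widetilde P$: when $f$ is a morphism this is automatic because $\cO_X(H)|_{P}$ is ample, but in general one must understand the map defined by $|H|$ near $\mathrm{Bs}\,|H|$, using $\mathrm{Pic}(X)=\ZZ h$, the big-and-nefness of $\overline f^{*}\cO_Y(1)$, and — to forbid a surface image $f(P)$, which would make $D_1\cap D_2$ reducible — the irreducibility statement of \Ref{clm}{intprop}. (The geometrically natural guess that $Y$, for $d=6$, is an EPW-sextic, identifiable through the structure of $H^{4}(X)$ in \Ref{clm}{hodgeprop}, leads to the same $H^{2}$-contradiction, since EPW-sextics have rational singularities; so it is genuinely the control of $\mathrm{Exc}(\overline f)$ that is at stake.)
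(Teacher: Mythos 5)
First, a point of order: the paper does not prove this statement --- it is \Ref{cnj}{adaesse}, an open conjecture, and the point of the surrounding discussion is precisely that its truth would imply that numerical Hilbert squares are deformations of $K3^{[2]}$. So there is no proof of the paper's to compare yours against; I can only assess your argument on its own terms. The strategy --- pass to a smooth model $\widetilde X$ dominating the hypersurface $Y$, show $Y$ has rational singularities, and play $H^2(Y,\cO_Y)=0$ off against $h^{0,2}(X)=1$ --- is coherent, and in the ``ideal'' configuration ($Y$ a normal sextic, no $f$-contracted divisors) your discrepancy computation does close: $\Delta=\sum a_iE_i\ge 0$, so $Y$ is Gorenstein canonical, hence has rational singularities by Elkik, contradiction. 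But the two inputs you defer are not loose ends to be tidied up; together they are essentially the conjecture, and your plan for one of them is circular.

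The circularity: the identity $\sum_P c_Pm_P=-(d-6)$ comes from writing $K_{\widetilde X}=\overline f^{*}K_Y+\Delta$ with $\Delta$ supported on the exceptional locus, which presupposes $Y$ normal with $K_Y$ Cartier; you then propose to prove normality only \emph{after} using that identity to force $d=6$. That this cannot be repaired by bookkeeping is shown by the sub-case of (b) in which $\cO_X(H)$ happens to be globally generated: there $f$ is a morphism, finite because $\cO_X(H)=f^{*}\cO_Y(1)$ is ample (so in particular no divisor is contracted), birational by hypothesis, hence $\deg Y=H^4=12$ and $X$ is the normalization of $Y$. Then $Y$ is \emph{necessarily} non-normal, with conductor $C\sim 6H$ --- a class with $h^0=741$ sections by~\eqref{rrhilbsq}, so nothing is contradicted --- and since $f_{*}\cO_X\neq\cO_Y$ the comparison $H^2(\cO_{\widetilde X})=H^2(\cO_Y)$ fails: $H^2(\cO_Y)=0$ coexists peacefully with $h^{0,2}(X)=1$ for a finite birational non-isomorphism. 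Your final step simply does not engage with this case, and your assertion that normality is the ``minor'' input is exactly backwards there. Excluding this finite-birational-onto-a-duodecic scenario, and excluding contracted divisors of negative discrepancy when $6<d<12$ and $|H|$ has base locus, is the hard content that has kept case (b) alive; neither \Ref{clm}{intprop} nor Item~(3) of \Ref{clm}{hodgeprop} is known to accomplish it. What you have is a clean reduction of the conjecture to two open geometric statements, not a proof.
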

As we will explain in the next subsection~\Ref{cnj}{adaesse} implies that a numerical Hilbert square is in fact a deformation of $K3^{[2]}$. The following question arised in connection with the proof of~\Ref{thm}{mainthm1}.
\begin{qst}\label{qst:comek3}
Is the following true? Let $X$ be a HK $4$-fold and $H$ an ample divisor on $X$. Then $\cO_X(2H)$ is globally generated. 
\end{qst}
The analogous question in $dim=2$ has a positive answer, see for example~\cite{may}. We notice  that if $X$ is a  $4$-fold  with trivial canonical bundle and $H$ is ample on $X$ then $\cO_X(5H)$ is globally generated by Kawamata~\cite{kawa}. The relation between~\Ref{qst}{comek3} and~\Ref{thm}{mainthm1} is the following. 
\begin{clm}
Suppose that the answer to~\Ref{qst}{comek3} is positive. Let $X$ be a numerical Hilbert square equipped with an ample divisor $H$ such that $q_X(H)=2$.  Let $Y\subset  |H|^{\vee}$ be the closure of the image of the set of regular points of the rational map  $X\dashrightarrow |H|^{\vee}$. Then one of the following holds:
\begin{itemize}
\item[(1)]
$\cO_X(H)$ is globally generated.
\item[(2)]
$Y$ is contained in a quadric.
\end{itemize}
\end{clm}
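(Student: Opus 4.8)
\noindent The plan is to compare the two vector spaces $\Sym^2 H^0(\cO_X(H))$ and $H^0(\cO_X(2H))$, which both turn out to have dimension $21$. First I would record the dimensions: a numerical Hilbert square has $K_X\cong\cO_X$ and obeys the Hirzebruch--Riemann--Roch formula~\eqref{rrhilbsq}, so Kodaira vanishing gives
\[
h^0(\cO_X(H))=\chi(\cO_X(H))=\tfrac{1}{8}(q_X(H)+4)(q_X(H)+6)=6
\]
(so that $|H|^\vee\cong\PP^5$), and likewise, since $q_X(2H)=4q_X(H)=8$,
\[
h^0(\cO_X(2H))=\chi(\cO_X(2H))=\tfrac{1}{8}\cdot 12\cdot 14=21=\binom{7}{2}=\dim\Sym^2 H^0(\cO_X(H)).
\]
Now consider the multiplication map $\mu\colon\Sym^2 H^0(\cO_X(H))\to H^0(\cO_X(2H))$. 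Since its source and target have the same finite dimension, $\mu$ is injective if and only if it is surjective, and the proof will split along this dichotomy. Fixing a basis $s_0,\dots,s_5$ of $H^0(\cO_X(H))$, used as linear coordinates on $|H|^\vee$, the rational map $f\colon X\dashrightarrow|H|^\vee$ of the statement is $x\mapsto[s_0(x):\cdots:s_5(x)]$ on the complement of the base locus $B$ of $|H|$, and $\ker\mu$ consists of the quadratic forms $Q=\sum a_{ij}X_iX_j$ on $|H|^\vee$ with $\sum a_{ij}s_is_j=0$ in $H^0(\cO_X(2H))$.

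If $\mu$ is not injective, I would take a nonzero $Q\in\ker\mu$: for every $x\in X\setminus B$ one has $Q(f(x))=\sum a_{ij}s_i(x)s_j(x)=0$, so $Q$ vanishes on $f(X\setminus B)$ and hence on its closure $Y$. Thus $Y$ lies on a quadric, which is alternative~(2).

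If instead $\mu$ is injective, it is surjective, so $H^0(\cO_X(2H))$ is spanned by products $s\cdot t$ with $s,t\in H^0(\cO_X(H))$. I would then argue by contradiction: were $\cO_X(H)$ not globally generated, choose $p\in B$; every $s\in H^0(\cO_X(H))$ vanishes at $p$, hence so does every product $s\cdot t$, hence -- by surjectivity of $\mu$ -- every section of $\cO_X(2H)$ vanishes at $p$. But the standing hypothesis is the affirmative answer to~\Ref{qst}{comek3}, which applied to the HK four-fold $X$ and the ample divisor $H$ gives that $\cO_X(2H)$ is globally generated; so some section is nonzero at $p$, a contradiction. Hence $\cO_X(H)$ is globally generated, which is alternative~(1).

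I do not anticipate a genuine obstacle: once the numerology $6=h^0(\cO_X(H))$ and $21=h^0(\cO_X(2H))=\dim\Sym^2 H^0(\cO_X(H))$ is in place the argument is essentially formal. The one spot deserving a line of care is checking that an element of $\ker\mu$ really cuts out a quadric through all of $Y$ rather than merely through the indeterminacy locus of $f$ -- immediate from the coordinate description above -- and that the hypothesis supplies global generation of $\cO_X(2H)$ itself rather than of some twist, which it does verbatim. The real content of the statement is thus entirely concentrated in~\Ref{qst}{comek3}.
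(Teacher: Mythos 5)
Your proof is correct and follows essentially the same route as the paper: the dimension count $\dim\Sym^2 H^0(\cO_X(H))=21=h^0(\cO_X(2H))$, the dichotomy on injectivity/surjectivity of the multiplication map, and the deduction of global generation of $\cO_X(H)$ from that of $\cO_X(2H)$. You merely spell out two steps the paper leaves implicit (that a kernel element of $\mu$ cuts out a quadric through $Y$, and that a base point of $|H|$ would be a base point of $|2H|$).
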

\begin{proof}
 Suppose that Item~(2) does not hold. Then  multiplication of sections defines an injection $Sym^2 H^0(\cO_X(H))\hra H^0(\cO_X(2H))$; on the other hand we have
\begin{equation}
\dim Sym^2 H^0(\cO_X(H))=21=\dim H^0(\cO_X(2H)).
\end{equation}
(The last equation holds by Equation~\eqref{rrhilbsq} - valid for numerical Hilbert squares as noticed above.) Since $\cO_X(2H)$ is globally generated it follows that  $\cO_X(H)$ is globally generated as well i.e.~Item~(1) holds. 
\end{proof}
We remark that Items~(1) and~(2) of the above claim are not mutually exclusive. In fact let $S\subset\PP^3$ be a smooth quartic surface (a $K3$) not containing lines. We have a finite map
\begin{equation}\label{marrazzo}
\begin{matrix}
 S^{[2]} & \overset{f}{\lra} & \GG r(1,\PP^3)\subset\PP^5 \\
 [Z] & \mapsto & \la Z\ra
\end{matrix}
\end{equation}
with image the Pl\"ucker quadric in $\PP^5$. Let $H:=f^{*}\cO_{\PP^5}(1)$; since $f$ is finite $H$ is ample. Moreover $q(H)=2$ because $H\cdot H\cdot H\cdot H=12$;  thus~\eqref{marrazzo} may be identified with the map associated to the complete linear system $|H|$. 
 \subsection{Double EPW-sextics, II}\label{}
\setcounter{equation}{0}
 Let $(X,H)$ be as in Item~(a) of~\Ref{thm}{mainthm1}: we proved~\cite{og2} that there exists $A\in{\mathbb LG}(\wedge^3 \CC^6)^0$ such that $Y_A=f(X)$ and  the double cover $X\to f(X)$ may be identified with the canonical double cover $X_A\to Y_A$.  Since $X_A$ is a deformation of $K3^{[2]}$ it follows that if~\Ref{cnj}{adaesse} holds then numerical Hilbert squares are deformations of $K3^{[2]}$. The precise result proved in~\cite{og2} is the following.
\begin{thm}{\rm [O'Grady]}\label{thm:mainthm2}
Let $X$ be a  numerical Hilbert square. Suppose that $H$ is an ample divisor class on $X$ such that the following hold:
\begin{itemize}
\item[(1)]
$q_X(H)=2$ (and hence $\dim |H|=5$).
\item[(2)]
 $\cO_X(H)$ is globally generated. 
\item[(3)]
There exist an anti-symplectic involution
 $\phi\colon X\to X$ and an inclusion
 $X/\la\phi\ra\hra |H|^{\vee}$  such that the map $X\to |H|^{\vee}$   is identified with the composition
 \begin{equation}
X\overset{\rho}{\lra}X/\la\phi\ra\hra |H|^{\vee}
\end{equation}
where $\rho$ is the quotient map. 
\end{itemize}
Then there exists $A\in{\mathbb LG}(\wedge^3 \CC^6)^0$ such that $Y_A=Y$  and  the double cover $X\to f(X)$ may be identified with the canonical double cover $X_A\to Y_A$. 
\end{thm}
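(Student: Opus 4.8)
The goal is to recognize the pair $(X,H)$ — a numerical Hilbert square with a globally generated ample $H$ of square $2$ whose associated morphism is a $2:1$ cover of a hypersurface $Y\subset\PP^5=|H|^{\vee}$ ramified by an anti-symplectic involution — as a double EPW-sextic. First I would set up the linear-algebraic datum that any such cover produces. Write $\rho\colon X\to X/\langle\phi\rangle=\overline X\hookrightarrow\PP(U^{\vee})$ with $U:=H^0(\cO_X(H))$, $\dim U=6$. The involution $\phi$ splits $H^0(\cO_X(2H))$ into its $\pm1$-eigenspaces; since $\phi$ is anti-symplectic it acts trivially on $H^{2,0}$-related data and — exactly as in the proof of the claim before this theorem — one computes $\dim H^0(\cO_X(2H))=21=\dim\Sym^2 U$, so $\overline X$ (equivalently its image $Y$) lies on no quadric and $\Sym^2 U\overset{\sim}{\to}H^0(\cO_X(2H))$. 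The $(-1)$-eigenspace of $H^0(\cO_X(2H))$ under $\phi$ is then the "odd" part, and multiplication by the (unique up to scalar) section $s\in H^0(\cO_X(2H))^{-}$ cutting out the ramification surface identifies $\rho_*\cO_X=\cO_{\overline X}\oplus\xi$ for a reflexive rank-one sheaf $\xi$ on $\overline X$, with a multiplication $\xi\otimes\xi\to\cO_{\overline X}$. The first real step is to show $\deg Y=6$: by \eqref{polarizzo} we have $H^4=12$ on $X$, and a $2:1$ cover onto a degree-$d$ hypersurface forces $2d=12$, so $d=6$ and $Y$ is a sextic.

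**Producing the Lagrangian.** The heart of the matter is to run the EPW machinery in reverse: given the sextic $Y\subset\PP(U^{\vee})$ together with the sheaf $\zeta:=\xi(3)$ and its symmetric multiplication, I must exhibit $A\in\lagr$ (with $V$ a fixed $6$-dimensional space, say $V=U^{\vee}$ so that $\PP(V)=|H|^{\vee}$) such that $Y=Y_A$ and the canonical double cover $X_A\to Y_A$ is $(X,\rho)$. The strategy: the quadratic sheaf $(\zeta,m)$ on $\PP(V)$ (meaning $i_*\zeta$ with its symmetric self-duality pairing valued in $\cO(6)$, localized along $Y$) is, by the general structure theory of such sheaves in the EPW paper \cite{epw}, the cokernel of a map $\lambda\colon F\to F^{\vee}$ that is "locally symmetric", and one wants to globalize this as $\lambda_A$ for a Lagrangian $A$. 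Concretely: $\coker(\lambda_A)=i_*\zeta_A$ and the diagram \eqref{spqr} shows that the data of $(Y_A,\zeta_A,m_A)$ determines and is determined by $A$. So I would reconstruct $A$ as follows. Over a transversal open $\cU_B$ the cover is cut out by $M\cdot Z=0$, $Z Z^t=M^c$ for a symmetric matrix $M$ of forms; the condition that $X$ be a double EPW-sextic is exactly that $M$ arises as the $\tau_A^B$ of \eqref{mappagrafo} for some Lagrangian $A$, and the constraint that makes the local symmetric matrices glue to a global object is the Lagrangian condition $\mu^t_{A,B}\circ\lambda_A=\lambda^t_{A,B}\circ\mu_{A,B}$. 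I would verify that the multiplication $m$ coming from the geometry of $X$ satisfies the associativity/symmetry forcing this, using that $\xi\otimes\xi\to\cO_Y$ endows $\cO_Y\oplus\xi$ with a commutative associative algebra structure — precisely the input needed.

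**Checking genericity and closing up.** Once $A$ is produced with $Y=Y_A$ and $X\cong X_A$ compatibly with the covers, it remains to see $A\in\lagr^0=\lagr\setminus\Sigma\setminus\Delta$, i.e. that the sextic is not of the degenerate types. If $A\in\Sigma$ then by \Ref{rmk}{fibcon} the cover $X_A$ is singular (its singular locus maps $2:1$ onto a $\PP(W)\cong\PP^2$), contradicting smoothness of $X$; if $A\in\Delta$ then $X_A$ is again singular along $f_A^{-1}Y_A[3]$. Since our $X$ is a smooth HK four-fold and $f$ identifies it with $X_A$, we must have $A\notin\Sigma\cup\Delta$, hence $A\in\lagr^0$. (Alternatively, one uses that $X$ is a deformation of $K3^{[2]}$ — a hypothesis not available here since that is precisely what the program is trying to prove — so the smoothness argument via \Ref{rmk}{fibcon} is the correct route.) Finally the identification of the polarizations: $f_{A}^*\cO_{Y_A}(1)$ pulls back $\cO_{\PP(V)}(1)$, which under $\PP(V)=|H|^{\vee}$ is $\cO_X(H)$ by construction, so $(X,H)\cong(X_A,f_A^*\cO_{Y_A}(1))$ as polarized varieties.

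**Main obstacle.** The step I expect to be genuinely hard is the reconstruction of the Lagrangian $A$ from $(Y,\zeta,m)$ — i.e. proving that the abstract symmetric quadratic sheaf arising from the anti-symplectic involution on $X$ is \emph{globally} of the form $\coker(\lambda_A)$ for a Lagrangian $A\in\lagr(\wedge^3 V)$, rather than merely locally a cokernel of a symmetric map (which is automatic and is exactly the phenomenon EPW-sextics were invented to illustrate the failure of, in general). This requires pinning down that the "period-type" obstruction to globalizing vanishes in our situation, which should follow from the very special numerics ($\dim U=6$, the sheaf $\zeta$ is a theta-characteristic-like object of the right rank, $Y$ lies on no quadric) together with a careful analysis of the algebra structure on $\cO_Y\oplus\xi$; controlling $\xi$ near the singularities of $Y$ (the surface $Y_A[2]$, generically of degree $40$) and over $Y_A[3]$ is the delicate part, and is where the assumptions (2), (3) of the theorem and the global generation of $\cO_X(H)$ are used in an essential way.
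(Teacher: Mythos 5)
Your overall strategy --- extract the eigensheaf $\eta$ from $f_{*}\cO_X=\cO_Y\oplus\eta$ and reverse-engineer the EPW construction from the symmetric self-duality of the resulting quadratic sheaf --- is the strategy of the paper, and you have correctly located the crux: showing that this quadratic sheaf is \emph{globally} of the form $\coker(\lambda_A)$ for a Lagrangian $A\subset\wedge^3V$. But your proposal stops exactly there. The local picture you offer (symmetric matrices $M$ on transversal opens $\cU_B$, glued via the Lagrangian condition) is not an argument: as you yourself observe, the whole point of~\cite{epw} is that a quadratic sheaf need \emph{not} be a global symmetric cokernel, and you give no mechanism for why the obstruction vanishes here. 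The mechanism the paper actually uses is the following chain: (i) prove that $\zeta:=\eta\otimes\cO_Y(3)$ is globally generated (via very ampleness of $3H$) and compute $h^0(\zeta)=h^0(\cO_X(3H))^{-}=10$ from~\eqref{rrhilbsq}; (ii) show that the kernel $G$ of the evaluation map $H^0(\zeta)\otimes\cO_{|H|^{\vee}}\to i_{*}\zeta$ is locally free and identify it, via Beilinson's spectral sequence, with $\Omega^3_{|H|^{\vee}}(3)$ --- which by the Euler sequence is exactly the bundle $F$ of~\eqref{eccoeffe}; (iii) use the self-duality $\beta\colon i_{*}\zeta\overset{\sim}{\lra}\Ext^1(i_{*}\zeta,\cO_{|H|^{\vee}})$ induced by $\eta\otimes\eta\to\cO_Y$, together with the Eisenbud--Popescu--Walter/Casnati--Catanese structure results, to fit the evaluation sequence into the symmetric diagram~\eqref{grancomm}; and (iv) show that $(\kappa,s^t)$ embeds $\Omega^3_{|H|^{\vee}}(3)$ into $\bigl(H^0(\zeta)\oplus H^0(\zeta)^{\vee}\bigr)\otimes\cO_{|H|^{\vee}}$, identify $H^0(\zeta)\oplus H^0(\zeta)^{\vee}$ with $\wedge^3V$ compatibly with the symplectic forms and with $F$, and take $A$ to be the image of $H^0(\zeta)^{\vee}$, which is Lagrangian by construction. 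None of (i)--(iv) appears in your write-up; step (ii) in particular is the key idea that converts the abstract quadratic sheaf into the specific bundle $F$ whose Lagrangian degeneracy loci are EPW-sextics, and without it there is no candidate for $A$.

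The peripheral parts of your proposal are essentially fine: $\deg Y=6$ from $H^4=12$ via~\eqref{polarizzo}, the exclusion $A\notin\Sigma\cup\Delta$ from smoothness of $X$, and the matching of polarizations. (One small slip: the equality $\dim\Sym^2H^0(\cO_X(H))=21=h^0(\cO_X(2H))$ does not by itself imply that $Y$ lies on no quadric --- the implication runs the other way; but $Y$ is an irreducible hypersurface of degree $6$, so it cannot lie on a quadric in any case.)
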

The proof of the above result goes as follows. 
\vskip 2mm
\n
{\it Step I.\/} Let $Y:=f(X)$; abusing notation we let $f\colon X\to Y$ be the double cover which is identified with the quotient map for the action of $\la\phi\ra$. We have the decomposition $f_{*}\cO_X=\cO_Y\oplus\eta$ where $\eta$ is the $(-1)$-eigensheaf for the action of $\phi$ on $\cO_X$. One proves that $\zeta:=\eta\otimes\cO_Y(3)$ is globally generated - an intermediate step is the proof that $3H$ is very ample. Thus we have an exact sequence
\begin{equation}\label{succdigi}
  0\to G\lra H^0(\zeta)\otimes\cO_{|H|^{\vee}}
  {\lra} i_{*}\zeta\to 0.
\end{equation}
where $i\colon Y\hra |H|^{\vee}$ is inclusion.
\vskip 2mm
\n
{\it Step II.\/} One computes $h^0(\zeta)$ as follows. First $H^0(\zeta)$ is equal to $H^0(\cO_X(3H))^{-}$ i.e.~the space of $\phi$-anti-invariant sections of $\cO_X(3H)$. Using Equation~\eqref{rrhilbsq} one gets that $h^0(\zeta)=10$. A local computation shows that $G$ is locally-free.  By invoking Beilinsons' spectral sequence for vector-bundles on projective spaces one gets that $G\cong \Omega^3_{|H|^{\vee}}(3)$. On the other hand one checks easily (Euler sequence) that the vector-bundle $F$ of~\eqref{eccoeffe} is isomorphic to $\Omega^3_{\PP(V)}(3)$. Hence if we identify $\PP(V)$ with $|H|^{\vee}$ then $F$ is isomorphic to the sheaf $G$ appearing in~\eqref{succdigi}. In other words~\eqref{succdigi} starts looking like the top horizontal sequence of~\eqref{spqr}.  
\vskip 2mm
\n
{\it Step III.\/} The multiplication map $\eta\otimes\eta\to\cO_Y$ defines an isomorphism $\beta\colon
  i_{*}\zeta\overset{\sim}{\lra}
  \text{\it Ext}^1
  (i_{*}\zeta,\cO_{|H|^{\vee}})$. Applying general results of
Eisenbud-Popescu-Walter~\cite{epw} (alternatively see the proof of Claim~(2.1) of~\cite{cascat}) one gets that $\beta$ fits into a 
  commutative diagram
\begin{equation}\label{grancomm}
 \begin{array}{ccccccccc}
0 & \mapor{} & \Omega^3_{|H|^{\vee}}(3)
&\mapor{\kappa}&
H^0(\theta)\otimes\cO_{|H|^{\vee}}  &
\mapor{} & i_{*}\zeta &
\to & 0\\
 & & \mapver{s^{t}}& &\mapver{s} &
&
\mapver{\beta}& & \\
0 & \mapor{} &
H^0(\theta)^{\vee}\otimes\cO_{|H|^{\vee}} &
\mapor{\kappa^{t}}&
\Theta^3_{|H|^{\vee}}(-3) & \mapor{\partial} &
\text{\it Ext}^1(i_{*}\zeta,\cO_{|H|^{\vee}}) & \mapor{}
& 0
\end{array}
\end{equation}
where the second row is obtained from the first one by applying $\text{\it Hom}(\cdot,\cO_{|H|^{\vee}})$. 
\vskip 2mm
\n
{\it Step IV.\/} One checks that
\begin{equation}
 \Omega^3_{|H|^{\vee}}(3)\mapor{(\kappa,s^{t})}
 \left(H^0(\zeta)\oplus
 H^0(\zeta)^{\vee}\right)
 \otimes\cO_{|H|^{\vee}}
\end{equation}
is an injection of vector-bundles. The transpose of the above map induces an isomorphism $\left(H^0(\zeta)^{\vee}\oplus
 H^0(\zeta)\right)\overset{\sim}{\lra} H^0( \Omega^3_{|H|^{\vee}}(3)^{\vee})$. The same argument shows that the transpose of~\eqref{eccoeffe}  induces an isomorphism $\wedge^{3}V^{\vee} \overset{\sim}{\lra} H^0(F^{\vee})$. Since $F$ is isomorphic to $\Omega^3_{|H|^{\vee}}(3)$ we get 
 an isomorphism
$  \rho\colon H^0(\zeta)\oplus
H^0(\zeta)^{\vee}\mapor{\sim} \wedge^3 V$
 such that (abusing notation) $\rho(\Omega^3_{|H|^{\vee}}(3))=F$. Lastly one checks that the standard symplectic form on $(H^0(\zeta)\oplus
H^0(\zeta)^{\vee})$ is identified (up to a multiple) via $\rho$ with the symplectic form $(,)_V$ of~\eqref{maitresse}. Now let $A=\rho(H^0(\zeta)^{\vee})$; then~\eqref{grancomm} is identified with~\eqref{spqr}. This ends the proof of~\Ref{thm}{mainthm2}.
 \section{Global Torelli and deformations of $K3^{[2]}$}\label{toratora}
 \setcounter{equation}{0}
The following question is motivated by the celebrated Global Torelli Theorem for $K3$ surfaces.
\begin{qst}\label{qst:rischiatutto}
 Let $\cC$ be a deformation class of HK manifolds. Is the following true? Let $X,Y$ be HK manifolds whose deformation class is $\cC$: then $X$ is bimeromorphic to $Y$ if and only if  there exists an integral Hodge isometry  $H^2(X)\cong H^2(Y)$.
\end{qst}
If the answer to the above question is affirmative we say that {\it Naive Global Torelli } holds for HK manifolds whose deformation class is $\cC$.  The reason we do not ask for a biregular Global Torelli  is that
bimeromorphic HK   manifolds have isomorphic $H^2$ cohomologies by Item~(5) of~\Ref{rmk}{quix} and 
 in  dimension greater than $2$ there do exist examples of  bimeromorphic HK   manifolds  which  
 are not isomorphic, see for example~\cite{deb} or the domain and codomain of the birational Map~\eqref{noniso}.  Notice also that  bimeromorphic HK manifolds are deformation equivalent  by  Huybrechts'~\Ref{thm}{birdef}. It is known that Naive Global Torelli does not hold for arbitrary deformation classes of HK manifolds.   
Namikawa~\cite{namik} proved that it is false for the deformation class of  $K^{[n]} T$ as soon as $n\ge 2$:  in fact  $K^{[n]} T$ and $K^{[n]} \wh{T}$ have isomorphic $H^2$'s but Namikawa proved that in general they are not bimeromorphic.
Markman~\cite{mark} proved that  if $(n-1)$ is not a prime power\footnote{Here we assume that $n>1$ of course.} then Naive Global Torelli fails for deformations of  $(K3)^{[n]}$. A refined Global Torelli  Question for deformations of $K3^{[n]}$ (based on work of Markman~\cite{mark}) has been formulated by Gritsenko, Hulek and Sankaran~\cite{ghs}; if $(n-1)$ is a prime power the refined and naive questions coincide. The recent preprint~\cite{verbtor} by Verbitsky presents a proof of a result which in particular gives an affirmative answer to~\Ref{qst}{rischiatutto} for deformations of $K3^{[n]}$ and $n-1$ a prime power.  Here we will not discuss Verbitsky's paper, instead we will concentrate on the deformation class of $K3^{[2]}$. In the first subsection we will assume that naive Global Torelli holds 
for deformations of $K3^{[2]}$ and we will derive a few easy (but interesting!) geometric consequences. In the second subsection we will give an outline of our work in progress on moduli and periods of double EPW-sextics. 
 \subsection{Torelli and geometry}\label{torhilbsq}
\setcounter{equation}{0}
In this subsection we make the following
\begin{ass}\label{ass:torass}
Naive Global Torelli holds for deformations of $K3^{[2]}$.
\end{ass}
 The first consequence that we will derive from the above assumption is about moduli spaces of polarized deformations of $K3^{[2]}$. (For a more general discussion see~\cite{ghs}.)
 First we recall a few results on lattices. Let $\Lambda$ be a lattice i.e.~a free finitely generated abelian group equipped with an integral bilinear symmetric form - we denote by $(,)_{\Lambda}$ the bilinear form  and by $q_{\Lambda}$ the associated quadratic form. We recall that
 \begin{equation}
H^2(K3^{[2]};\ZZ)\cong U^3\widehat{\oplus}E_8\la-1\ra^2
 \widehat{\oplus}\la -2\ra=:\Theta
\end{equation}
The {\it divisibility} of  $v\in \Theta$ is  
\begin{equation}\label{visibilita}
div(v):=|\ZZ/\{(v,w)\mid \ w\in \Theta\}|\,.
\end{equation}
Let $v$ be primitive: then $div(v)$ is either $1$ or $2$ and if it equals $2$ then $q_{\Theta}(v)\equiv 6\pmod{8}$. The following result is a corollary of Nikulin's general results on lattices~\cite{nik}.
\begin{clm}
 Let $v,w\in \Theta$ be primitive. There exists an isometry $\phi\in O(\Theta)$ such that $\phi(v)=w$ if and only if
 \begin{equation}
q_\Theta(v)=q_\Theta(w),\qquad div(v)=div(w)
\end{equation}
\end{clm}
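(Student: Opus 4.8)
The plan is to deduce this from Nikulin's criteria for the existence of isometries of even lattices, applied to the specific lattice $\Theta = U^3 \widehat{\oplus} E_8\la-1\ra^2 \widehat{\oplus} \la -2\ra$. The forward direction (if such a $\phi$ exists then the invariants agree) is immediate: an isometry preserves the quadratic form, so $q_\Theta(v) = q_\Theta(\phi(v)) = q_\Theta(w)$, and it preserves divisibility because $\{(\phi(v),u)\mid u\in\Theta\} = \{(\phi(v),\phi(u'))\mid u'\in\Theta\} = \{(v,u')\mid u'\in\Theta\}$. So the content is the reverse implication.

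For the reverse direction, the standard strategy is: first reduce to showing that $v$ (and likewise $w$) lies in a fixed $O(\Theta)$-orbit determined only by $q_\Theta(v)$ and $\mathrm{div}(v)$; it then suffices to produce, for each allowed pair $(d, e)$ of values $(q_\Theta(v), \mathrm{div}(v))$, a single ``standard'' primitive vector $v_0$ with those invariants, and to show every primitive $v$ with $q_\Theta(v)=d$, $\mathrm{div}(v)=e$ is $O(\Theta)$-equivalent to $v_0$. The machinery for this is Nikulin's theorem that an even lattice of signature $(p,q)$ with $p,q\ge 1$ and length $\ell(\Theta) \le \mathrm{rk}(\Theta) - 2$ (the length being the minimal number of generators of the discriminant group) has surjective map $O(\Theta) \to O(q_{A_\Theta})$ onto the automorphisms of the discriminant form, and moreover the genus of $\Theta$ contains a unique class. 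Here $\Theta$ has signature $(3,20)$, rank $23$, discriminant group $\ZZ/2$, so $\ell = 1 \le 21$, and all hypotheses are comfortably satisfied. Concretely I would: (i) classify primitive vectors $v$ by the embedding data of $\ZZ v \hra \Theta$, i.e. by $q_\Theta(v)$ together with the induced map $\ZZ v \to A_\Theta = \ZZ/2$, which is exactly recorded by $\mathrm{div}(v)\in\{1,2\}$ (and the constraint $\mathrm{div}(v)=2 \implies q_\Theta(v)\equiv 6\bmod 8$ comes from compatibility of the discriminant forms of $\ZZ v$ and $v^\perp$); (ii) invoke Nikulin's uniqueness-of-primitive-embeddings result (Theorem 1.14.4 in \cite{nik}, or Corollary 1.5.2) to conclude that the pair $(\ZZ v, v^\perp)$ is determined up to isometry, hence the two orthogonal-complement sublattices $v^\perp$ and $w^\perp$ are isometric and the isometry can be chosen to extend to $\Theta$ carrying $v$ to $w$.

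**The main obstacle** is the careful bookkeeping in step (i)--(ii): one must verify that $v^\perp$ has the right signature and discriminant form so that Nikulin's criteria for uniqueness of the primitive embedding apply, and that there are no parity obstructions beyond the one already noted. In practice, when $\mathrm{div}(v)=1$ the complement $v^\perp$ is an even lattice of signature $(2,20)$ or $(3,19)$ (depending on the sign of $q_\Theta(v)$) with discriminant group $\ZZ/2 \oplus \ZZ/q_\Theta(v)$-flavoured data, and when $\mathrm{div}(v)=2$ it is even of discriminant group $\ZZ/(q_\Theta(v)/2)$; in every case the length stays well below the rank minus two, so uniqueness of embedding holds and the argument closes. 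Since the statement is explicitly flagged as ``a corollary of Nikulin's general results,'' I expect the intended proof to be exactly this two-line reduction — cite \cite{nik}, check the numerical hypotheses on $\Theta$ once, and quote the uniqueness theorem — rather than any bespoke lattice gymnastics.
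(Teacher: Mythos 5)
Your proposal is correct and is exactly the argument the paper intends: the paper offers no proof beyond the citation of Nikulin, and the standard unpacking is precisely your route --- forward direction immediate, reverse direction via the classification of primitive embeddings $\ZZ v\hra\Theta$ by $q_\Theta(v)$ and the induced data in the discriminant group $A_\Theta\cong\ZZ/2$ (which is what $div(v)$ records), followed by Nikulin's uniqueness-of-genus and surjectivity of $O(K)\to O(q_K)$ for the rank-$22$ complement, whose length is far below $\mathrm{rk}-2$. The only loose end is the isotropic case $q_\Theta(v)=0$ (where $v\in v^{\bot}$ and the orthogonal-complement bookkeeping does not literally apply), but there $div(v)$ is forced to be $1$ and a single orbit follows from Eichler's criterion or by completing $v$ to a hyperbolic plane inside the $U^3$ summand; in every application in the paper one has $q_\Theta(v)>0$ anyway.
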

 Let $d$ be a strictly positive integer and $\epsilon\in\{1,2\}$. We let $\gM_{2d}^{\epsilon}$ be the coarse moduli space for deformations $X$ of $K3^{[2]}$ equipped with a primitive ample divisor $H$ such that
 \begin{equation}
q_X(H)=2d,\qquad div(\cO_X(H))=\epsilon.
\end{equation}
(See~\cite{ghs} for details.) The period moduli space for such couples $(X,H)$ is defined as follows. Let $v\in \Theta$ be primitive such that $q_\Theta(v)=2d$ and $div(v)=\epsilon$. Let 
\begin{eqnarray}
\Omega_{v^{\bot}}:= & \{[\sigma]\in\PP(v^{\bot}\otimes_{\ZZ}\CC) \mid q_\Theta(\sigma)=0,\quad
 (\sigma,\ov{\sigma})_\Theta>0\},\\
 O(\Theta)_v:= & \{\phi\in O(\Theta)\mid \phi(v)=v\}. 
\end{eqnarray}
Then $O(\Theta)_v$ acts properly discontinuously on $\Omega_{v^{\bot}}$; thus the quotient $\DD_{2d}^{\epsilon}:=\Omega_{v^{\bot}}/O(\Theta)_v$ is an analytic space, in fact a quasi-projective variety by a classical result of Baily and Borel. One may define a period map
\begin{equation}
\gM_{2d}^{\epsilon}\overset{\gp_{2d}^{\epsilon}}{\lra} \DD_{2d}^{\epsilon}
\end{equation}
 proceeding as in the definition of~\eqref{succofrutta},  with the extra constraint that $\psi(v)=c_1(\cO_X(H))$. 
\begin{clm}\label{clm:sciarra}
If~\Ref{ass}{torass} holds then $\gp_{2d}^{\epsilon}$ is an isomorphism onto an open dense subset. In particular $\gM_{2d}^{\epsilon}$ is irreducible. 
\end{clm}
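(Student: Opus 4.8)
\emph{Proof strategy.} The plan is to show that $\gp_{2d}^{\epsilon}$ is a biholomorphism onto an open dense subset of $\DD_{2d}^{\epsilon}$, and then to read off irreducibility of $\gM_{2d}^{\epsilon}$ from that of $\DD_{2d}^{\epsilon}$. I will organize the argument into four steps: (a) $\gp_{2d}^{\epsilon}$ is well defined and holomorphic; (b) it is a local isomorphism; (c) its image is open and dense; (d) it is injective. Steps (a)--(c) are essentially formal consequences of results recalled above, while step (d) is where~\Ref{ass}{torass} enters and is the delicate point. For (a): given $(X,H)$ in the moduli problem, the class $h:=c_1(\cO_X(H))\in H^2(X;\ZZ)$ is primitive with B-B square $2d$ and divisibility $\epsilon$; since $H^2(X;\ZZ)\cong\Theta$ as lattices, the lattice Claim preceding~\ref{clm:sciarra} furnishes a marking $\psi\colon\Theta\overset{\sim}{\to}H^2(X;\ZZ)$ with $\psi(v)=h$. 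As $h$ is of type $(1,1)$ we have $(h,\sigma)_X=0$ for a generator $\sigma$ of $H^{2,0}(X)$ by~\eqref{tipoqu}, so $\psi^{-1}H^{2,0}(X)\in\Omega_{v^{\bot}}$. Any two such markings differ by an element of $O(\Theta)_v$, and equivalent polarized pairs yield the same point of $\Omega_{v^{\bot}}/O(\Theta)_v$; holomorphy in families follows from Gauss--Manin exactly as for~\eqref{succofrutta}. Hence $\gp_{2d}^{\epsilon}$ is well defined and holomorphic.

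For (b), the deformation space $Def(X,H)$ is a smooth germ of dimension $h^{1,1}(X)-1=20$ (recalled above), and $\Omega_{v^{\bot}}$ also has dimension $20$, since $v^{\bot}$ has signature $(2,20)$; the differential of the local period map is the restriction of the map~\eqref{derper} (injective by infinitesimal Torelli) to the tangent space of $Def(X,H)$, hence is injective, hence an isomorphism by dimension count. So $\gp_{2d}^{\epsilon}$ is a local isomorphism, in particular an open map. For (c), surjectivity of the unpolarized period map (\Ref{thm}{tuttiper}) gives, for every $[\sigma]\in\Omega_{v^{\bot}}$, a marked HK fourfold $(Y,\psi)$ deformation equivalent to $K3^{[2]}$ with $\psi^{-1}H^{2,0}(Y)=[\sigma]$. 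For $[\sigma]$ outside the countable union $\bigcup_{w\in\Theta\setminus\ZZ v}(\Omega_{v^{\bot}}\cap w^{\bot})$ one has $H^{1,1}_{\ZZ}(Y)=\ZZ\,\psi(v)$ by the Lefschetz $(1,1)$ theorem; since $q_Y(\psi(v))=2d>0$, $Y$ is projective by~\Ref{thm}{procri}, and $\Pic(Y)$ being of rank one exactly one of $\pm\psi(v)$ is an ample class. The resulting polarized pair $(Y,H)$ lies in the moduli problem and satisfies $\gp_{2d}^{\epsilon}(Y,H)=[\sigma]$. Thus the image of $\gp_{2d}^{\epsilon}$ contains the image in $\DD_{2d}^{\epsilon}$ of a dense subset of $\Omega_{v^{\bot}}$, so it is dense; combined with openness it is open and dense.

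For (d): suppose $(X,H)$ and $(X',H')$ have the same image. After composing one marking with a suitable element of $O(\Theta)_v$ we obtain an integral isometry $\phi\colon H^2(X;\ZZ)\to H^2(X';\ZZ)$ taking $H^{2,0}(X)$ to $H^{2,0}(X')$ and $h$ to $h'$; being integral and type-preserving on $H^{2,0}$, it is a Hodge isometry. By~\Ref{ass}{torass}, $X$ and $X'$ are bimeromorphic. To promote this to an isomorphism of polarized pairs realizing $\phi$, one argues that $\phi$ carries the K\"ahler cone $\cK_X$ onto $\cK_{X'}$: indeed $\phi$ sends the positive cone $\cC_X$ onto $\cC_{X'}$ (it preserves the component containing $h\mapsto h'$), and being a Hodge isometry of the full lattices it preserves the combinatorial data cutting $\cK_X$ out of $\cC_X$ --- the integral $(1,1)$-classes of B-B square $-2$, resp.\ $-10$ and divisibility $2$, together with their divisibilities in $H^2(\cdot;\ZZ)$ (compare~\Ref{thm}{ampiose} and the characterizations of the K\"ahler cone due to Huybrechts and Boucksom recalled above). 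Hence the bimeromorphism $X\dashrightarrow X'$ may be chosen so that its action on $H^2$ is $\phi$; such a map is an isomorphism in codimension one carrying an ample class to an ample class, hence biregular, and since $\Pic\hookrightarrow H^2$ for HK manifolds it identifies $\cO_{X'}(H')$ with $\cO_X(H)$. Thus $(X,H)\cong(X',H')$ and $\gp_{2d}^{\epsilon}$ is injective.

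Combining (a)--(d), $\gp_{2d}^{\epsilon}$ is a biholomorphism onto an open dense subset $U\subseteq\DD_{2d}^{\epsilon}$. Now $\DD_{2d}^{\epsilon}=\Omega_{v^{\bot}}/O(\Theta)_v$ is a normal quasi-projective variety (Baily--Borel), and it is connected because $O(\Theta)_v$ contains the reflection in some $u\in v^{\bot}$ with $q_\Theta(u)=2$, which interchanges the two connected components of $\Omega_{v^{\bot}}$; hence $\DD_{2d}^{\epsilon}$ is irreducible, and so is its nonempty open subset $U\cong\gM_{2d}^{\epsilon}$. The main obstacle, as I see it, is step (d): passing from the bimeromorphic equivalence produced by Naive Global Torelli to an \emph{isomorphism of polarized pairs inducing the prescribed Hodge isometry}. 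This is the $K3^{[2]}$-type analogue of deducing the polarized (strong) Torelli theorem for $K3$ surfaces from the weak one, and it hinges on a precise understanding of how the K\"ahler cone sits inside the positive cone; in the literature this is obtained through Markman's description of the monodromy group, whereas here one must make do with the cone-theoretic inputs available in the excerpt.
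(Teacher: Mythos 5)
Your steps (a)--(c) match the paper's argument (surjectivity of the unpolarized period map, Huybrechts' projectivity criterion, sign adjustment of the marking), but step (d) has a genuine gap, and it is exactly the point you flag as the main obstacle. From~\Ref{ass}{torass} you obtain \emph{some} bimeromorphic map $X\dashrightarrow X'$; nothing guarantees that it, or any other bimeromorphic map, induces the prescribed Hodge isometry $\phi$. Your attempted repair --- arguing that $\phi$ preserves the wall data cutting $\cK_X$ out of $\cC_X$ and hence that ``the bimeromorphism may be chosen so that its action on $H^2$ is $\phi$'' --- does not close this: even granting that $\phi(\cK_X)=\cK_{X'}$ (which itself rests on knowing which $(-2)$- and $(-10)$-classes are wall-defining, a Hodge-theoretically delicate matter that the paper only states as~\Ref{cnj}{diveff}), the implication ``$\phi$ preserves K\"ahler cones $\Rightarrow$ $\phi$ is induced by an isomorphism'' is precisely the strong Torelli statement one is \emph{not} assuming. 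So injectivity of $\gp_{2d}^{\epsilon}$ on all of $\gM_{2d}^{\epsilon}$ is not established by your argument.

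The paper sidesteps this entirely, and more cheaply, by proving injectivity only over the dense locus $\DD_{2d}^{\epsilon}(1)$ of periods $[\sigma]$ with $\sigma^{\bot}\cap\Theta=\ZZ v$. There $H^{1,1}_{\ZZ}(X)=\ZZ c_1(\cO_X(H))$ and $H^{1,1}_{\ZZ}(X')=\ZZ c_1(\cO_{X'}(H'))$, so \emph{any} bimeromorphic map $\phi\colon X\dashrightarrow X'$ produced by~\Ref{ass}{torass} satisfies $\phi^{*}H'\sim\pm H$; effectivity of $\phi^{*}H'$ (via the codimension-one isomorphism of Item~(5) of~\Ref{rmk}{quix}) rules out $-H$, and a bimeromorphic map between $K$-trivial manifolds pulling back an ample class to an ample class is biregular. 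One need not control which isometry $\phi$ induces on $H^2$. Combined with finiteness of the fibers, openness of the image, and normality of $\DD_{2d}^{\epsilon}$, degree one over a dense subset already yields that $\gp_{2d}^{\epsilon}$ is an isomorphism onto an open dense subset --- which is all the claim asserts. Your steps (a)--(c) and the concluding irreducibility argument survive unchanged; replace step (d) by this restriction to $\DD_{2d}^{\epsilon}(1)$.
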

\begin{proof}
The period map $\gp_{2d}^{\epsilon}$ has finite fibers and it has open image because  the local period map is surjective and $\DD_{2d}^{\epsilon}$ is normal. Thus it suffices to prove the following:
\begin{itemize}
\item[(1)]
$\gM_{2d}^{\epsilon}$ is not empty.
\item[(2)]
$\deg \gp_{2d}^{\epsilon}=1$.
\end{itemize}
Let 
\begin{equation}
\DD_{2d}^{\epsilon}(1):=\{[\sigma]\in \DD_{2d}^{\epsilon}
\mid \sigma^{\bot}\cap \Theta=\ZZ v\}.
\end{equation}
(An element of $\DD_{2d}^{\epsilon}$ is a $ O(\Theta)_v$-orbit in $\Omega_{v^{\bot}}$;   to simplify notation we denote it by a representative $[\sigma]$.)
Since $\DD_{2d}^{\epsilon}(1)$ is dense in $\DD_{2d}^{\epsilon}$ it suffices to prove that
\begin{equation}
| (\gp_{2d}^{\epsilon})^{-1}([\sigma]) | = 1\qquad 
\forall [\sigma]\in \DD_{2d}^{\epsilon}(1).
\end{equation}
Let $[\sigma]\in \DD_{2d}^{\epsilon}(1)$. By~\Ref{thm}{tuttiper} there exist a deformation $X$ of $K3^{[2]}$ and a marking $\psi\colon \Theta\overset{\sim}{\lra} H^2(X;\ZZ)$ such that $\psi([\sigma])=H^{2,0}(X)$. Since $v\bot\sigma$ we have $\psi(v)\in H^{1,1}_{\ZZ}(X)$ and since $q_X(\psi(v))=q_\Theta(v)=2$ we get that $X$ is projective by Huybrechts' projectivity criterion~\ref{procri}. Moreover since $[\sigma]\in \DD_{2d}^{\epsilon}(1)$ we know that $\psi(v)$ generates $H^{1,1}_{\ZZ}(X)$ and hence $\pm\psi(v)$ is ample. Multiplying $\psi$ by $(-1)$ if necessary we may assume that $\psi(v)$ is ample. Let $H$ 
be a divisor class $H$ on $X$ such that $c_1(\cO_X(H))=\psi(v)$; then $\gp_{2d}^{\epsilon}(X,H)=[\sigma]$.  This proves that $(\gp_{2d}^{\epsilon})^{-1}([\sigma])$ is not empty. Let $[(X,H)],[(X',H')]\in \gM_{2d}^{\epsilon}$ be such that 
\begin{equation}
\gp_{2d}^{\epsilon}(X,H)=\gp_{2d}^{\epsilon}(X',H')\in 
\DD_{2d}^{\epsilon}(1).
\end{equation}
Let's prove that $[(X,H)]=[(X',H')]$. By~\Ref{ass}{torass} there exists a birational map $\phi\colon X\dashrightarrow X'$. Since $\gp_{2d}^{\epsilon}(X',H')\in \DD_{2d}^{\epsilon}(1)$ we have $H^{1,1}_{\ZZ}(X)=\ZZ c_1(\cO_X(H))$ and $H^{1,1}_{\ZZ}(X')=\ZZ c_1(\cO_{X'}(H'))$; it follows that $\phi^{*}H'\sim H$ and hence $\phi$ is a regular isomorphism because $H,H'$ are ample and $X,X'$ have trivial canonical bundle. 
\end{proof}
Next we show that~\Ref{ass}{torass} is linked to the conjectural converse of~\Ref{thm}{ampiose}.  
\begin{clm}
Suppose that~\Ref{ass}{torass} holds. 
Then Item~(2) of~\Ref{cnj}{diveff} holds.
\end{clm}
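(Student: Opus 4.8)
\noindent\emph{Proof proposal.} The idea is to deform $X$ along the locus where $c_1(L)$ stays of type $(1,1)$ to a \emph{very general} fourfold for which the statement follows from~\Ref{ass}{torass} together with Example~2 above, and then to descend back to $X$ by upper semicontinuity of cohomology. Write $\ell:=c_1(L)$; since $q_X(\ell)=-2$ the class $\ell$ is primitive, and $\text{div}(\ell)=2$ by hypothesis. Let $\pi\colon\cX\to T$ be a representative of $Def(X,L)$ with $T$ a smooth connected polydisk, $0\in T$, $X_0\cong X$, equipped with a line bundle $\cL$ restricting to $L$ on $X_0$; put $X_t:=\pi^{-1}(t)$, $L_t:=\cL|_{X_t}$, $\ell_t:=c_1(L_t)$, so that $q(\ell_t)=-2$ and $\text{div}(\ell_t)=2$ for all $t$. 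Fix a primitive $v\in\Lambda:=H^2(K3^{[2]};\ZZ)$ with $q_\Lambda(v)=-2$ and $\text{div}(v)=2$, say $v=\xi_2$ (recall $q(\xi_2)=-2$ and $(\xi_2,\Lambda)=2\ZZ$), so that $v^{\bot}\subset\Lambda$ is the $K3$ lattice $U^3\wh{\oplus}E_8\la-1\ra^2$ and $H^2(S^{[2]};\ZZ)=\wt\mu_2 H^2(S;\ZZ)\oplus\ZZ\xi_2$ with $\wt\mu_2$ an integral isometry onto $\xi_2^{\bot}$, see~\eqref{simcom}, \eqref{comhilb}. By Nikulin's classification of primitive vectors (the Claim preceding the definition of $\gM_{2d}^{\epsilon}$) there is a marking $\psi_0\colon\Lambda\overset{\sim}{\lra}H^2(X;\ZZ)$ with $\psi_0(v)=\ell$, and parallel transport over the contractible $T$ yields markings $\psi_t$ with $\psi_t(v)=\ell_t$.

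\noindent\emph{Main step: for very general $t\in T$ one has $h^0(L_t^{2})+h^0(L_t^{-2})\ge 1$.} Outside a countable union of proper analytic subsets of $T$ we have $H^{1,1}_{\ZZ}(X_t)=\ZZ\ell_t$ (for each $w\in\Lambda\setminus\ZZ v$ the locus where $\psi_t(w)$ is of type $(1,1)$ is proper analytic, the period map being a local isomorphism onto an open subset of $\{[\sigma]\mid \sigma\bot v\}$). Fix such a $t$ and set $[\sigma_t]:=\psi_t^{-1}H^{2,0}(X_t)$; since $v^{\bot}$ is the $K3$ lattice and $(\sigma_t,\sigma_t)=0$, $(\sigma_t,\ov{\sigma_t})>0$, the point $[\sigma_t]$ lies in the $K3$ period domain, so by surjectivity of the $K3$ period map (Todorov; a special case of~\Ref{thm}{tuttiper}) there are a $K3$ surface $S$ and a marking $m_S$ of $S$ with period $[\sigma_t]$. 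Extending $\wt\mu_2\circ m_S$ by $v\mapsto\xi_2$ gives a marking $\psi_S$ of $S^{[2]}$ with $\psi_S^{-1}H^{2,0}(S^{[2]})=[\sigma_t]$ and $\psi_S(v)=\xi_2$. Now $X_t$ and $S^{[2]}$ are both deformation equivalent to $K3^{[2]}$, and $\psi_t\circ\psi_S^{-1}\colon H^2(S^{[2]};\ZZ)\overset{\sim}{\lra}H^2(X_t;\ZZ)$ is an integral Hodge isometry (both periods equal $[\sigma_t]$), so~\Ref{ass}{torass} provides a bimeromorphic map $g\colon X_t\dra S^{[2]}$. By Item~(5) of~\Ref{rmk}{quix}, $g$ restricts to an isomorphism $U\overset{\sim}{\lra}V$ between open sets with complements of codimension $\ge 2$ and induces an isometry $g^{*}\colon H^2(S^{[2]};\ZZ)\overset{\sim}{\lra}H^2(X_t;\ZZ)$; transporting $\cO_{S^{[2]}}(\Delta_2)$ across $g$ and extending over the codimension $\ge2$ complement yields a line bundle $M$ on $X_t$ with $c_1(M)=2g^{*}\xi_2$ and $h^0(M)=h^0(\cO_{S^{[2]}}(\Delta_2))\ge 1$ (recall $\cO_{S^{[2]}}(\Delta_2)\cong L_2^{\otimes2}$, $c_1(L_2)=\xi_2$, and $\Delta_2$ is an effective prime divisor, Example~2 above). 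Since $c_1(M)$ is of type $(1,1)$, $g^{*}\xi_2\in H^{1,1}_{\ZZ}(X_t)=\ZZ\ell_t$ has square $-2$, hence $g^{*}\xi_2=\pm\ell_t$ and $M\cong\cO_{X_t}(\pm 2\ell_t)$. Therefore $h^0(L_t^{2})+h^0(L_t^{-2})\ge 1$.

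\noindent\emph{Descent to $X$.} The functions $t\mapsto h^0(L_t^{\pm2})$ are upper semicontinuous, so $T_{\pm}:=\{t\in T\mid h^0(L_t^{\pm2})\ge1\}$ are closed analytic subsets of $T$. By the main step $T_{+}\cup T_{-}$ is a closed subset of the connected space $T$ containing the complement of a countable union of proper analytic subsets of $T$, hence containing a dense subset, hence all of $T$. In particular $0\in T_{+}\cup T_{-}$, i.e.\ $h^0(L^{2})\ge1$ or $h^0(L^{-2})\ge1$, which is precisely Item~(2) of~\Ref{cnj}{diveff}.

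\noindent The main obstacle is conceptual rather than computational: one cannot argue on $X$ directly, since in general $X$ is bimeromorphic to no $S^{[2]}$. One must first deform inside $Def(X,L)$ to a Hodge structure induced from a $K3$ surface, carry the explicit section of $\cO_{S^{[2]}}(\Delta_2)$ across Global Torelli, and only then return to $X$ by semicontinuity — and in that last step it is essential that the ``very general'' locus, though only a dense countable intersection of open sets rather than a dense open set, is still dense in $T$.
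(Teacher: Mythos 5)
Your proposal is correct and follows essentially the same route as the paper: deform inside $Def(X,L)$ to the dense locus where $H^{1,1}_{\ZZ}(X_t)=\ZZ c_1(L_t)$, use that $c_1(L_t)^{\bot}_{\ZZ}$ is the $K3$ lattice (via $q=-2$ and divisibility $2$) together with surjectivity of the $K3$ period map to build an integral Hodge isometry $H^2(S^{[2]})\cong H^2(X_t)$, invoke \Ref{ass}{torass} to get a bimeromorphic map, transport the effective divisor $\Delta_2$, and descend to $t=0$ by upper semicontinuity. Your closing step (writing $T=T_{+}\cup T_{-}$ as a union of two closed analytic sets containing a dense subset) even sidesteps, slightly more cleanly than the paper does, the need to argue that the sign in $h^0(L_t^{\pm 2})>0$ is constant over the very general locus.
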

\begin{proof}
Let $X,L$ be as in  Item~(2) of~\Ref{cnj}{diveff}. Suppose first that
\begin{equation}\label{genpic}
H^{1,1}_{\ZZ}(X)=\ZZ c_1(L).
\end{equation}
The subspace   $c_1(L)^{\bot}\subset H^2(X)$ is a sub Hodge structure because  $c_1(L)\in H^{1,1}_{\ZZ}(X)$.   
Let $c_1(L)^{\bot}_{\ZZ}:= c_1(L)^{\bot}\cap H^2(X;\ZZ)$; then
\begin{equation}\label{brendona}
H^2(X;\ZZ)=c_1(L)^{\bot}_{\ZZ}\oplus \ZZ c_1(L)
\end{equation}
because $q_X(L)=-2$ and $(c_1(L),H^2(X;\ZZ))_X=2\ZZ$. By~\eqref{brendona} the lattice $c_1(L)^{\bot}_{\ZZ}$ is even, unimodular of signature $(3,19)$; it follows that it is isometric to $U^3\widehat{\oplus}E_8\la-1\ra^2$ i.e.~the $K3$ lattice. By surjectivity of the period map for $K3$ surfaces (i.e.~\Ref{thm}{tuttiper}) there exist a $K3$ surface $S$ and an isomorphism of integral Hodge structures $\phi_0\colon H^2(S)\overset{\sim}{\lra} c_1(L)^{\bot}$ which is an isometry. By~\eqref{comhilb} and~\eqref{rethilb} $\phi_0$ extends to an isomorphism of integral Hodge structures 
\begin{equation}
\phi\colon H^2(S^{[2]})\overset{\sim}{\lra} H^2(X)
\end{equation}
 which is an isometry.  By~\Ref{ass}{torass} there exists a bimeromorphic map $f\colon X\dashrightarrow S^{[2]}$. Since $H^{1,1}_\ZZ(X)=\ZZ c_1(L)$ it follows that $f^{*}\xi_2=\pm c_1(L)$. Let $\Delta_2\subset S^{[2]}$ be the effective divisor parametrizing non-reduced analytic subsets; then $f^{-1}_{*}\Delta_2$ is an effective divisor and by~\eqref{doppio} we have  $c_1(\cO_X(f^{-1}_{*}\Delta_2))=c_1(L^{\pm 2})$. This proves that Item~(2) of~\Ref{cnj}{diveff} holds   if we make the extra assumption~\eqref{genpic}. In general  one may proceed as follows. Let $\pi\colon\cX\to T$ be a representative of $Def(X,L)$. We let $X_t:=\pi^{-1}(t)$ and $0\in T$ such that $X_0\cong X$. Of course we have a line-bundle $\cL$ on $\cX$ such that $\cL|_{X_0}\cong L$; we let $L_t:=\cL|_{X_t}$. The set 
 \begin{equation}
T_{gen}:=\{t\in T\mid H^{1,1}_\ZZ(X_t)=\ZZ c_1(\cL_t)\}
\end{equation}
is dense in $T$. By what we have proved one gets that either $h^0(L_t^2)>0$ for all $t\in T_{gen}$ or else $h^0(L_t^{-2})>0$ for all $t\in T_{gen}$. We may assume that the former holds; by upper semi-continuity of cohomology dimension it follows that 
 $h^0(L_t^2)>0$ for all $t\in T$, in particular $h^0(L_0^2)>0$.
\end{proof} 
\begin{rmk}
It should be possible to prove that~\Ref{ass}{torass} implies that Item~(2) of~\Ref{cnj}{diveff} holds and moreover that  the conjectural converse of~\Ref{thm}{ampiose} is true. The proof will be somewhat less elementary because the generic deformation of $K3^{[2]}$ satsifying the hypothesis of Item~(2) of~\Ref{cnj}{diveff} or Item~(b) of~\Ref{thm}{ampiose} is not bimeromorphic to a $K3^{[2]}$. The natural idea is to start from one $4$-fold satisfying the hypothesis and the conclusion and argue by  a  deformation argument and stability of the divisor in the first case and the lagrangian surface in the second case.
\end{rmk}
 \subsection{Double EPW-sextics and Torelli}
\setcounter{equation}{0}
 Let $V$ be a complex vector space of dimension $6$. The action of $PGL(V)$ on $\lagr$ lifts (uniquely) to an action  on the Pl\"ucker line-bundle i.e.~it is    linearized. Thus there is a GIT quotient 
\begin{equation}
 \mathfrak{M}:=\lagr// PGL(V)\,. 
\end{equation}
Given a semistable $A\in\lagr$ we let $[A]\in\gM$ be the corresponding point. Let  $A\in\lagr$ and assume that $Y_A\not=\PP(V)$; we let $H_A\in |f_A^{*}\cO_{\PP(V)}(1)$, thus $(X_A,H_A)$ is a polarized $4$-dimensional scheme, if $X_A$ is smooth i.e.~$A\in\lagr^0$ then it is a HK deformation of $K3^{[2]}$ of degree $2$. We note that if $A$ is semistable then $Y_A\not=\PP(V)$, that is proved in~\cite{ogemma}.
The open dense $\lagr^0\subset\lagr$ (see~\eqref{eccozero}) is contained in the stable locus of $\lagr$ (this follows easily from Proposition~6.1 of~\cite{og2}). Let
\begin{equation}
 \mathfrak{M}^0:=\lagr^0// PGL(V).
\end{equation}
One proves that points of $\gM^0$ are in one-to-one correspondence with 
isomorphism classes of double  EPW-sextics i.e.~$[A]=[B]$ if and only if the polarized HK $4$-folds $(X_A,H_A)$ and $(X_B,H_B)$ are isomorphic.  Let $\DD:=\DD^1_2$, notation as in Subsection~\ref{torhilbsq}. Let $A\in\lagr^0$; then $q(H_A)=2$ and hence the period map for double EPW-sextics is a regular map of quasi-projective varieties
$\gp^0\colon \gM^0\to\DD$. Let $\DD^{BB}\supset\DD$ be the Baily-Borel compactification; then $\gp^0$ extends to a rational map
\begin{equation}
\gp\colon \gM\dashrightarrow\DD^{BB}.
\end{equation}
By~\Ref{thm}{epwdoppie} we know that locally in the classic topology $\lagr^0$  parametrizes a locally versal family of HK deformations of $K3^{[2]}$ of degree $2$; it follows that $\gp$ is dominant of finite degree. The following claim gives one motivation for studying the period map $\gp$.
\begin{clm}\label{clm:natali}
Suppose that
\begin{itemize}
\item[(a)]
\Ref{cnj}{adaesse} holds and
\item[(b)]
$\deg\gp=1$.
\end{itemize}
Then Naive Global Torelli holds for deformations of $K3^{[2]}$. 
\end{clm}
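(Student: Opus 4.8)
The implication ``$X$ bimeromorphic to $Y$ $\Rightarrow$ there exists an integral Hodge isometry $H^2(X)\cong H^2(Y)$'' is Item~(5) of~\Ref{rmk}{quix}, so the content is the converse, which I would prove as follows. Let $X,Y$ be HK manifolds deformation equivalent to $K3^{[2]}$ and let $f\colon H^2(X)\overset{\sim}{\lra} H^2(Y)$ be an integral Hodge isometry; replacing $f$ by $-f$ if necessary we may assume $f$ is orientation preserving on the positive-definite part of $H^2$. Fix a marking $\psi_X\colon\Lambda\overset{\sim}{\lra} H^2(X;\ZZ)$, where $\Lambda$ is the Hilbert-square lattice, and set $\psi_Y:=f\circ\psi_X$; then $(X,\psi_X)$ and $(Y,\psi_Y)$ are points of the moduli space $\cM_{K3^{[2]}}$ of marked deformations of $K3^{[2]}$ with a common period $[\sigma]\in\Omega_\Lambda$. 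Let $\cM^X$, $\cM^Y$ be the connected components containing them. The goal is a bimeromorphic map $X\dra Y$.

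The first part of the plan is to deform $X$ and $Y$, inside $\cM^X$ and $\cM^Y$ respectively, to double EPW-sextics. Fix $\alpha\in\Lambda$ with $q_\Lambda(\alpha)=2$ (so $\alpha$ has divisibility $1$), and choose a period $[\sigma']\in\Omega_\Lambda$ with $(\sigma')^{\bot}\cap\Lambda=\ZZ\alpha$ that is \emph{very general}, in the sense of avoiding the countably many proper analytic conditions of~\Ref{clm}{hodgeprop} for the tautological families over $\cM^X$ and over $\cM^Y$, together with one further condition imposed below. By~\Ref{thm}{tuttiper} the period map is surjective on each of $\cM^X$, $\cM^Y$, so there are marked pairs $(X_1,\psi_1)\in\cM^X$ and $(Y_1,\psi_1')\in\cM^Y$ of period $[\sigma']$; by~\Ref{thm}{procri} they are projective, and after replacing $\psi_1,\psi_1'$ by their negatives if needed the classes $\psi_1(\alpha)$, $\psi_1'(\alpha)$ are ample. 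Item~(4) of~\Ref{clm}{hodgeprop} holds for free here by~\Ref{rmk}{casodefo}, since $X_1,Y_1$ are genuine deformations of $K3^{[2]}$; hence the polarized fourfolds $(X_1,H_1)$, $(Y_1,H_1')$ (polarizations having first Chern classes $\psi_1(\alpha)$, $\psi_1'(\alpha)$) satisfy the hypotheses of~\Ref{thm}{mainthm1}. Hypothesis~(a), i.e.~\Ref{cnj}{adaesse}, excludes alternative~(b) of that theorem, so alternative~(a) holds, and~\Ref{thm}{mainthm2} yields $A_X,A_Y\in\lagr^0$ with $(X_1,H_1)\cong(X_{A_X},H_{A_X})$ and $(Y_1,H_1')\cong(X_{A_Y},H_{A_Y})$.

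Next I would invoke hypothesis~(b). Both $X_{A_X}$ and $X_{A_Y}$ have the same period in $\DD$, namely the image of $[\sigma']$; since $\deg\gp=1$ the dominant map $\gp\colon\gM\dra\DD^{BB}$ is birational, hence the regular map $\gp^0\colon\gM^0\to\DD$ is birational, and imposing on $[\sigma']$ the further condition that its image lie in the dense open locus over which $\gp^0$ is an isomorphism we obtain $[A_X]=[A_Y]$ in $\gM^0$, i.e.\ an isomorphism $X_1\cong Y_1$ of polarized fourfolds. Two consequences. First, $\cM^X$ and $\cM^Y$ each contain a double EPW-sextic. Second, every connected component of $\cM_{K3^{[2]}}$ surjects onto $\Omega_\Lambda$ (\Ref{thm}{tuttiper}) and a point over $[\sigma']$ is projective, hence — by~(a) — a double EPW-sextic; so every component containing a double EPW-sextic contributes at least one such fourfold to the fibre of the period map over $[\sigma']$, while $\deg\gp=1$ forces that fibre to contain a single isomorphism class of double EPW-sextic. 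Therefore there is only one component of $\cM_{K3^{[2]}}$ containing double EPW-sextics, whence $\cM^X=\cM^Y=:\cM^0$, and $(X,\psi_X)$, $(Y,\psi_Y)$ lie in the single component $\cM^0$ with common period $[\sigma]$.

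It remains to deduce bimeromorphy of $X$ and $Y$ from this, and \textbf{this final step is where I expect the real difficulty to lie}. If the period $[\sigma]$ of the given pair happens to be very general, the reasoning of the previous paragraph applies verbatim with $[\sigma]$ in place of $[\sigma']$ and gives $X\cong Y$ outright. In general $[\sigma]$ may be special, and then $\cP^{-1}([\sigma])\cap\cM^0$ can contain several isomorphism classes; one must show they are pairwise bimeromorphic. This is the Hodge-theoretic heart of Global Torelli: I would invoke Huybrechts' results relating the Kähler cones of birational models and the inseparable points of the marked moduli space, and try to reduce the general case to the very general one by a specialization/deformation argument — equivalently, analyzing the decomposition of the positive cone $\cC_X$ into the Kähler cones of the birational models of $X$, in the spirit of the (conjectural) converse of~\Ref{thm}{ampiose} and of Markman's monodromy results. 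That same analysis should also absorb the routine sign bookkeeping left implicit above (the choices of sign for $\psi_1,\psi_1'$, and the initial normalization making $f$ orientation preserving); an alternative organization, which re-runs everything inside $\cM^0$ from the start once $\cM^X=\cM^Y=\cM^0$ is known, does not remove this obstruction, since the irreducibility (up to bimeromorphism) of the fibre of $\cP|_{\cM^0}$ over the special period $[\sigma]$ remains the crux.
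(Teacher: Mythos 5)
Your first part — deforming to a very general period with a degree-$2$ polarization, using \Ref{cnj}{adaesse} to rule out case~(b) of \Ref{thm}{mainthm1}, invoking \Ref{thm}{mainthm2} to realize both fourfolds as double EPW-sextics, and then using $\deg\gp=1$ to identify them — is exactly the paper's strategy for the generic fiber of the period map. But the step you flag as ``where the real difficulty lies'' is a genuine gap, and the paper closes it by a completely concrete mechanism that your sketch does not contain. The paper does \emph{not} argue via Kähler-cone decompositions or monodromy; instead it produces a \emph{sequence} of honest isomorphisms between nearby deformations and takes a limit. Precisely: it proves (\Ref{prp}{joseph}, \Ref{prp}{moltogen}, \Ref{crl}{mazzola}) that the locus $\bigcup_{\alpha\in\cV^{+}}T_{\alpha}^{gen}$ of parameters $t$ where the fiber carries a degree-$2$ polarization, satisfies \Ref{clm}{hodgeprop}, \emph{and has no complex multiplication} is dense in the deformation space $T$ of $Z$. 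At each such $t$ the EPW/period argument gives an isomorphism $f\colon Z'_{g(t)}\to Z_{t}$; the non-CM hypothesis forces $H^2(f)|_{\alpha^{\bot}}=\pm\mu$, and composing with the covering involution of $X_A\to Y_A$ fixes the sign, so that $H^2(f)=\mu$ \emph{exactly}. One then takes $t_i\to 0$ and applies Huybrechts' extension (Theorem~4.3 of~\cite{huy}) of the Burns--Rapoport Main Lemma: a subsequence of the graphs of the $f_i$ converges to the graph of a bimeromorphic map $Z'\dra Z$.

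Two ingredients are therefore missing from your proposal. First, the CM condition: without knowing that the isomorphisms $f_i$ all induce the \emph{same} Hodge isometry $\mu$ on $H^2$, their graphs have no reason to converge, so the limit argument cannot even be started; your sketch never addresses how to control $H^2(f_i)$, and the covering involution of the double EPW-sextic is precisely the tool that makes this possible. Second, the limit mechanism itself: the Burns--Rapoport/Huybrechts degeneration of graphs is the actual engine that transports the conclusion from the dense set of good parameters to the special period $[\sigma]$, and it is a known theorem, not something that needs the (conjectural) converse of \Ref{thm}{ampiose} or Markman's monodromy computations. A smaller remark: your discussion of connected components of the marked moduli space is unnecessary — the paper works directly with the deformation spaces of $Z$ and $Z'$, matched by an isomorphism $g\colon T\to T'$ of their bases via infinitesimal Torelli and local surjectivity of the period map, so the question of which component of $\cM_{X_0}$ each lies in never arises.
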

Before proving the claim we discuss a few density results. Let $X$ be a deformation of $K3^{[2]}$. Let $\pi\colon\cX\to T$ be a representative of $Def(X)$. As usual we let $X_t:=\pi^{-1}(t)$ and $X_0\cong X$. We will assume that $T$ is small enough; that means that $T$ is simply connected and that the local period map~\eqref{mappaperiodi}  is an isomorphism onto an open (classical topology) subset of $V(q_X)\subset\PP(H^2(X))$. Since $T$ is simply connected the Gauss-Manin connection gives an identification
\begin{equation}\label{parallelo}
H^2(X)\overset{\sim}{\lra} H^2(X_t)\qquad \forall t\in T.
\end{equation}
Given $d\in\ZZ$ we let $T_{2d}\subset T$ be defined by
\begin{equation}
T_{2d}:=\{t\in T\mid H^{1,1}_{\ZZ}(X_t)\ni \gamma,\quad q(\gamma)=2d,\quad \text{$\gamma$ primitive}\}.
\end{equation}
The following result is proved by copying the proof of Proposition 2 of Le Potier's paper~\cite{lepotier}.
\begin{prp}\label{prp:joseph}
Keep notation as above. Then $T_{2d}$ is dense (classical topology) in $T$.
\end{prp}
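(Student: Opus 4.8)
\textbf{The plan} is to transport the statement to the period domain and there prove a density statement for a union of Noether--Lefschetz hypersurfaces, essentially copying the argument of~\cite{lepotier}, Proposition~2.

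First I would move $T_{2d}$ to the period domain. By hypothesis the local period map $\pi$ identifies $T$ with an open (classical topology) subset $U$ of the quadric $V(q_X)\subset\PP(H^2(X;\CC))$, and $U$ is contained in
\[
\Omega:=\{[\sigma]\in\PP(H^2(X;\CC))\mid q_X(\sigma)=0,\ (\sigma,\ov\sigma)_X>0\}.
\]
Using the parallel transport isomorphism~\eqref{parallelo} to regard the fixed lattice $H^2(X;\ZZ)$ inside each $H^2(X_t;\ZZ)$, the Lefschetz $(1,1)$ theorem (together with $h^{2,0}(X_t)=1$ and the orthogonality~\eqref{tipoqu}) shows that $\gamma\in H^{1,1}_{\ZZ}(X_t)$ if and only if $(\gamma,\sigma_t)_X=0$, where $[\sigma_t]:=\pi(t)$. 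Hence, putting $\gamma^{\bot}:=\{[\sigma]\mid(\gamma,\sigma)_X=0\}$,
\[
T_{2d}=\pi^{-1}\Bigl(\,\bigcup_{\gamma}(\gamma^{\bot}\cap U)\Bigr),
\]
the union over primitive $\gamma\in H^2(X;\ZZ)$ with $q_X(\gamma)=2d$. Since $\pi$ is a homeomorphism onto the open set $U$, it suffices to prove that $\bigcup_{\gamma}(\gamma^{\bot}\cap\Omega)$ is dense in $\Omega$.

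Next I would reduce this to a real, lattice-free statement. By Item~(3) of~\Ref{rmk}{quix} the form $q_X$ has signature $(3,20)$. For $[\sigma_0]\in\Omega$ the real $2$-plane $P_0:=\langle\mathrm{Re}\,\sigma_0,\mathrm{Im}\,\sigma_0\rangle$ is positive definite, so $P_0^{\bot}$ has signature $(1,20)$ and therefore contains a nonzero real isotropic vector $u$; then $u\bot\sigma_0$, i.e.\ $[\sigma_0]\in u^{\bot}\cap\Omega$. Hence $\Omega=\bigcup_u(u^{\bot}\cap\Omega)$, the union over real isotropic $u$. Moreover $V(q_X)$ is a smooth quadric whose tangent hyperplane at a point $[z]$ is $z^{\bot}$, so $u^{\bot}$ is tangent to $V(q_X)$ only at $[u]$, a real point and thus not in $\Omega$; consequently $u^{\bot}$ meets $V(q_X)$ transversally along $u^{\bot}\cap\Omega$, and by the implicit function theorem a point of $\gamma^{\bot}\cap\Omega$ can be found arbitrarily close to any prescribed point of $u^{\bot}\cap\Omega$ as soon as the direction $[\gamma]\in\PP(H^2(X;\RR))$ is close enough to $[u]$. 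It therefore suffices to prove that the set of directions $\{[\gamma]\mid\gamma\in H^2(X;\ZZ)\ \text{primitive},\ q_X(\gamma)=2d\}$ is dense in the real isotropic cone $\{[v]\in\PP(H^2(X;\RR))\mid q_X(v)=0\}$.

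This last point is the lattice-theoretic core, and it is what is genuinely copied from~\cite{lepotier}. Write $\Lambda=H^2(X;\ZZ)$, an even lattice with $q_\Lambda\cong U^3\widehat{\oplus}E_8\langle-1\rangle^2\widehat{\oplus}\langle-2\rangle$ of signature $(3,20)$. Since $\Lambda$ has hyperbolic summands its form is isotropic over $\ZZ$, and a smooth quadric hypersurface over $\QQ$ with a rational point has its rational points dense among its real points (project from the rational point along lines); so the directions of primitive integral isotropic vectors are dense in the real isotropic cone. It is thus enough to show that for every primitive integral isotropic $e\in\Lambda$ the directions $[\gamma]$ with $q_\Lambda(\gamma)=2d$ accumulate at $[e]$. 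Complete $e$ to a hyperbolic plane $\langle e,f\rangle\cong U$; since $U$ is unimodular one splits $\Lambda=\langle e,f\rangle\widehat{\oplus}M$ with $M$ even of signature $(2,19)$, and picks $g\in M$ with $q_\Lambda(g)>0$. For $k\in\ZZ$ the class
\[
\gamma_k:=\Bigl(d-\tfrac12 k^2 q_\Lambda(g)\Bigr)e+f+k g\ \in\ \Lambda
\]
is primitive (its $f$-coordinate is $1$), satisfies $q_\Lambda(\gamma_k)=2\bigl(d-\tfrac12 k^2 q_\Lambda(g)\bigr)+k^2 q_\Lambda(g)=2d$, and $[\gamma_k]\to[e]$ as $k\to\infty$; this gives the required density, and with it the Proposition. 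I do not expect a serious obstacle: once the explicit family $\gamma_k$ is written down the argument is entirely soft, the only input beyond the standing hypotheses on $T$ (which already include local surjectivity of the period map) being the classical density of rational points on a rational quadric with a rational point — automatic here because $\Lambda$ contains copies of $U$, while for a general HK manifold one would instead invoke $b_2\ge 5$ and Meyer's theorem — together with the routine transversality bookkeeping for the perturbation from $u^{\bot}$ to $\gamma^{\bot}$.
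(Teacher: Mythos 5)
Your proof is correct and is essentially the argument the paper intends: the paper offers no proof of its own beyond the instruction to copy Proposition~2 of Le Potier, and your reconstruction --- translate $T_{2d}$ into the union of the hyperplane sections $\gamma^{\bot}\cap\Omega$, reduce by transversality of $u^{\bot}$ with the period quadric to density of the directions $[\gamma]$ along the real isotropic cone, and obtain that density from rational points on the quadric plus the explicit family $\gamma_k$ --- is exactly that argument. The one step you assert without justification is that a primitive isotropic $e\in\Lambda$ extends to a hyperbolic plane; this requires $div(e)=1$, which does hold for every primitive isotropic vector of $U^3\widehat{\oplus}E_8\la-1\ra^2\widehat{\oplus}\la-2\ra$ (write $e=e_0+cw$ with $e_0$ in the unimodular part $\Lambda_0$ and $w$ the generator of $\la-2\ra$: divisibility $2$ forces $e_0\in 2\Lambda_0$, whence $0=q(e)=4q(e_0/2)-2c^2$ gives $c^2\equiv 0\pmod 4$ and $e$ would be divisible by $2$), but it deserves a line, since in a non-unimodular lattice the completion is not automatic.
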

We are interested in the case $d=1$ and we wish to show that a certain subset of $T_2$ is dense in $T$ as well. First we define complex multiplication HK manifolds. Let $X$ be a HK manifold such that the restriction of $q_X$ to $H^{1,1}_{\ZZ}(X)$ is non-degenerate, for example a projective one; the {\it transcendental lattice of $X$} is the sublattice $T(X)\subset H^2(X;\ZZ)$ perpendicular to $H^{1,1}_{\ZZ}(X)$. Let $T(X)_{\CC}:=T(X)\otimes_{\ZZ}\CC$; then $T(X)_{\CC}$ is a Hodge substructure of $H^2(X)$ and it is simple\footnote{It contains no non-trivial sub-H.S.} because $q_X$  is non-degenerate on $H^{1,1}_{\ZZ}(X)$. We say that {\it $X$ has complex multiplication (CM)} if there exists an endomorphism of the Hodge structure $T(X)_{\CC}$ which is not a homothety. Now let 
\begin{equation}
\cV:=\{\alpha\in H^2(X;\ZZ)\mid q_X(\alpha)=2\}. 
\end{equation}
Given $\alpha\in \cV$ we let 
\begin{equation}
T_{\alpha}:=\{t\in T\mid \alpha\in H^{1,1}_{\ZZ}(X_t)\}.
\end{equation}
The above definition makes sense because Gauss-Manin gives Identification~\eqref{parallelo}. Let $t\in T_{\alpha}$; since $q_{X_t}(\alpha)>0$ either $(\alpha,\cdot)_{X_t}$ is strictly positive or strictly negative on $\cC_{X_t}$, moreover the sign is independent of $t$ by continuity. Thus we have a disjoint union $\cV=\cV^{+}\coprod\cV^{-}$ where  
\begin{eqnarray}
\cV^{+}:= & \{\alpha\in\cV\mid (\alpha,\ \beta)_{X_t}>0\ \ \forall t\in T_{\alpha}
\ \ \forall\beta\in\cC_{X_t}\}\label{burgnich}\\
\cV^{-}:= & \{\alpha\in\cV\mid (\alpha,\ \beta)_{X_t}<0\ \ \forall t\in T_{\alpha}
\ \ \forall\beta\in\cC_{X_t}\}.\label{domenghini}
\end{eqnarray}
\begin{dfn}
Let $\alpha\in\cV^{+}$. We let $T_{\alpha}^{gen}\subset T_{\alpha}$ be the set of $t$ such that $X_t$ is \ul{not} CM (this makes sense: $q_{X_t}$ is non degenerate on $H^{1,1}_{\ZZ}(X_t)$ because $q_{X_t}(\alpha)>0$) and moreover Items~(1) through~(5) of~\Ref{clm}{hodgeprop} hold with $s=t$ and $h_s=\alpha$. 
\end{dfn}
\begin{prp}\label{prp:moltogen}
Keep notation as above. Then $T_{\alpha}^{gen}$ is dense (classical topology) in $T_{\alpha}$. 
\end{prp}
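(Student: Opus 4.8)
The plan is to mimic the strategy that establishes Proposition~\ref{prp:joseph} (i.e.\ Le Potier's density argument, Proposition~2 of~\cite{lepotier}) and then argue that the bad loci we must remove from $T_{\alpha}$ are countable unions of proper analytic subsets. The starting point is the observation that, via the identification~\eqref{parallelo}, the set $T_{\alpha}$ is precisely the locus where $\alpha$ remains of type $(1,1)$; since $q_{X_t}(\alpha)=2>0$, the restriction of the local period map~\eqref{mappaperiodi} to $T_\alpha$ is (an open piece of) the period map for the hyperplane section $\alpha^\perp\cap V(q_X)\subset\PP(\alpha^\perp\otimes\CC)$, which is again a quadric of the appropriate signature. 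In particular $T_\alpha$ is a smooth hypersurface (where nonempty), and its period image contains an open subset of this smaller period domain. So it suffices to show that inside $T_\alpha$ the subset of $t$ violating any one of the six conditions defining $T_\alpha^{gen}$ — non-CM, plus Items~(1)--(5) of~\Ref{clm}{hodgeprop} — is contained in a countable union of proper analytic subsets of $T_\alpha$; the complement of such a set is dense by the Baire category theorem, exactly as in~\Ref{clm}{hodgeprop}.

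First I would handle Items~(1)--(5) of~\Ref{clm}{hodgeprop}. These conditions are, by the very formulation of that claim, conditions that hold for $s$ \emph{very general} in the base $S$ of a representative of $Def(X',H')$. The point is that our $T_\alpha$ plays the role of that base $S$: a representative of $Def(X_t,\alpha)$ for $t\in T_\alpha$ is cut out inside $T$ by the single condition that $\alpha$ stay of type $(1,1)$, which is exactly $T_\alpha$ locally (after shrinking $T$). Hence the proof of~\Ref{clm}{hodgeprop} applies verbatim with $S$ replaced by $T_\alpha$ and shows that Items~(1)--(5) hold outside a countable union of proper analytic subsets of $T_\alpha$. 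The non-CM condition is similar and in fact easier: having a Hodge endomorphism of the transcendental lattice that is not a homothety forces the period point $\cP(t)$ to lie on a proper analytic subvariety of the period domain $\Omega_{\alpha^\perp}$ (the Noether--Lefschetz-type locus where $\mathrm{End}(T(X_t)_{\CC})$ jumps); there are countably many such subvarieties — one for each possible integral endomorphism type — and each pulls back under the locally isomorphic period map to a proper analytic subset of $T_\alpha$. (One should note that CM points themselves are dense, so we really are removing a dense countable union here, but its complement remains dense by Baire.)

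The main obstacle, I expect, will be the bookkeeping needed to justify that $T_\alpha$ is a legitimate base for a family to which~\Ref{clm}{hodgeprop} applies, rather than the density statement itself: one must check that after shrinking $T$ around a chosen point $t_0\in T_\alpha$, the inclusion $T_\alpha\hookrightarrow T$ is a representative of $Def(X_{t_0},\mathcal{L}_{t_0}^{\otimes m})$ for a suitable polarization — i.e.\ that $\alpha$ (or a multiple) is realized by an \emph{ample} line bundle on $X_{t_0}$, which we already arranged since $\alpha\in\cV^{+}$ means $(\alpha,\cdot)_{X_{t_0}}>0$ on $\cC_{X_{t_0}}$ and, at a point where $H^{1,1}_\ZZ$ is generated by $\alpha$, Huybrechts' projectivity criterion~\Ref{thm}{procri} together with the K\"ahler-cone description of~\Ref{thm}{kalpos} gives ampleness. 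Once the base is correctly set up, the rest is an application of the Baire category argument already used twice in the paper, so no genuinely new difficulty arises. I would therefore conclude: $T_\alpha^{gen}=T_\alpha\setminus(\bigcup_i Z_i)$ with each $Z_i\subsetneq T_\alpha$ a proper analytic subset, hence $T_\alpha^{gen}$ is dense in $T_\alpha$ in the classical topology, which is the assertion of the proposition.
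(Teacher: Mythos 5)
Your proposal is correct and follows essentially the same route as the paper: the paper likewise removes, one batch at a time, the countably many proper analytic subsets of $T_\alpha$ where the Picard rank jumps, where the sub-Hodge-structure condition (Item~(3) of Claim~\ref{clm:hodgeprop}) fails, and where an integral non-homothety endomorphism of $\alpha^\perp$ preserves $H^{2,0}(X_t)$ (the CM locus, handled by exactly your eigenspace observation), then concludes by the Baire-type argument. The only cosmetic difference is that the paper does not re-run the proof of Claim~\ref{clm:hodgeprop} wholesale but deduces Items~(2), (4), (5) from (1), (3) and Remark~\ref{rmk:casodefo} (indivisibility of $\alpha^2$ is automatic for deformations of $K3^{[2]}$), and it does not need the Le Potier density argument you cite at the outset, since density of $T_\alpha^{gen}$ \emph{inside} $T_\alpha$ is purely a Baire statement.
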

\begin{proof}
Let $T_{\alpha}(1)\subset T_{\alpha}$ be the set of $t$ such that $H^{1,1}_{\ZZ}(X_t)=\ZZ\alpha$. A standard argument gives that $T_{\alpha}(1)$ is the complement of a countable union of proper analytic subsets of $T_{\alpha}$. Let $T_{\alpha}(2)\subset T_{\alpha}(1)$ be the set of $t$ such that Item~(3) of~\Ref{clm}{hodgeprop} holds with $X_s=X_t$ and $h_s=\alpha$. Again by a standard argument  $T_{\alpha}(2)$ is the complement of a countable union of proper analytic subsets of $T_{\alpha}$ - see Lemma~3.3 of~\cite{og3}. Let $t\in T_{\alpha}(2)$; then  Items~(1) through~(5) of~\Ref{clm}{hodgeprop} hold with $s=t$ and $h_s=\alpha$. In fact~(1) and~(3) hold by definition, (4) holds by~\Ref{rmk}{casodefo}; Item~(2) follows from Item~(1) and Item~(5) follows from (3) and~(4) - see the proof of Proposition~3.2 of~\cite{og3}. Let $T_{\alpha}^{CM}\subset T_{\alpha}$   be the set of $t$ such that $X_t$ has complex multiplication and $T_{\alpha}^{CM}(1):=T_{\alpha}^{CM}\cap T_{\alpha}(1)$. We claim that 
\begin{equation}\label{loreti}
\text{$T_{\alpha}^{CM}$  is contained in a countable union of proper analytic subsets of $T_{\alpha}$.}
\end{equation}
 Since 
$(T_{\alpha}\setminus T_{\alpha}(1))$ is a countable union of proper analytic subsets of $T_{\alpha}$ it suffices to prove that $T_{\alpha}^{CM}(1)$ is contained in a countable union of proper analytic subsets of $T_{\alpha}$. Let $t\in T_{\alpha}^{CM}(1)$. Then $H^{1,1}(X_t)=\alpha^{\bot}$ and hence there exists an \ul{integral} homomorphism of groups $\phi\colon \alpha^{\bot}\to \alpha^{\bot}$ which is not a homothety and such that $H^{2,0}(X_t)$ is an eigenspace of $\phi$, say with eigenvalue $\lambda$. Since $\phi$ is not a homothety the $\lambda$-eigenspace $V_{\lambda}\subset \alpha^{\bot}$ is not all of $\alpha^{\bot}$; it follows that
\begin{equation}
\{t\in T_{\alpha}\mid H^{2,0}(X_t)\subset V_{\lambda}\}
\end{equation}
is a proper analytic subset of $T_{\alpha}$. The set of integral $\phi$ as above is countable; it follows that $T_{\alpha}^{CM}(1)$ is contained in a countable union of proper analytic subsets of $T_{\alpha}$; this proves~\eqref{loreti}. Since
$T_{\alpha}^{gen}=T_{\alpha}(2)\setminus T_{\alpha}^{CM}$ we get that the complement of $T_{\alpha}^{gen}$ in $T_{\alpha}$ is contained  in a countable union of proper analytic subsets of $T_{\alpha}$, in particular $T_{\alpha}^{gen}$ is dense in $T_{\alpha}$.
\end{proof}
\begin{crl}\label{crl:mazzola}
Keep notation as above. Then 
\begin{equation}\label{tuttiquanti}
\bigcup_{\alpha\in\cV^{+}} T_{\alpha}^{gen}
\end{equation}
 is dense in $T$.
\end{crl}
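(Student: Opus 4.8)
The plan is to obtain \Ref{crl}{mazzola} as a formal consequence of \Ref{prp}{joseph} and \Ref{prp}{moltogen}, together with two elementary remarks about the set $\cV$. First I would record that every $\alpha\in H^2(X;\ZZ)$ with $q_X(\alpha)=2$ is automatically primitive: if $\alpha=k\beta$ then $k^2$ divides $q_X(\alpha)=2$, forcing $k=\pm 1$. Second, I would note that $\alpha\mapsto-\alpha$ interchanges $\cV^{+}$ and $\cV^{-}$ and satisfies $T_{-\alpha}=T_{\alpha}$, since the condition ``$\alpha\in H^{1,1}_{\ZZ}(X_t)$'' does not see the sign of $\alpha$; consequently $\bigcup_{\alpha\in\cV}T_{\alpha}=\bigcup_{\alpha\in\cV^{+}}T_{\alpha}$.

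The steps would then be: combining the first remark with the definition of the set $T_2$ appearing in \Ref{prp}{joseph} (that proposition with $d=1$), one has $T_2=\bigcup_{\alpha\in\cV}T_{\alpha}$, and by the second remark this equals $\bigcup_{\alpha\in\cV^{+}}T_{\alpha}$; hence \Ref{prp}{joseph} already tells us that $\bigcup_{\alpha\in\cV^{+}}T_{\alpha}$ is dense in $T$. To finish, I would fix an arbitrary nonempty open $U\subset T$ in the classical topology. By the density just noted, $U\cap T_{\alpha}\not=\es$ for some $\alpha\in\cV^{+}$; this $U\cap T_{\alpha}$ is a nonempty open subset of $T_{\alpha}$, and \Ref{prp}{moltogen} says $T_{\alpha}^{gen}$ is dense in $T_{\alpha}$, so $(U\cap T_{\alpha})\cap T_{\alpha}^{gen}\not=\es$. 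A fortiori $U$ meets $\bigcup_{\alpha\in\cV^{+}}T_{\alpha}^{gen}$, and since $U$ was arbitrary this shows that \eqref{tuttiquanti} is dense in $T$.

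I do not expect a genuine obstacle at this point, because all the substance sits in the two propositions already established: \Ref{prp}{joseph}, the HK analogue of Le Potier's density statement, which manufactures integral $(1,1)$-classes of prescribed positive square on a dense subset of the base; and \Ref{prp}{moltogen}, which deletes from each $T_{\alpha}$ the complex-multiplication locus and the loci where any of Items~(1)--(5) of \Ref{clm}{hodgeprop} fail, each of these being a countable union of proper analytic subsets of $T_{\alpha}$. The one point I would be careful to state correctly is that density is being used in two different ambient spaces --- in $T$ for \Ref{prp}{joseph}, in $T_{\alpha}$ for \Ref{prp}{moltogen} --- so the argument routes through a nonempty open subset of $T_{\alpha}$ rather than attempting to combine the two density assertions directly inside $T$; this costs nothing, as it uses only the definition of a dense subset and the fact that $U\cap T_{\alpha}$ is open in $T_{\alpha}$.
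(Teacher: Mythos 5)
Your argument is correct and is essentially the paper's own proof: both deduce the corollary by combining \Ref{prp}{joseph} (density of $T_2=\bigcup_{\alpha\in\cV^{+}}T_{\alpha}$ in $T$) with \Ref{prp}{moltogen} (density of $T_{\alpha}^{gen}$ in each $T_{\alpha}$), the paper phrasing the second step via closures where you use an arbitrary open set. Your preliminary remarks on primitivity and on the sign $\alpha\mapsto-\alpha$ merely make explicit what the paper leaves implicit in the identity $T_2=\bigcup_{\alpha\in\cV^{+}}T_{\alpha}$.
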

\begin{proof}
We have $T_2=\cup_{\alpha\in\cV^{+}} T_{\alpha}$. By~\Ref{prp}{moltogen} we get that the closure of~\eqref{tuttiquanti} equals the closure of $T_2$. Thus the Corollary follows from~\Ref{prp}{joseph}. 
\end{proof}
\n
{\it Proof of~\Ref{clm}{natali}.\/}
Suppose that $Z,Z'$ are HK deformations of $K3^{[2]}$ and that there exists an integral isomorphism of Hodge structures
\begin{equation}\label{memehodge}
\mu\colon H^2(Z)\overset{\sim}{\lra} H^2(Z')
\end{equation}
which is  an isometry with respect to the B-B forms. Composing $\mu$ with $-Id_{H^2(Z')}$ we may assume that
\begin{equation}\label{jair}
\mu(\cC_{Z})=\cC_{Z'}.
\end{equation}
By the existence of $\mu$ we may choose markings $\psi,\psi'$ of $Z$ and $Z'$ repectively such that $\cP(Z,\psi)=\cP(Z',\psi')$. Let $\pi\colon\cZ\to T$ and $\pi'\colon\cZ'\to T'$ be representatives of $Def(Z)$ and $Def(Z')$ with $T,T'$ small. (As usual $Z_t=\pi^{-1}(t)$, $Z_0\cong Z$ and $Z'_t=(\pi')^{-1}(t)$, $Z'_{0}\cong Z'$.) By infinitesimal Torelli and local surjectivity of the period map we may shrink $T$ and $T'$ so that there exists an isomorphism $g\colon T\to T'$ such that 
\begin{equation}\label{identici}
\cP(Z_t,\psi)=\cP(Z'_{g(t)},\psi').
\end{equation}
(Here $\psi$ defines a marking of $Z_t$ by Gauss-Manin and similarly for $\psi'$.) Let $\cV_Z^{+}\subset H^2(Z;\ZZ)$ and $\cV_{Z'}^{+}\subset H^2(Z';\ZZ)$ be defined as in~\eqref{burgnich}. By~\eqref{jair} we have $\mu(\cV_Z^{+})=\cV_{Z'}^{+}$. Let $\alpha\in  \cV_Z^{+}$; by~\eqref{identici} we have $g(T_{\alpha}^{gen})=T^{gen}_{\mu(\alpha)}$. Let $t\in T_{\alpha}^{gen}$. By~\Ref{cnj}{adaesse} and~\Ref{thm}{mainthm2}   there exist $A,A'\in\lagr^0$ such that $Z_t$, $Z'_{g(t)}$ are isomorphic to the double EPW-sextics $X_A$, $X_{A'}$ respectively. Moreover $\alpha$ and $\alpha'$ are the natural ample classes on $X_A$ and $X_{A'}$ respectively because they belong to $\cV_Z^{+}$ and $\cV_{Z'}^{+}$ respectively. We have 
$\gp([A])=\gp([A'])$ by~\eqref{identici};  since we are assuming that $\deg\gp=1$ it follows that $X_A\cong X_{A'}$. Let $f\colon X_{A'}\overset{\sim}{\lra} X_A$ be an isomorphism. The integral isomorphism of Hodge structures $H^2(f)\colon H^2(X_A)\overset{\sim}{\lra}  H^2(X_{A'})$ is an isometry with respect to the B-B forms; it sends $\alpha$ to $\alpha'$ and hence $\alpha^{\bot}$ to $(\alpha')^{\bot}$. Since $X_A,X_{A'}$ do \ul{not} have complex multiplication the restriction of $H^2(f)$ to  $\alpha^{\bot}$ is either equal to the restriction of $\mu$ or to the restriction of $-\mu$. If the latter occurs we replace $f$ with its composition with the covering involution of $X_A\to Y_A$ and we get that we may assume that $H^2(f)=\mu$.   
By~\Ref{crl}{mazzola} there exists a sequence $\{t_i\}$ converging to $0$ with  $t_i\in \cup_{\alpha\in\cV^{+}} T_{\alpha}^{gen}$ for all $i$.  For each $t_i$ we have an isomorphism $f_i\colon Z'_{g(t_i)}\overset{\sim}{\lra} Z_{t_i}$ such that $H^2(f_i)=\mu$; under these hypotheses Huybrechts (Theorem~4.3 of~\cite{huy}) proved that the \lq\lq Main Lemma\rq\rq of Burns-Rapoport~\cite{bura} extends to higher-dimensional HK's i.e.~a subsequence of the graphs of the $f_i$ converges to the graph of a  bimeromorphic map $Z'\dashrightarrow Z$.  
\qed
 \subsection{Periods of double EPW-sextics}
\setcounter{equation}{0}
The period maps for double EPW-sextics and for cubic hypesurfaces in $\PP^5$ have many common features. We will state the main results on periods of double EPW-sextics and then we will point out the analogies with the case of cubic $4$-folds. Let $\Delta,\Sigma\subset\lagr$ be defined by~\eqref{eccodel}, \eqref{eccosig}. One shows that $(\Delta\setminus\Sigma)$ is contained in the stable locus and that  the generic point of $\Sigma$ is stable; it follows that
\begin{equation}
\gT:=\Delta//PGL(V),\qquad \gN:=\Sigma//PGL(V)
\end{equation}
are prime divisors in $\gM$. The period map $\gp$ is not regular; one of the main issues is to determine the locus of regular points of $\gp$.  
One first proves that $\gp$ is regular away from $\gN$. In order to analyze $\gp$ at a point $x\in\gN$ we assume that $A$ belongs to the unique closed orbit\footnote{That is closed in the semistable locus $\lagr^{ss}$.} representing $x$. Suppose that $W\in \GG r(3,V)$ and that $\wedge^3 W\subset A$. One defines a subscheme $C_{W,A}\subset \PP(W)$ as in Item~(1) of~\Ref{rmk}{fibcon}; it is either a sextic curve or all of $\PP(W)$ (pathological case). We let $\gM^{\flat}\subset\gM$ be the locus of $[A]$  such that the following holds:  for all $W\in \GG r(3,V)$ such that $\wedge^3 W\subset A$ the scheme $C_{W,A}$ is a $PGL(W)$-semistable sextic which does not contain a triple conic in the closure of its orbit\footnote{The GIT-quotient of the space of plane sextics is a compactification of the moduli space of degree-$2$ $K3$ surfaces; the point corresponding to triple conics is the indeterminacy locus of the period map to the Baily-Borel compactification of the relevant period moduli space.}. The map $\gp$ extends regularly over $\gM^{\flat}$:
\begin{equation}
\begin{matrix}
\gM^{\flat} & \lra & \DD^{BB} \\
[A] & \mapsto & \gp([A]) 
\end{matrix}
\end{equation}
(In fact we guess that $\gM^{\flat}$ is equal to the set of regular points of $\gp$.)
Let $\gM^{ADE}\subset\gM^{\flat}$ be the locus of $[A]$  such that the following holds:  for all (or equivalently one) $W\in \GG r(3,V)$ such that $\wedge^3 W\subset A$ the scheme $C_{W,A}$  is a reduced sextic with ADE singularities i.e.~the double cover $S\to\PP(W)$ ramified over $C_{W,A}$ has at most DuVal singularities. One has $\gM^{ADE}=\gp^{-1}\DD\cap \gM^{\flat}$ and hence we have
\begin{equation}\label{platonico}
\begin{matrix}
\gM^{ADE} & \lra & \DD \\
[A] & \mapsto & \gp([A]) 
\end{matrix}
\end{equation}
Moreover Map~\eqref{platonico} has finite fibers. Next we analyze the restriction of $\gp$ to $\gT^{ADE}:=\gT\cap\gM^{ADE}$ and to $\gN^{ADE}:=\gN\cap\gM^{ADE}$. 
The double EPW-sextic parametrized by  $[A]\in (\gT\setminus\gN)$ is birational to $S_A(v_i)^{[2]}$ where $S_A(v_i)$ is the $K3$ surface described in the proof of~\Ref{thm}{epwdoppie}. Similarly let $[A]\in\gN$ be generic as in~\Ref{rmk}{fibcon}; then  the double cover $S_{W,A}\to\PP(W)$ ramified over   the smooth  sextic $C_{W,A}$  is a $K3$ surface. It is not the case that $X_A$ is birational to a Hilbert square but  $ e_A^{\bot}\subset H^2(X_A)$ is a sub-Hodge structure of $H^2(S_{W,A})$ of index  $2$. (Here $e_A$ is as in Item~(5) of~\Ref{rmk}{fibcon}.)  In both cases Global Torelli for $K3$'s and Riemann-Roch for $K3$ surfaces allow us to analyze the restriction of $\gp$ to $\gT^{ADE}$ and to $\gN^{ADE}$. The closure of $\gp(\gN^{ADE})$ in $\DD$  is an irreducible component ${\mathbb S}_2^{\star}$ of the 
divisor
\begin{equation}
\{[\sigma]\in\DD\mid \text{$\exists \gamma\in \{v,\sigma\}^{\bot}\cap\Theta$ such that $q_{\Theta}(\gamma)=-2$}\}.
\end{equation}
(Here $v\in\Theta$ is a fixed vector such that $q_{\Theta}(v)=2$ - see Subsection~\ref{torhilbsq}.)
Moreover the following hold:
\begin{itemize}
\item[(a)]
The restriction of $\gp$ to $\gN^{ADE}$ is injective.
\item[(b)]
$\gp$ is not ramified along $\gN$.
\item[(c)]
$\gp^{-1}({\mathbb S}_2^{\star})\cap\gM^{\flat}=\gN^{ADE}$.
\end{itemize}
Similar results hold for the period map on $\gT$. Let's pretend for a moment that $\gp$ is regular; then Items~(a)-(c) give  that $\deg\gp=1$ because $(\gM\setminus\gM^{\flat})$ contains no divisor - in fact it has relatively high codimension. Going back to the \lq\lq real\rq\rq world (i.e.~$\gp$ is not regular): if the dimension of $(\gM\setminus\gM^{\flat})$ is at most $6$ then one may adapt an argument of Voisin~\cite{claire} (see the Erratum) and derive $\deg\gp=1$ from~(a) through~(c) above. We do not yet know whether the required upper bound holds - what is missing is a complete (or detailed enough) analysis of GIT (semi)stability for the $PGL(V)$ action on $\lagr$. Now we go over the analysis of the period map for cubic hypersurfaces in $\PP^5$ according to Voisin~\cite{claire} and Laza~\cite{laza1,laza2} (see also~\cite{looicomp}). Let $|\cO_{\PP^5}(3)|^{spl}\subset 
|\cO_{\PP^5}(3)|$ be the open set parametrizing cubics with simple singularities. Then $|\cO_{\PP^5}(3)|^{spl}$ is $PGL(6)$-invariant  and by Laza~\cite{laza1} it is contained in the stable locus of $|\cO_{\PP^5}(3)|$. Let
\begin{equation}
\cM^{spl}:=|\cO_{\PP^5}(3)|^{spl}//PGL(6),\qquad
\cM_{cbc}:=|\cO_{\PP^5}(3)|//PGL(6).
\end{equation}
We have the  (rational)  period map is $\gp\colon \cM_{cbc}\dashrightarrow (\DD_6^2)^{BB}$ where $(\DD_6^2)^{BB}$ is the Baily-Borel compactification of the period moduli space described in Subsection~\ref{torhilbsq}. Then $\cM^{spl}$ is the analogue of the open $\cM^{ADE}$ in the moduli space of double EPW-sextics. In fact $\cM^{spl}=\gp^{-1}(\DD_6^2)\cap Reg(\gp)$ and moreover the restriction of $\gp$ to $\cM^{spl}$ has   finite fibers (of cardinality $1$ by Voisin's Global Torelli for cubics). Next let $\cD,\cP$ be the prime divisors of $|\cO_{\PP^5}(3)|$ defined in Subsection~\ref{bedofam}: as shown in that subsection the varieties of lines on cubics parametrized by points of $\cD$ are similar to double EPW-sextics parametrized by points of $\Delta$ and there is also an analogy between $\cP$ and $\Sigma$. Voisin~\cite{claire} proved that analogues of  Items~(a)-(c) above hold for $\cP// PGL(6)$ and from that derived Global Torelli for cubics. 
\end{document}